\documentclass{article}
\usepackage[english]{babel}
\usepackage{tikz}
\usepackage{physics}
\usepackage[letterpaper,top=2cm,bottom=2cm,left=3cm,right=3cm,marginparwidth=1.75cm]{geometry}

\usepackage{amsmath, amsfonts, amsthm}
\usepackage{graphicx}
\usepackage[colorlinks=true, allcolors=blue]{hyperref}
\usepackage{mathtools} 
\allowdisplaybreaks 
\usepackage{comment} 
\usepackage[capitalize]{cleveref}
\usepackage{bbm}
\usepackage{float}
\usepackage{booktabs}
\usepackage{nicematrix}
\usepackage{tikz}
\usepackage{subcaption}
\usepackage{xcolor}
\usepackage{tikz-cd}
\usepackage[ruled]{algorithm2e}
\usepackage[style=alphabetic,maxnames=99,
    maxbibnames=99]{biblatex}
\usepackage{titlesec}
\usepackage{ytableau}
\usepackage{authblk}
\usetikzlibrary{positioning,calc,arrows.meta,shapes.geometric}

\definecolor{tnink}{HTML}{0B2D4D}
\definecolor{tnlightblue}{HTML}{A6D8F5}
\definecolor{tnblue}{HTML}{0072B2}
\definecolor{tngrey}{HTML}{808080}
\definecolor{tnlightgrey}{HTML}{D9D9D9}
\definecolor{tngreen}{HTML}{009E73}
\definecolor{tnorange}{HTML}{E69F00}
\definecolor{tnred}{HTML}{D55E00}
\definecolor{tnpurple}{HTML}{CC79A7}
\definecolor{tncyan}{HTML}{56B4E9}

\tikzset{
  leg/.style={draw=tnink, line width=1.2pt, line cap=round, line join=round},
  tensor/.style={
  rectangle,
  draw=tnink,
  line width=1.1pt,
  minimum size=8mm,
  inner sep=0pt,
  text width=8mm,
  align=center,
  fill=#1,
  text=tnink
  },
  tensor/.default=tnblue,
  op/.style={rectangle, rounded corners=1.2mm, draw=tnink, line width=1.1pt, minimum width=9mm, minimum height=7mm, inner sep=1pt, fill=#1, text=white},
  op/.default=tncyan,
  iso/.style={regular polygon, regular polygon sides=3, draw=tnink, line width=1.1pt, minimum size=10mm, inner sep=0pt, fill=#1, text=white},
  iso/.default=tnorange,
  lab/.style={font=\small, text=black},
}

\newtheorem{theorem}{Theorem}
\newtheorem{lemma}[theorem]{Lemma}
\newtheorem{corollary}[theorem]{Corollary}
\newtheorem{proposition}[theorem]{Proposition}

\theoremstyle{definition}

\newtheorem{remark}[theorem]{Remark}

\newtheorem{example}[theorem]{Example}

\DeclareMathOperator{\Gr}{Gr}
\DeclareMathOperator{\Herm}{Herm}
\DeclareMathOperator{\Wg}{Wg}
\DeclareMathOperator{\U}{U}

\newcommand{\R}{\mathbb{R}}

\newcommand{\N}{\mathbb{N}}

\newcommand{\C}{\mathbb{C}}
\newcommand{\vol}{\mathrm{vol}}

\newcommand{\St}{\mathrm{St}}
\renewcommand{\P}{\mathbb{P}}
\newcommand{\id}{\mathrm{id}}
\newcommand{\NJ}{\mathrm{NJ}}

\titleformat{\subsection}[runin]
  {\normalfont\normalsize\bfseries}
  {\thesubsection}
  {1em}
  {}[.]

\titlespacing*{\subsection}
  {0pt}{\baselineskip}{1em}

\title{\bf Degree of tensor train varieties \\ via integral geometry}

\author{
Andrea Rosana%
\thanks{
Max Planck Institute for Mathematics in the Sciences, Leipzig, Germany.
Email: \texttt{andrea.rosana@mis.mpg.de}.
}
\qquad
Otto T.P. Schmidt%
\thanks{
Max Planck Institute for Mathematics in the Sciences, Leipzig, Germany and
INO-CNR Pitaevskii BEC Center and Dipartimento di Fisica, Universit\'a di Trento, Trento, Italy.
Email: \texttt{otto.schmidt@mis.mpg.de}.
}
}

\begin{document}
\maketitle

\begin{abstract}
In this work we consider tensor train varieties. These are varieties of tensors arising in a range of fields, including quantum many-body physics and machine learning. Using methods from integral geometry, we obtain a combinatorial expression for their degrees. We provide the ready-to-use \texttt{julia} package \texttt{TTVarietyDegree.jl}. 

\end{abstract}

\tableofcontents

\section{Introduction}

Tensor decompositions provide algebraic models for high-dimensional data with hidden
low-dimensional structure. Next to (hierarchical) Tucker decomposition \cite{kolda_2009, cheng_2019}, the tensor train (TT) decomposition is one of
the most widely used formats \cite{Oseledets_2011}. Consider a projective tensor $[T]\in
\P\!\left(
\C^{d_1}\otimes\cdots\otimes \C^{d_N}
\right)$ 
and fix a vector of positive integers
\[
D=(D_0,D_1,\dots,D_{N-1},D_N) \in \N^{N+1},
\qquad
D_0=D_N=1.
\]
We refer to $d=(d_1,\dots,d_N)\in \N^N$ as the \emph{local dimensions}, while the integers $D=(D_1,\dots,D_{N-1})$ are called the tensor train ranks or \emph{bond dimensions}. 
\\
A \emph{tensor train (TT) variety} can be defined via rank restrictions on flattenings. The $i$-th flattening of a tensor $T \in \C^{d_1}\otimes\dots\otimes \C^{d_N}$ is the representation of $T$ as the linear map 
\begin{equation}\label{eq:flattening_intro}
    T^{(i)} : (\C^{d_{i+1}}\otimes\dots\otimes \C^{d_N})^* \longrightarrow (\C^{d_1}\otimes\dots\otimes \C^{d_i}).
\end{equation}
The tensor train variety of \emph{signature} $(D,d)$ is the projective algebraic variety defined by \cite[Lemma 2.2]{Borovik_2025} 
\begin{equation}\label{eq:TTdef}
    V_{D, d} = \{[T]\in
\P\!\left(
\C^{d_1}\otimes\cdots\otimes \C^{d_N}
\right)\,|\,\rank(T^{(r)}) \leq D_r\,,\ r=1,... ,N-1\}\,. 
\end{equation}
The bond dimensions $D$ are called \emph{admissible} for $d$ if
\[
D_r\leq D_{r-1}d_r
\qquad\text{and}\qquad
D_r\leq d_{r+1}D_{r+1},
\qquad r=1,\dots,N-1.
\]
We denote $V^=_{D, d}$ the full-rank stratum of $V_{D,d}$, where each rank condition in \eqref{eq:TTdef} is satisfied with equality.
\\

The dimension of $V_{D,d}$ is computed in \cite{Bernardi_2022}. It is natural to study other invariants, such as the degree. In this paper, we derive an exact procedure for computing the degree of TT varieties. Drawing ideas from \cite{Breiding_2024}, where the authors derive a formula for the degree of tensor subspace varieties \cite{landsberg_2012}, we base our approach on integral geometry. Although the Tucker decomposition considered in \cite{Breiding_2024} is similar, their results are not applicable to tensor trains. Instead, the specific format of tensor trains forces us to introduce a recursive construction of the varieties (see Section \ref{sec:tail_var_and_psi_r}).
\\
\noindent
We will use the following result, which is a consequence of the projective kinematic formula \cite{LerarioNotes, howard_1993}. It links the degree of a projective $m$-dimensional algebraic variety $X \subset \P^c$ to its volume (with respect to the Fubini-Study metric; see Section \ref{sec:preliminaries}):
\begin{equation}\label{eq:kinematic_formula}
    \deg(X)
=
\frac{\vol(X)}{\vol(\P^m)}.
\end{equation}

\noindent 
We denote by ${\rm Gr}(k,n)$ the Grassmannian of $k$-dimensional linear spaces in $\C^n$. Its degree is given in \eqref{eq:gr_degree} (see Section \ref{sec:preliminaries}). Our main result is the following.
\begin{theorem}[Degree of tensor train varieties]\label{thm:degree_formula}
Let $V_{D,d}\subset\P\!\left(\C^{d_1}\otimes\cdots\otimes\C^{d_N}\right)$ be the tensor train variety of signature \((D,d)\) and define $m_r:=D_{r-1}d_r-D_r$.
The degree of the tensor train variety is 
\[
\deg(V_{D,d})
=
\frac{
\displaystyle
\prod_{r=1}^{N-1}
\deg\bigl(\Gr(D_r,D_{r-1}d_r)\bigr)
}{
\displaystyle
\prod_{r=1}^{N-1}
(D_rm_r)!
}
\,
f(P_{D,d}),
\]
where $f(P_{D,d})$ is a purely combinatorial factor depending only the signature $(D,d)$ (see \eqref{eq:super_long_f}).
\end{theorem}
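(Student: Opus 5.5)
The plan is to compute $\vol(V_{D,d})$ and apply the kinematic formula \eqref{eq:kinematic_formula}. This reduces the degree to the quotient $\vol(V_{D,d})/\vol(\P^m)$, where $m=\dim V_{D,d}$. We may assume $D$ is admissible, so that $V^=_{D,d}$ is dense in $V_{D,d}$ and the lower-rank strata have measure zero.

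We parametrize $V^=_{D,d}$ recursively, following the left-to-right sweep of a TT decomposition (left-orthogonal cores). A tensor $T$ in the full-rank stratum is determined by the following data:
\begin{itemize}
\item a $D_1$-dimensional subspace $W_1\subset \C^{D_0 d_1}$, namely the column space of $T^{(1)}$;
\item then a $D_2$-dimensional subspace $W_2\subset W_1\otimes \C^{d_2}\cong \C^{D_1d_2}$;
\item and so on up to $W_{N-1}\subset W_{N-2}\otimes\C^{d_{N-1}}\cong \C^{D_{N-2}d_{N-1}}$;
\item finally a point $[t]\in \P(W_{N-1}\otimes \C^{d_N})$.
\end{itemize}
This exhibits $V^=_{D,d}$ as birational to an iterated tower of Grassmannian bundles $\Gr(D_r, D_{r-1}d_r)$ (the ``tail varieties'' of Section~\ref{sec:tail_var_and_psi_r}), capped by a projective space $\P^{D_{N-1}d_N-1}$. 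The map $\Psi$ sending this data to $[T]$ is equivariant under the unitary group acting on the first factors.

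We then compute the volume via the coarea formula along this fibration. Using unitary invariance, we fix each $W_r$ to a standard coordinate subspace and integrate over a local chart of the Grassmannian. The volume becomes
\[
\prod_r \vol\bigl(\Gr(D_r,D_{r-1}d_r)\bigr)\cdot \int \NJ\,,
\]
where $\NJ$ is the normal Jacobian of $\Psi$ and the integral runs over the remaining fibre variables. We use $\vol(\Gr(k,n))=\deg(\Gr(k,n))\,\pi^{k(n-k)}/(k(n-k))!$, with $k(n-k)=D_rm_r$. This produces the displayed factor
\[
\prod_r \frac{\deg(\Gr(D_r,D_{r-1}d_r))}{(D_rm_r)!},
\]
up to powers of $\pi$.

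The powers of $\pi$ must cancel against $\vol(\P^m)=\pi^m/m!$, which holds because $m=\sum_r D_rm_r + D_{N-1}d_N-1$.

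The main obstacle is the Jacobian integral. The tangent map of $\Psi$ at a point where the $W_r$ are coordinate subspaces stretches the Grassmannian directions of step $r$ by the singular values of the tail tensor, that is, the right part of $T$ viewed as a $D_r\times(\cdots)$ matrix. So $\NJ$ is a product over $r$ of
\[
\det\Bigl(\textstyle\sum_{i}\sigma_{r,i}^2\Bigr)^{m_r}\text{-type}
\]
Gram determinants, each a polynomial in the squared moduli of the tail coordinates. I would write this as a product of determinants of Gram matrices of the successive tails, then expand the product into monomials in $|x_j|^2$. Each monomial integrates over the sphere (or, after homogenising, against a Gaussian) as
\[
\prod_j \alpha_j!\,\big/\,\bigl(\textstyle\sum_j\alpha_j+\text{dim}-1\bigr)!\,.
\]
The resulting weighted sum over exponent patterns, indexed by a polytope/poset $P_{D,d}$ encoding the admissible degree patterns, is by definition the combinatorial factor $f(P_{D,d})$ in \eqref{eq:super_long_f}.

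The delicate points are two. First, the recursion must be set up correctly, since the Jacobian at step $r$ depends on the Gram matrix of the tail, itself parametrized by later steps; I would integrate from the right end inward. Second, the expansion of the products of determinants into monomials must be organized so that $f$ depends only on $(D,d)$.
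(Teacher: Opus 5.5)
\textbf{The gap.} You have the outer shell of the argument right: split off $\prod_r \vol(\Gr(D_r,D_{r-1}d_r))$, convert these volumes to degrees, cancel the powers of $\pi$ using $\dim V_{D,d}=\sum_r D_rm_r+D_{N-1}d_N-1$, and finish with a Gaussian moment. Your per-step Jacobian $\det(\Sigma_{r+1})^{m_r}$ is also the right one. The proof breaks at the step ``using unitary invariance, we fix each $W_r$ to a standard coordinate subspace.''

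The Jacobian factors are not invariant along the Grassmannian fibres. Since $\Sigma_r=\sum_{j}M^r_j\Sigma_{r+1}(M^r_j)^*$, each factor depends on how $W_r$ sits inside $W_{r-1}\otimes\C^{d_r}$. The available symmetries are $\U(D_{r-1})$ acting blockwise on the left and $\U(D_r)$ on the right, and neither acts transitively on $\Gr(D_r,D_{r-1}d_r)$.

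So after the Grassmannian volumes are split off, what remains is not the Jacobian at one coordinate frame. It is the normalized Haar average of the Jacobian over all intermediate frames $M^2,\dots,M^{N-1}$. This is exactly the paper's recursion $F_r(\Sigma)=\det(\Sigma)^{m_r}\int_{\St(D_r,n_r)}F_{r-1}(L_{M}(\Sigma))\,d\mu(M)$, which gives $\vol(V_{D,d})=\prod_r\vol(\Gr(D_r,n_r))\int_{\P^K}F_{N-1}(CC^*)$.

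\textbf{A concrete failure.} Take $D=(1,2,2,2,1)$, $d=(3,2,2,2)$.
\begin{itemize}
\item Choose $W_2=W_1\otimes e_1$ and $W_3=W_2\otimes e_1$, i.e.\ blocks $M^r_1=I_2$, $M^r_2=0$ for $r=2,3$. The integrand becomes $\det(CC^*)^{5}=s_{(5,5)}(CC^*)$. This gives $f=86400$ and a ``degree'' of $300$, whereas the paper's value is $276$.
\item Choose instead $W_2=\mathrm{span}\{e_1\otimes e_1,e_1\otimes e_2\}$. Then $\Sigma_2$ is singular and the integrand vanishes identically.
\item Already for $\Sigma=I_2$, the average of $\det(L_M(\Sigma))$ over $\St(2,4)$ is $4/5$, not $1$.
\end{itemize}
The outcome depends on which coordinate subspace you pick, so the procedure is not even well defined.

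\textbf{The missing idea} is how to evaluate these iterated Stiefel averages. The paper proceeds as follows:
\begin{itemize}
\item It uses $\U(D_{r-1})$-invariance to expand $F_{r-1}$ in Schur polynomials.
\item It computes $\int_{\St(D_r,n_r)}s_\lambda(L_M(\Sigma))\,d\mu(M)=\sum_\mu h^{(r)}_{\lambda,\mu}s_\mu(\Sigma)$ via the Frobenius formula, the Stiefel--Weingarten formula and character orthogonality.
\item It absorbs $\det(\Sigma)^{m_r}$ as the partition shift $\lambda\mapsto\lambda+(m_r^{D_r})$.
\item Only the final integral over $\P^K$ is a Gaussian moment, done with the Wishart density and Andr\'eief's identity; your monomial approach could substitute there.
\end{itemize}

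\textbf{Other issues.}
\begin{itemize}
\item $P_{D,d}$ is the tail polynomial $F_{N-1}(CC^*)$, not a polytope or poset.
\item \eqref{eq:super_long_f} is a specific expression in the coefficients $c^{(N-2)}_\lambda$ and $h^{(N-1)}_{\lambda,\mu}$. Declaring your sum to be $f(P_{D,d})$ ``by definition'' skips precisely the content of the theorem.
\item You never fix a metric on the Grassmannian tower, so the claim that its normal Jacobian is $\prod_r\det(\Sigma_{r+1})^{m_r}$ is not justified as stated. The paper works with Stiefel frames, where the fibres of $\Psi_r$ are $\U(D_r)$-orbits of volume $\vol(\U(D_r))\,J_r$. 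The same factor $J_r=\sqrt{\det(I+B_r^*B_r)}$ appears in the normal Jacobian on the block $L$, so the two cancel.
\end{itemize}
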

The main result in Theorem \ref{thm:degree_formula} allows an algorithmic implementation. We provide the registered and ready-to-use software package \texttt{TTVarietyDegree.jl} in \texttt{julia}. All details, a user manual and instructions for importing the package are provided in the \texttt{GitHub} repository 
\[\texttt{https://github.com/OTPS3141/TTVarietyDegree.jl}.\]
We benchmarked the results of our algorithm comparing it with available numerical methods in \texttt{HomotopyContinuation.jl} and symbolic ones in \texttt{Macaulay2}. For all the examples using reasonable computational resources, we were able to verify the correctness of our result (see Section \ref{sec:implementation}). 

\begin{example}
To highlight the efficiency of our implementation, we compute the degree for a few signatures. We start with examples in $N=3$ as seen in Table \ref{tab:degree_ex}.

\begin{table}[H]
\centering
\begin{tabular}{cccc}
\toprule
\multicolumn{1}{c}{} & \multicolumn{3}{c}{\textbf{$d$}} \\
\cmidrule(rl){2-4} 
\textbf{$D$} & \((3,3,3)\) & \((5,5,5)\) & \((7,7,7)\)  \\
\midrule
\((1,2,2,1)\) & 306 & 347331250 & 1175573068417440  \\
\((1,3,3,1)\) & 1 & 8380140000 & 594997569835451447952  \\
\bottomrule
\end{tabular}
\caption{Some degrees of $V_{D,d}$ for $N=3$ computed using \texttt{TTVarietyDegree.jl}.}
\label{tab:degree_ex}
\end{table}

\noindent
The computation of degrees of higher order tensor trains is also possible. Consider the order-$12$ tensors of signature $d = (2, 2, 2, 2, 2, 2, 2, 2, 2, 2, 2, 2)$ and $D = (1, 2, 2, 2, 2, 2, 2, 2, 2, 2, 2, 2, 1)$.
Using \texttt{TTVarietyDegree.jl}, the degree evaluates to $\deg(V_{D,d}) = 138254733723634727792624640 \approx 1.4\times 10^{26} $. This computation requires less than $400 \mathrm{MB}$ RAM  and terminates in less than a minute on a personal laptop.
\end{example}

Tensor train decompositions of tensors are also known in quantum many-body physics as \emph{matrix product states (MPS) with open boundary conditions}. MPS are central in the study of one-dimensional quantum many-body
systems, for instance in density matrix renormalization group (DMRG) routines \cite{White1992,Schollwoeck2011,Orus2014}, but also in fields like machine learning \cite{cheng_2019}.\\

Information about the degree of TT varieties is both computationally and
geometrically relevant. In physics, one may view it as an algebraic measure for the complexity
of reconstructing a quantum state from linear observables under a fixed
tensor train signature. Consider an unknown state $\eta \in V_{D, d}$ that we want to characterize. To do so, we perform generic measurements, that is we project the unknown state onto known test states $\phi_i$,
\begin{equation*}
    \langle \phi_i , \eta \rangle = \ell_i.
\end{equation*}
Each projection imposes a linear constraint $\langle \phi_i , \eta \rangle - \ell_i = 0$ on the unknown state $\eta$. After $\dim (V_{D, d})$-many measurements, the number of states satisfying these projection constraints is exactly the degree of $V_{D, d}$. Thus, the degree quantifies the identifiability of a state under linear measurements.  \\

The paper is organized as follows. In Section \ref{sec:preliminaries} we introduce the used language and notation and discuss the main tool we use to compute volumes, the smooth coarea formula. In Section \ref{sec:tail_var_and_psi_r} we define a recursive parametrization by introducing tail varieties and their associated one-step maps. Section \ref{sec:NJ_fiber_volume} establishes key results about metric-related properties of one-step maps. In Section \ref{sec:recursive_volume_comp}, we apply the smooth coarea formula to derive a recursive expression for the volume of TT varieties. In Section \ref{sec:degree} we evaluate the volume and conclude the degree computation via the kinematic formula. Finally, we illustrate the algorithmic implementation of our result and compare it to existing methods (Section \ref{sec:implementation}). \\

\textbf{Acknowledgements} We thank Paul Breiding, Viktoriia Borovik and Simon Telen for useful discussions and feedbacks. OTPS has been supported by European Union’s HORIZON–MSCA-2023-DN-JD programme
under the Horizon Europe (HORIZON) Marie Sklodowska-Curie Actions, grant agreement
101120296 (TENORS).  

\section{Preliminaries}\label{sec:preliminaries}


Throughout this paper, whenever a vector space $V$ is endowed with an inner product $\langle\cdot , \cdot \rangle$, orthogonality is understood with respect to it. If $U \subset V$ is a subspace, its orthogonal complement is denoted by $U^{\perp}=\{v \in V \ | \ \langle v,u\rangle=0 \ \  \forall u \in U\}$. \\

Denote by $\U(c+1)$ the unitary group of size $c+1$. It acts transitively on the (complex) projective space $\P^{c}$. There is a unique, up to scaling, $\U(c+1)$-invariant Riemannian metric for this action (\cite{Cartan_1952}), defined as follows. Fix a representative $x \in \C^{c+1}$ for $[x] \in \P^c$. The tangent space to $\P^c$ at $[x]$ can be identified with the orthogonal $x^{\perp}=\{y \in \C^{c+1} \ | \ \langle x,y \rangle =0\} \cong T_{[x]}\P^c$, where $\langle\cdot ,\cdot \rangle$ denotes the standard Hermitian inner product on $\C^{c+1}$. The \emph{Fubini-Study inner product} on $T_{[x]}\P^c$ is given by 
\begin{equation}\label{eq:FS_product}
    \langle v,w\rangle_{FS} = \frac{{\rm Re}(\langle v,w\rangle)}{\|x\|}, \quad \forall v,w \in x^{\perp}\cong T_{[x]}\P^c \, .
\end{equation}
We refer to the corresponding Riemannian metric as the \emph{Fubini-Study metric} on $\P^c$ and denote it by $g_{FS}$. The induced volume measure $\vol(\cdot)$ inherits the $\U(c+1)$-invariance. We recall that the Fubini-Study volume of the projective space is given by 
\begin{equation}\label{eq:proj_volume}
    \vol(\P^c)=\frac{\pi^c}{c!}\, . 
\end{equation}
Given a smooth manifold $X \subset \P^c$, we endow $X$ with the Riemannian metric induced by the Fubini-Study metric, i.e., for $[x]\in X$ we restrict the inner product \eqref{eq:FS_product} to the tangent space $T_{[x]}X \subset T_{[x]}\P^c$. We denote the induced volume measure on $X$ by $\vol_X(\cdot)$. \\

The \emph{Stiefel manifold} of orthonormal $k$-frames of $\C^n$ is denoted by 
\begin{equation*}
    \St(k,n)=\{M \in \C^{n\times k} \ | \ M^*M = I \} \subset \C^{n\times k}.
\end{equation*}
In other words, the matrices $M \in \St(k,n)$ have orthonormal columns with respect to the standard Hermitian product. The unitary group $\U(n)$ acts on $\St(k,n)$ via left-multiplication, with isotropy subgroup given by $\U(n-k)$. This action realizes $\St(k,n)$ as the smooth homogeneous manifold $\U(n) / \U(n-k)$, hence its real dimension is given by 
\begin{equation}\label{eq:stiefel_dim}
    \dim_{\R}(\St(k,n))=\dim_{\R}(\U(n)) - \dim_{\R}(\U(n-k)) = 2nk - k^2.
\end{equation}
We consider the inner product on $\C^{n\times k}$ defined by taking the real part of the Frobenius product:
\begin{equation}\label{eq:real_frobenius}
    \langle A,B\rangle_F : = {\rm Re}(\rm tr(A^*B)). 
\end{equation}
It induces the $\U(n)$-invariant Riemannian metric $g_{F}$ on $\St(k,n) \subset \C^{n\times k}$.  
Recall that the Grassmannian of $k$-dimensional linear spaces in $\C^n$ is related to the Stiefel manifold by the quotient $\Gr(k,n) \cong \St(k,n)/\U(k)$, since for any $U \in \U(k)$ and $M \in \St(k,n)$ the column span of $M$ and $MU$  are the same. Seen as a projective variety in $\P(\Lambda^k\C^n)$ via the Pl\"ucker embedding, $\Gr(k,n)$ inherits a Riemannian metric. With respect to it, the quotient map $\St(k,n) \rightarrow \St(k,n)/\U(k) \cong  \Gr(k,n)$ is a Riemannian submersion, hence
\begin{equation}\label{eq:vol_gr}
    \vol(\Gr(k,n)) = \frac{\vol(\St(k,n))}{\vol(\U(k))}\,,
\end{equation}
where $\vol(\St(k,n))$ and $\vol(\U(k))$ both refer to the volume measures induced by \eqref{eq:real_frobenius} on $\St(k,n)$ and $\U(k)$ respectively. We use the shorthand notation $dM$ to denote $d\vol_{\St(k,n)}(M)$. The normalized volume form on $\St(k,n)$ is instead denoted by $\mu$ and we refer to $d\mu(M)$ as the \emph{normalized Haar measure}.   \\

Finally, given a product of Riemannian manifolds $(P_1,g_1)\times \dots \times (P_s,g_s)$, we endow it with the product Riemannian metric $g = g_1\times\dots\times g_s$. Denoting $P=P_1\times\dots\times P_s$ and $x=(x_i)_{i=1}^s \in P$, recall that $T_xP = T_{x_1}P_1\times\dots\times T_{x_s}P_s$. Then, given $v=(v_i)_{i=1}^s$ and $w=(w_i)_{i=1}^s$ in  $T_{x}P$, we have 
\begin{equation*}
    g_x(v,w):= (g_1)_{x_1}(v_1,w_1) + \cdots  +(g_s)_{x_s}(v_s,w_s) \,.
\end{equation*}

\subsection{The smooth coarea formula}\label{sec:smooth_coarea}
Given a smooth map $\varphi:X \rightarrow Y$ between smooth manifolds, we denote by $D_x\varphi : T_xX \rightarrow T_{\varphi(x)}Y$ the differential of $\varphi$ at $x\in X$. The map $\varphi$ is a \emph{submersion} if the differential $D_x\varphi$ is surjective for every $x \in X$; in particular $\dim(X)\geq \dim(Y)$ and for every $y \in Y$ the fiber $\varphi^{-1}(y)$ is either a smooth submanifold of $X$ of dimension $\dim(X)-\dim(Y)$ or empty. \\
If $\varphi:X\rightarrow Y$ is a smooth submersion between Riemannian manifolds, the restriction of $D_x\varphi$ to $\mathcal K_x^{\perp} :=(\ker(D_x\varphi))^{\perp}$ is an isomorphism. We define the \emph{normal Jacobian} of $\varphi$ at $x$ as its determinant
\begin{equation*}
    \NJ(\varphi,x) := | \det(D_x\varphi|_{\mathcal K_x^{\perp}})|\,,
\end{equation*}
computed with respect to \emph{orthonormal} bases. Given \emph{any} basis $\{v_1,\dots,v_l\}$ of $\mathcal K_x^{\perp}$, denote by $G$ the Gram matrix of the basis elements and by $R$ the Gram matrix of their images $\{D_x\varphi(v_1),\dots,D_x\varphi(v_l)\}$ under the differential. Then the normal Jacobian is equivalently given by
\begin{equation}\label{eq:NJ_any}
    \NJ(\varphi,x)=\sqrt{\frac{\det(R)}{\det(G)}} \, .
\end{equation}

We now state the smooth coarea formula. Its most general version for Lipschitz maps can be found in \cite{Federer_1969}. For a proof in this simpler setting we refer to \cite[Section A-2]{howard_1993}.

\begin{theorem}\label{thm:smooth_coarea}
    Let $\varphi:X\rightarrow Y$ be a smooth submersion between Riemannian manifolds and let $h:X\rightarrow \R$ be measurable. Then  
    \begin{align*}
        \int_X h(x)\,\NJ(\varphi,x)\, d\vol_X(x) = \int_Y\left(\int_{\varphi^{-1}(y)} h(x)\, d\vol_{\varphi^{-1}(y)}(x)\right)d\vol_Y(y).
    \end{align*}
\end{theorem}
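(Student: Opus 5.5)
The statement immediately above is the smooth coarea formula (Theorem~\ref{thm:smooth_coarea}), which the paper quotes from \cite{howard_1993}. I would prove it by reducing to Fubini's theorem in local coordinates adapted to the submersion.

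\emph{Localization.} First assume $h\ge 0$. Both sides are then countably additive in $h$, so monotone convergence reduces the claim to $h=\mathbbm{1}_A$ for Borel sets $A$. Using a partition of unity, or a countable cover by coordinate patches, it further suffices to treat $A$ contained in a small open set $W\subset X$. For signed or integrable $h$, split $h=h^+-h^-$ at the end.

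\emph{Adapted coordinates.} Since $D_x\varphi$ is surjective, the implicit function theorem lets us choose $W$ and coordinates $x=\Phi(y,z)$, with $y\in V\subset Y$ and $z$ in an open set $Z\subset\R^{k}$, $k=\dim X-\dim Y$, such that $\varphi(\Phi(y,z))=y$. Each fiber slice $\varphi^{-1}(y)\cap W$ is then parametrized by $z\mapsto \Phi(y,z)$. In these coordinates we have three volume forms:
\begin{itemize}
\item $d\vol_X=J_X(y,z)\,dy\,dz$,
\item $d\vol_Y=J_Y(y)\,dy$,
\item $d\vol_{\varphi^{-1}(y)}=J_F(y,z)\,dz$,
\end{itemize}
where each $J$ is the square root of a Gram determinant of the relevant coordinate tangent vectors. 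Applying Fubini in $(y,z)$, the theorem reduces to the pointwise identity
\[
\NJ(\varphi,x)\,J_X(y,z)=J_Y(y)\,J_F(y,z).
\]

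\emph{The linear-algebra identity (main step).} At a point $x$, split $T_xX=\mathcal K_x\oplus\mathcal K_x^\perp$ orthogonally. The vectors $\partial_z\Phi$ form a basis of $\mathcal K_x$. The vectors $\partial_y\Phi$ decompose as $u_i+w_i$ with $u_i\in\mathcal K_x^\perp$ and $w_i\in\mathcal K_x$. Subtracting kernel components does not change the determinant of a block-triangular Gram matrix, so
\[
J_X=\operatorname{vol}(\partial_z\Phi)\cdot\operatorname{vol}(u_1,\dots,u_m)=J_F\cdot\operatorname{vol}(u).
\]
Moreover, $D_x\varphi(u_i)=D_x\varphi(\partial_{y_i}\Phi)=\partial_{y_i}$. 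Therefore
\[
\NJ\cdot\operatorname{vol}(u)=\operatorname{vol}(\partial_{y})=J_Y,
\]
by the Gram-determinant characterization \eqref{eq:NJ_any} of the normal Jacobian. Multiplying the two relations gives exactly the pointwise identity above. I expect this step to be the main obstacle. The two points needing care are the correct use of the orthogonal projection onto $\mathcal K_x^\perp$ and the determinant bookkeeping, where a sign or a square root is easily lost.

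\emph{Globalization.} Summing over the cover, the contributions glue because the fiber measures $\vol_{\varphi^{-1}(y)}$ are intrinsic, i.e.\ independent of the coordinates chosen. Measurability of $y\mapsto\int_{\varphi^{-1}(y)}h$ follows from Fubini applied in each chart. If a fiber is empty, the corresponding point $y$ contributes zero, consistent with the statement.
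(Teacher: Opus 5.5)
Your proof is correct and is the standard argument for the smooth case. The paper gives no proof of its own, citing \cite{Federer_1969} and \cite{howard_1993}, and your route (local normal form of the submersion, Fubini in adapted coordinates, and the pointwise identity $\NJ(\varphi,x)\,J_X=J_Y\,J_F$ obtained by projecting $\partial_y\Phi$ onto $\mathcal K_x^\perp$) follows the smooth-case argument of the kind found in the latter reference. Two points to tighten: replacing $\partial_{y_i}\Phi$ by $u_i=\partial_{y_i}\Phi-w_i$ is a determinant-one change of basis after which the Gram matrix becomes block diagonal, and for signed or merely Lebesgue-measurable $h$ you should state the usual integrability and almost-everywhere caveats.
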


Now let $\varphi:X\rightarrow Y$ be as above and let instead $h:Y \rightarrow\R$ be measurable and such that the composition $\tilde h:=h\circ\varphi:X \rightarrow \R$ is measurable. Applying Theorem \ref{thm:smooth_coarea} to $\tilde h$, one obtains as a corollary
\begin{equation}\label{eq:coarea_general_clean}
    \int_X (h\circ\varphi)(x)\,\NJ(\varphi,x)\,d\vol_X(x) = \int_Y h(y) \vol_X(\varphi^{-1}(y))\,d\vol_Y(y)\, .
\end{equation}
We will use formula \eqref{eq:coarea_general_clean} to compute volumes of smoothly parametrized manifolds.

\subsection{Parametrization of tensor train varieties}\label{sec:pre_parametrization}
We define a parametrization for the tensor train variety as follows. For $r=1,\dots,N-1$ define $n_r:= d_r D_{r-1}$ and let
\[
\mathcal P
:=
\prod_{r=1}^{N-1}\St(D_r,n_r)
\times
\P^=\!\left(\C^{D_{N-1}\times d_N}\right),
\]
where $\P^{=}\!\bigl(\C^{D_{N-1}\times d_N}\bigr)$ is the projectivization of the space of full rank $D_{N-1}\times d_N$ complex matrices. Note that $\mathcal P$ is a smooth Riemannian manifold.
We decompose $M^r \in \St(D_r,n_r) \subset \C^{n_r\times D_r}$ in $d_r$ blocks of size $D_{r-1}\times D_r$ as
\begin{equation}\label{eq:stiefelblocks}
M^r=
\begin{bmatrix}
M^r_1\\
\vdots\\
M^r_{d_r}
\end{bmatrix},
\qquad
M^r_{j_r}\in \C^{D_{r-1}\times D_r}.
\end{equation}
Similarly we think of $C \in \P^=(\C^{D_{N-1}\times d_N})$ as a block matrix
\[
C=
\begin{bmatrix}
C_1 & \cdots & C_{d_N}
\end{bmatrix} \in \P^{=}\!\bigl(\C^{D_{N-1}\times d_N}\bigr),
\qquad
C_j\in \C^{D_{N-1}\times 1}.
\]

\noindent As introduced earlier, we define the full-rank stratum $V^{=}_{D,d}$ of the TT variety $V_{D,d}$ as
\[
V^{=}_{D,d}
:=
\left\{
[T]\in
V_{D,d}
:
\rank(T^{(r)})=D_r
\text{ for } r=1,\dots,N-1
\right\}.
\]
It is a smooth Riemannian manifold with the metric induced by the ambient space $\P(\C^{d_1} \otimes \cdots \otimes \C^{d_N})$. We define the \emph{tensor train map} as the smooth surjective submersion between Riemannian manifolds
\[
\Phi:\mathcal P\longrightarrow
V^{=}_{D,d} \subset\P\!\left(
\C^{d_1}\otimes\cdots\otimes \C^{d_N}
\right), \quad
(M^1,\dots,M^{N-1},[C])\longmapsto [T], 
\]
where the tensor is given as the matrix contraction $T_{j_1,\dots,j_N}
=
M^1_{j_1}M^2_{j_2}\cdots M^{N-1}_{j_{N-1}}C_{j_N}$ (see Figure 
\ref{fig:TT_decomp}). 
\begin{figure}[]
\centering
\begin{tikzpicture}[x=1cm,y=1cm]

  \begin{scope}
    \foreach \i/\c/\name in {
      0/tnlightblue/M^1,
      1/tnlightblue/M^2,
      2/tnlightblue/M^3,
      3/tnlightblue/M^4,
      4/tnpurple/C
    }{
      \pgfmathtruncatemacro{\site}{\i+1}
      \node[tensor=\c] (T\i) at (1.5*\i,0) {$\name$};
      \draw[leg] (T\i) -- ++(0,0.75)
        node[lab,above] {$d_{\site}$};
    }

    \foreach \i in {0,1,2,3}{
      \pgfmathtruncatemacro{\j}{\i+1}
      \draw[leg] (T\i) -- node[lab,above] {$D_{\j}$} (T\j);
    }
  \end{scope}

  \node[font=\Large] at (7.4,0) {$=$};

  \begin{scope}[xshift=9cm]
    \node[tensor=tnorange, minimum size=11mm] (A) at (0,0) {$[T]$};

    \draw[leg] (A) -- ++(-1.2,0.9)
      node[lab,above] {$d_1$};

    \draw[leg] (A) -- ++(-0.6,1.15)
      node[lab,above] {$d_2$};

    \draw[leg] (A) -- ++(0,1.25)
      node[lab,above] {$d_3$};

    \draw[leg] (A) -- ++(0.6,1.15)
      node[lab,above] {$d_4$};

    \draw[leg] (A) -- ++(1.2,0.9)
      node[lab,above] {$d_5$};
  \end{scope}

\end{tikzpicture}

\caption{A tensor-train representation of a projective tensor $[T]$ for $N=5$.}
\label{fig:TT_decomp}
\end{figure}

Given $[T] = \Phi(M^1,\dots,M^{N-1},[C])$, its fiber is given by 
\begin{equation}\label{eq:fiber_phi}
    \Phi^{-1}([T]) = \{(M^1U_1, U_1^* M^2 U_2, \dots, U_{N-2}^*M^{N-1} U_{N-1}, [U_{N-1}^*C])\, :\, U_i \in \U(D_i)\},
\end{equation}
where the left-action of $U_i^*=U_i^{-1}$ is meant block-wise. This fiber is diffeomorphic to $\prod_{i=1}^{N-1}
\U(D_i)$.
\\ ~ \\
The projective dimension of the TT variety is obtained from a parameter count and subtracting the fiber dimension \cite{Bernardi_2022}. Since $V_{D,d}^=$ is a Zariski-open dense subset of $V_{D,d}$, we have 
\begin{equation}\label{eq:V_dimension}
    \dim_{\C}(V_{D,d})=\dim_{\C}(V_{D,d}^=) = \sum_{i=1}^{N}D_{i}n_i - \sum_{i=1}^{N-1}D_i^2 -1.
\end{equation}

\section{Tail varieties and one-step maps}\label{sec:tail_var_and_psi_r} 

Our goal is to use the kinematic formula \eqref{eq:kinematic_formula} to evaluate the degree of TT varieties. A natural strategy to compute the volume of $V_{D,d}$ is to apply the coarea formula \eqref{eq:coarea_general_clean} to the tensor train map $\Phi:\mathcal P\to V^{=}_{D,d}$. As outlined in Section \ref{sec:smooth_coarea}, this entails the computation of the normal Jacobian of $\Phi$. This, however, requires a description of the orthogonal complement to the kernel of the differential $D_p\Phi$ for every $p \in \mathcal P$. \\ 

\noindent The kernel of $D_p\Phi$ at a point $p \in \mathcal P$ is given by the tangent space to the fiber \eqref{eq:fiber_phi} through it. If we only have one Stiefel factor, that is, $N=2$, the condition to be orthogonal to the kernel is a single Hermiticity constraint; thus, the computation of the normal Jacobian is simple. However, if $N\geq 3$, we get multiple, coupled Hermiticity conditions. This makes an explicit computation unfeasible for larger $N$. \\

The idea is therefore to exploit the simplicity of the $N=2$ case. To do this, we introduce a sequence of smaller tensor train varieties, the \emph{tail varieties}. Each of them is parametrized via \emph{one-step maps}, whose domain is a product between a Stiefel manifold and the previous variety in the sequence. This defines a recursive hierarchy of tail varieties, starting with $\P(\C^{D_{N-1}\times d_N})$ and ultimately yielding the full-rank stratum $V_{D,d}^=$.

\subsection*{Tail varieties}

We now introduce the recursive construction of tensor train varieties via tail varieties and the associated one-step maps. We use a slightly different tensor flattening notation compared to \eqref{eq:flattening_intro}, as there is a shift in the superscript. For a tensor $T_r \in \C^{D_{r-1}} \otimes \C^{d_r}\dots \otimes \C^{d_N}$ and any $s=r-1,\dots,N-1$ let
\[
T_r^{(s)}:\left(\C^{d_{s+1}}\otimes\cdots\otimes \C^{d_N}\right)^*
\longrightarrow 
\C^{D_{r-1}}\otimes
\C^{d_r}\otimes\cdots\otimes
\C^{d_s}
\]
be the flattening after the index $s$, with the convention that for $s=r-1$ the codomain is $\C^{D_{r-1}}$. For any $r=1,\dots,N$ we define the \emph{$r$-th tail variety} 
\[
X_r
:=
\left\{
[T_r]\in \P(\C^{D_{r-1}}\otimes
\C^{d_r}\otimes\cdots\otimes
\C^{d_N}):
\rank\bigl(T_r^{(s)}\bigr)=D_s
\text{ for } s=r-1,\dots,N-1
\right\}.
\]
In particular, $X_1=V^{=}_{D,d}$ is the full-rank stratum of $V_{D,d}$ and $X_N=\P^{=}\!\bigl(\C^{D_{N-1}\times d_N}\bigr)$ (see Fig. \ref{fig:tail_varieties}).
\begin{figure}[H]
\centering

\begin{subfigure}[t]{0.58\textwidth}
\centering
\begin{tikzpicture}[
  x=1cm,y=1cm,
  scale=0.82,
  transform shape,
  tensor/.append style={minimum size=10mm,text width=12mm}
]

\def\s{1.2}
\def\l{1}

\node[tensor=tnlightblue] (Mr)  at (0*\l,0) {$M^r$};
\node[tensor=tnlightblue] (Mrp) at (2.2*\l,0) {$M^{r+1}$};
\node[tensor=tnlightblue] (MNm) at (4.8*\l,0) {$M^{N-1}$};
\node[tensor=tnpurple]    (C)   at (7*\l,0) {$C$};

\draw[leg] (Mr) -- ++(-\s,0) node[lab,above] {$D_{r-1}$};

\draw[leg] (Mr)  -- ++(0,1) node[lab,above] {$d_r$};
\draw[leg] (Mrp) -- ++(0,1) node[lab,above] {$d_{r+1}$};
\draw[leg] (MNm) -- ++(0,1) node[lab,above] {$d_{N-1}$};
\draw[leg] (C)   -- ++(0,1) node[lab,above] {$d_N$};

\draw[leg] (Mr) -- node[lab,above] {$D_r$} (Mrp);

\draw[leg] (Mrp) -- ++(\l,0);
\node at (3.5*\l,0) {$\cdots$};
\draw[leg] (MNm) -- ++(-\l,0);

\draw[leg] (MNm) -- node[lab,above] {$D_{N-1}$} (C);

\end{tikzpicture}

\label{fig:tail_variety_Xr}
\end{subfigure}
\hspace{0em}
\begin{subfigure}[t]{0.25\textwidth}
\centering
\begin{tikzpicture}[
  x=1cm,y=1cm,
  scale=0.82,
  transform shape,
  tensor/.append style={minimum size=10mm,text width=12mm}
]

\def\s{1.2}

\node[tensor=tnpurple] (C) at (0,0) {$C$};

\draw[leg] (C) -- ++(0,1) node[lab,above] {$d_N$};
\draw[leg] (C) -- ++(-\s,0) node[lab,above] {$D_{N-1}$};

\end{tikzpicture}
\label{fig:tail_variety_XN}
\end{subfigure}

\caption{Tensor-network representations of the tail varieties \(X_r\) (left) and \(X_N\) (right).}
\label{fig:tail_varieties}

\end{figure}
\noindent
Note that each tail variety $X_r$ can be seen as a TT variety with adjusted signature $d=(D_{r-1},d_r,...,d_N)$ and $D=(D_{r-1},\dots, D_{N-1}, D_N)$. This also allows us to derive their complex dimension:
\begin{equation}\label{eq:X_r_dimension}
    \dim_{\C}(X_r) := \sum_{i=r}^{N}D_{i}n_i - \sum_{i=r}^{N-1}D_i^2 -1.
\end{equation}

\subsection*{The one-step maps}\label{sec:one_step_map}

We can parametrize the tail variety $X_r$ recursively in terms of the next one $X_{r+1}$ as follows. For \(r=1,\dots,N-1\), let $Y_r := \St(D_r,n_r)\times X_{r+1}$. The \emph{$r$-th one-step map} $\Psi_r:Y_r\longrightarrow X_r$ is defined via
\begin{equation}\label{eq:psi}
\Psi_r(q_r)=[T_r], \ \text{where } (T_r)_{\alpha_{r-1},j_r,\dots,j_N}
=
\sum_{\alpha_r=1}^{D_r}
(M^r_{j_r})_{\alpha_{r-1},\alpha_r}
(T_{r+1})_{\alpha_r,j_{r+1},\dots,j_N} \ ,
\end{equation}
with $q_r := (M^r,[T_{r+1}])\in Y_r$ and $M^r$ with the block structure in \eqref{eq:stiefelblocks}. The aforementioned sequence of tail varieties yielding $V_{D,d}^=$ can therefore be visualized via the sequence of one-step maps
\begin{equation*}
    X_N \overset{\Psi_{N-1}}{\longrightarrow} X_{N-1} \overset{\Psi_{N-2}}{\longrightarrow} \cdots \overset{\Psi_{2}}{\longrightarrow} X_2 \overset{\Psi_{1}}{\longrightarrow} X_1 = V_{D,d}^=\,.
\end{equation*}

The map takes the $(r+1)$-th projective tail variety $X_{r+1}$ and contracts it with the Stiefel manifold $\St(D_r,n_r)$ from the left. This is illustrated in Fig. \ref{fig:one_step_map}. In the following, we use the notation $M^r \cdot T_{r+1}$ to denote the tensor $T_r$ defined by \eqref{eq:psi}. 
The differential of $\Psi_r$ reads
\begin{equation*}
    D_{q_r}\Psi_r(\dot M^r,[\dot T_{r+1}]) = \dot M^r \cdot  T_{r+1} + M^r \cdot \dot T_{r+1},
\end{equation*}
for $(\dot M^r,[\dot T_{r+1}]) \in T_{q_r}Y_r$.
\begin{proposition}\label{prop:Psi_r_submersion}
For every \(r=1,\dots,N-1\), the one-step map $\Psi_r$ is a smooth surjective submersion.
\end{proposition}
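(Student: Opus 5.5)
Let me write $M := M^r$ and $T := T_{r+1}$. The proof has three parts: the map is well defined with image in $X_r$, it is surjective, and its differential is surjective. Smoothness is immediate: $(M,T)\mapsto M\cdot T$ is bilinear, and it is compatible with the scaling $T\mapsto\lambda T$, so it descends to a smooth map on $\St(D_r,n_r)\times X_{r+1}$ once we know $M\cdot T\neq 0$.

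\textbf{Well-definedness and image in $X_r$.} For $s\ge r$, the flattening of $T_r = M\cdot T$ factors as
\[
T_r^{(s)} = (M\otimes \mathrm{id})\circ T^{(s)}.
\]
Here $M$ is viewed as the injective map $\C^{D_r}\to\C^{D_{r-1}}\otimes\C^{d_r}$ given by its orthonormal columns. Composing with the injective map $M\otimes\mathrm{id}$ preserves rank, so $\rank T_r^{(s)}=\rank T^{(s)}=D_s$ for all $s\ge r$. For $s=r-1$ we need $\rank T_r^{(r-1)}=D_{r-1}$. We have $T_r^{(r-1)} = (\text{reshape of }M)\circ T^{(r)}$. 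Since $T^{(r)}:(\cdots)^*\to\C^{D_r}$ has rank $D_r$, it is surjective. Hence $\rank T_r^{(r-1)}$ equals the rank of $M$ reshaped as a $D_{r-1}\times d_rD_r$ matrix.

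This exposes a gap in the statement: that reshaped rank can drop below $D_{r-1}$ for special $M$. In that case $\Psi_r$ does not land in $X_r$ as defined. I would resolve it by restricting to the open dense subset of $\St(D_r,n_r)$ where this rank equals $D_{r-1}$, which is nonempty by admissibility $D_{r-1}\le d_rD_r$. Alternatively one can note that the rank condition at $s=r-1$ is automatic on the relevant locus (for $r=1$ it is trivial, since $D_0=1$). This is the main subtlety I expect, and I would add a remark to that effect. Nonvanishing of $M\cdot T$ follows from the injectivity above.

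\textbf{Surjectivity.} Take $[T_r]\in X_r$. The flattening $T_r^{(r)}$ has rank $D_r$, so its column space $W\subset\C^{D_{r-1}}\otimes\C^{d_r}$ has dimension $D_r$. Pick an orthonormal basis of $W$ as the columns of $M\in\St(D_r,n_r)$, and set $T:=M^*\cdot T_r$. Then $M M^*$ is the orthogonal projector onto $W$, so $M\cdot T = MM^*T_r = T_r$. By the same injectivity argument, $T$ has all ranks $\rank T^{(s)}=\rank T_r^{(s)}=D_s$ for $s\ge r$. Therefore $[T]\in X_{r+1}$ and $\Psi_r(M,[T])=[T_r]$.

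\textbf{Submersion.} The map $\Psi_r$ is equivariant: $\Psi_r(MU,[U^*T])=\Psi_r(M,[T])$ for $U\in\U(D_r)$. So the fibers contain the $\U(D_r)$-orbits, which have dimension $D_r^2$. Conversely, the fiber is exactly one orbit, because $M$ is determined by $T_r$ up to a unitary change of basis of the column space $W$, and then $T=M^*T_r$. Comparing dimensions with \eqref{eq:stiefel_dim} and \eqref{eq:X_r_dimension}:
\[
\dim_\R Y_r-\dim_\R X_r = (2n_rD_r-D_r^2)+2\dim_\C X_{r+1}-2\dim_\C X_r = D_r^2 .
\]
This equals the fiber dimension, so the expected rank of the differential is $\dim_{\R} X_r$.

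To check that the differential actually achieves this rank, I would verify directly that $D_{q_r}\Psi_r$ is onto. A tangent vector to $X_r$ at $[T_r]$ can be written as $\dot T_r$. Decompose it using the projector $P = MM^*$ as $\dot T_r = P\dot T_r + (I-P)\dot T_r$. The term $P\dot T_r = M\cdot(M^*\dot T_r)$ lies in the image of $M\cdot\dot T$. For the term $(I-P)\dot T_r$, I would use that $T^{(r)}$ is surjective onto $\C^{D_r}$: choose a right inverse $R$ of $T^{(r)}$ and set $\dot M = (I-P)\,\dot T_r^{(r)}R$. This lies in $T_M\St$, since $M^*\dot M=0$, and it satisfies $\dot M\cdot T=(I-P)\dot T_r$.

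The one point still to check is that $M^*\dot T_r$ is tangent to $X_{r+1}$. I would handle this with the same local section used for surjectivity, $[T_r]\mapsto(M(T_r),[M(T_r)^*T_r])$, taking $M(T_r)$ to be a smooth local orthonormal frame of the column space of $T_r^{(r)}$. This section is a smooth right inverse of $\Psi_r$, which gives the submersion property cleanly and avoids the tangent-space bookkeeping entirely.
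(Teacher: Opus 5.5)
Your proof is correct, and the surjectivity step is the paper's: an orthonormal basis of the column space of $T_r^{(r)}$ is exactly the left-canonical factor $M^r$ that the paper obtains from the SVD. The other two steps differ from the paper, and in both you are more careful than it is.

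\textbf{Well-definedness.} Your check exposes a genuine defect of the statement as written, which the paper's proof overlooks: for $r\ge 2$ the map need not land in $X_r$. Take $D_{r-1}=d_r=D_r=2$, as at $r=2$ in the paper's own $N=4$ example. The $2\times 2$ matrix units $M^r_1=E_{11}$, $M^r_2=E_{12}$ define an $M^r\in\St(2,4)$ for which $T_r^{(r-1)}$ has a zero row for every $[T_{r+1}]$.

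Restricting to the open dense set of $M^r$ whose $D_{r-1}\times d_rD_r$ reshape has full rank is the right repair. The removed set is null, so it does not affect the later coarea integrals, and the action $M^r\mapsto M^rU$ preserves the restricted set. Surjectivity survives as well: by your identity $\rank T_r^{(r-1)}=\rank(\text{reshape of }M^r)$, the frame you build from $[T_r]\in X_r$ automatically lies in this set. Your suggested alternative, that the condition is ``automatic on the relevant locus'', is false for $r\ge 2$ and should be dropped.

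\textbf{Submersion.} The paper asserts that $\ker D_{q_r}\Psi_r$ is the tangent space of the fiber and concludes by rank--nullity. Only the inclusion of the orbit directions in the kernel is automatic; equality is precisely the claim. It can be checked in a few lines: if $\dot M\cdot T+M\cdot\dot T\in\C\,T_r$, applying $I-MM^*$ and using that $T^{(r)}$ has full row rank gives $\dot M=MX$ with $X\in\mathfrak u(D_r)$, and then $\dot T\equiv -XT$.

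You rightly do not stop at the dimension count. Your local section proves surjectivity of the differential directly, without using the dimension formula for $X_r$, and it even exhibits $\Psi_r$ locally as a trivial principal $\U(D_r)$-bundle. Two details should be made explicit.
\begin{itemize}
\item A right inverse $\sigma$ only gives surjectivity of $D\Psi_r$ on $\sigma(V)$. Either choose the frame through the given point, e.g.\ the polar factor of $P(T_r)M_0$ with $P(T_r)$ the orthogonal projector onto the column space of $T_r^{(r)}$. Or transport surjectivity along the fiber using the equivariance and single-orbit property you proved.
\item In your tangent-vector route, $\dot M\cdot T=(I-P)\dot T_r$ does not follow from $T^{(r)}R=I$ alone, since $RT^{(r)}\neq I$. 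It uses that $(I-P)\dot T_r^{(r)}$ vanishes on $\ker T^{(r)}$, i.e.\ that $\dot T_r$ is tangent to the locus $\{\rank T_r^{(r)}=D_r\}$.
\end{itemize}
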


\begin{proof}
Smoothness follows from the polynomiality of the map. Surjectivity follows from the left-canonical tensor train decomposition \cite{Schollwoeck2011}:
given \([T_r]\in X_r\), the flattening $T_r^{(r)}$ has rank \(D_r\). Using a singular value decomposition we may factor it as $T_r^{(r)}=M^rB$ for some $M^r \in {\rm St}(D_r,n_r)$ and some $D_r \times (d_{r+1}\dots d_N)$ complex matrix $B$. Since $[T_r] \in X_r$, $B$ is the $r$-th flattening $T_{r+1}^{(r)}$ of some $[T_{r+1}]\in X_{r+1}$. It is straightforward to check that $\Psi_r(M^r,[T_{r+1}])=[T_r]$, thus $\Psi_r$ is surjective. \\
To prove surjectivity of the differential, consider the fiber through a point $(M^r,[T_{r+1}])\in Y_r$. This is the smooth free $\U(D_r)$-orbit
\begin{equation}\label{eq:fiber}
\Psi_r^{-1}(\Psi_r(M^r,[T_{r+1}])) =\left\{(M^rU,[U^{-1}T_{r+1}]): U\in \U(D_r)\right\} \, .
\end{equation}
It is diffeomorphic to $\U(D_r)$. The kernel of the differential of $\Psi_r$ at $(M^r,[T_{r+1}])$ is the tangent space to the fiber \eqref{eq:fiber}. It has real dimension $D_r^2$. The complex Stiefel manifold $\St(D_r, n_r)$ has real dimension $2n_rD_r - D_r^2$ (see \eqref{eq:stiefel_dim}). The claim thus follows from \eqref{eq:X_r_dimension}. 
\end{proof}
\begin{figure}[H]
\centering
\begin{subfigure}[t]{\textwidth}
\centering

\begin{tikzpicture}[
  x=1cm,
  y=1cm,
  scale=0.82,
  transform shape,
  tensor/.append style={
    minimum size=10mm,
    text width=12mm
  }
]


\def\s{1.2}
\def\l{1}

\node[tensor=tnlightblue] (Mr) at (0,0) {$M^r$};

\draw[leg] (Mr) -- ++(0,1)
node[lab,above] {$d_r$};

\draw[leg] (Mr) -- ++(-\s,0)
node[lab,above] {$D_{r-1}$};

\draw[leg] (Mr) -- ++(+\s,0)
node[lab,above] {$D_{r}$};


\node[font=\Large] at (2.1,0) {$\times$};


\begin{scope}[xshift=4.3cm]

\node[tensor=tnlightblue] (Mrp) at (0*\l,0) {$M^{r+1}$};
\node[tensor=tnlightblue] (MNm) at (2.6*\l,0) {$M^{N-1}$};
\node[tensor=tnpurple]    (C)   at (4.8*\l,0) {$C$};


\draw[leg] (Mrp) -- ++(-\s,0)
node[lab,above] {$D_{r}$};


\draw[leg] (Mrp) -- ++(0,1)
node[lab,above] {$d_{r+1}$};

\draw[leg] (MNm) -- ++(0,1)
node[lab,above] {$d_{N-1}$};

\draw[leg] (C) -- ++(0,1)
node[lab,above] {$d_N$};


\draw[leg] (Mrp) -- ++(\l,0);

\node at (1.3*\l,0) {$\cdots$};

\draw[leg] (MNm) -- ++(-\l,0);

\draw[leg] (MNm)
-- node[lab,above] {$D_{N-1}$}
(C);

\end{scope}

\end{tikzpicture}

\label{fig:tail_variety_factorization}

\end{subfigure}
\vfill

\begin{subfigure}[t]{0.58\textwidth}
\centering
\begin{tikzpicture}[
  x=1cm,y=1cm,
  scale=0.82,
  transform shape,
  tensor/.append style={minimum size=10mm,text width=12mm}
]

\def\s{1.2}
\def\l{1}

\node[font=\Large] at (0,0) {$\longrightarrow$};

\node[tensor=tnlightblue] (Mr)  at (2.5*\l,0) {$M^r$};
\node[tensor=tnlightblue] (Mrp) at (4.7*\l,0) {$M^{r+1}$};
\node[tensor=tnlightblue] (MNm) at (7.3*\l,0) {$M^{N-1}$};
\node[tensor=tnpurple]    (C)   at (9.5*\l,0) {$C$};

\draw[leg] (Mr) -- ++(-\s,0) node[lab,above] {$D_{r-1}$};

\draw[leg] (Mr)  -- ++(0,1) node[lab,above] {$d_r$};
\draw[leg] (Mrp) -- ++(0,1) node[lab,above] {$d_{r+1}$};
\draw[leg] (MNm) -- ++(0,1) node[lab,above] {$d_{N-1}$};
\draw[leg] (C)   -- ++(0,1) node[lab,above] {$d_N$};

\draw[leg] (Mr) -- node[lab,above] {$D_r$} (Mrp);

\draw[leg] (Mrp) -- ++(\l,0);
\node at (6*\l,0) {$\cdots$};
\draw[leg] (MNm) -- ++(-\l,0);

\draw[leg] (MNm) -- node[lab,above] {$D_{N-1}$} (C);

\end{tikzpicture}

\label{fig:tail_variety_Xr}
\end{subfigure}

\caption{Tensor-network representation of the one-step map $\Psi_r:
\St(D_r,n_r)\times X_{r+1}
\longrightarrow X_r$.}
\label{fig:one_step_map}

\end{figure}

\section{Metric properties of one-step maps}\label{sec:NJ_fiber_volume}

As explained before, we want to apply the smooth coarea formula \eqref{eq:coarea_general_clean} to the one-step maps $\Psi_r$. We therefore need to establish explicit forms for the volume of their fibers and for their normal Jacobian.

\subsection{Volume of the fibers and pullback metric}
We want to derive a formula for the volume of the fibers of $\Psi_r$. The fiber through the point $q_r=(M^r,[T_{r+1}]) \in Y_r$ is parametrized via the smooth free $\U(D_r)$-action
\begin{align*}
\alpha_{q_r}:\U(D_r)&\longrightarrow  \Psi_r^{-1}(\Psi_r(q_r)) \, ,\\
U\quad &\longmapsto (M^rU,[U^{-1}T_{r+1}])\, .
\end{align*}
Its infinitesimal action on the tail variety factor $X_{r+1}$, i.e., the left-action on the tensor $[T_{r+1}]$, is encoded by the linear map $B_r:\mathfrak u(D_r)\longrightarrow W_r $ given by 
\begin{align}\label{eq:B_r_definition}
    B_r(X) :=  \frac{d}{dt}\bigg|_{t=0}
[e^{-tX}T_{r+1}]\,,
\end{align}
with $W_r := T_{[T_{r+1}]}X_{r+1}$ and $X\in T_{I}\U(D_r) = \mathfrak{u}(D_r)$. Let $B_r^*:W_r\to \mathfrak u(D_r)$ denote the adjoint map of \(B_r\). Given $U \in \U(D_r)$, if $D_U\alpha_{q_r}(\eta)=0$ for some $\eta \in T_U\U(D_r)$, this implies $M^r\eta=0$ for the right-action on the Stiefel argument. Since $M^r \in \St(D_r,n_r)$, we find $(M^r)^*M^r\eta = \eta =0$ and we conclude that $\alpha_{q_r}$ is a smooth immersion.
\\~\\
Since $\alpha_{q_r}$ parametrizes the fiber of $\Psi_r(q_r)$, we have that $D_I\alpha_{q_r}(\mathfrak u(D_r))
\subseteq
\ker(D_{q_r}\Psi_r)$. We denote $\mathcal K_r := \ker(D_{q_r}\Psi_r)$. 
Furthermore, $\dim_{\mathbb R}D_I\alpha_{q_r}(\mathfrak u(D_r))
=
\dim_{\mathbb R}\mathfrak u(D_r)
=
D_r^2$, since $D_I\alpha_{q_r}$ is injective. Since $\dim_{\mathbb R}\mathcal K_r=D_r^2$ (see \eqref{eq:fiber}) it follows that $D_I\alpha_{q_r}(\mathfrak u(D_r)) 
=
\mathcal K_r$. Consequently, for any orthonormal real basis
$\{X_a\}_{a=1}^{D_r^2}$ of $\mathfrak u(D_r)$, the vectors $D_I\alpha_{q_r}(X_a)
=
(M^rX_a,B_r(X_a))$
form a basis of $\mathcal K_r$. \\

 On the fiber the metric is induced by the product metric $g_r = g_F \times g_{FS}$ (see Section \ref{sec:preliminaries}). From the description of the fiber we have that $\alpha_{q_r}$ is a diffeomorphism, but it is not an isometry. Hence, in order to compute the volume of $\psi_r^{-1}(\psi_r(q_r))$, we need to understand how the product metric pulls back to $\U(D_r)$, i.e., we need to compute $\alpha_{q_r}^*g_r$.
\begin{proposition}\label{prop:fiber_volume_Psi_r}
The volume of the fiber of \(\Psi_r\) through $q_r=(M^r,[T_{r+1}]) \in Y_r$ is
\[
\vol\bigl(\Psi_r^{-1}(\Psi_r(q_r))\bigr)
=
\vol(\U(D_r))\,\sqrt{\det(I+B_r^*B_r)}. 
\]
\end{proposition}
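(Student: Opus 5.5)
We compute the volume of the fiber through the pullback metric $\alpha_{q_r}^*g_r$ on $\U(D_r)$, and show that this metric is left-invariant. Its volume is then the volume of $\U(D_r)$ for the Frobenius metric $g_F$, scaled by a constant Jacobian factor. Since $\alpha_{q_r}$ is an injective immersion onto the fiber, and the fiber is compact and diffeomorphic to $\U(D_r)$, we have
\[
\vol\bigl(\Psi_r^{-1}(\Psi_r(q_r))\bigr)=\int_{\U(D_r)}\sqrt{\det G(U)}\,dU ,
\]
where $G(U)$ is the Gram matrix of $\alpha_{q_r}^*g_r$ at $U$. It is taken in a $g_F$-orthonormal basis of $T_U\U(D_r)$, for which we use the left translates $\{UX_a\}$ of an orthonormal basis $\{X_a\}$ of $\mathfrak u(D_r)$.

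\textbf{Step 1: the Gram matrix at the identity.} We have $D_I\alpha_{q_r}(X)=(M^rX,B_r(X))$, so
\[
g_r\bigl(D_I\alpha_{q_r}(X),D_I\alpha_{q_r}(Y)\bigr)={\rm Re}\,{\rm tr}\bigl(X^*(M^r)^*M^rY\bigr)+\langle B_rX,B_rY\rangle_{FS}.
\]
Because $(M^r)^*M^r=I$, the first term is $\langle X,Y\rangle_F$. The second term is $\langle X,B_r^*B_rY\rangle$ by definition of the adjoint. Therefore $G(I)=I+B_r^*B_r$, where $B_r^*B_r$ is written in the basis $\{X_a\}$.

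\textbf{Step 2: left-invariance.} Write $q_r'=\alpha_{q_r}(U)=(M^rU,[U^{-1}T_{r+1}])$. Then $\alpha_{q_r}(UV)=\alpha_{q_r'}(V)$, so $D_U\alpha_{q_r}(UX)=D_I\alpha_{q_r'}(X)$, and $G(U)$ is the identity Gram matrix computed at $q_r'$ instead of $q_r$. It remains to show that $B_r^*B_r$ at $q_r'$ is unitarily conjugate to $B_r^*B_r$ at $q_r$.

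This is the step needing the most care. The infinitesimal action at $[U^{-1}T_{r+1}]$ is
\[
X\mapsto \frac{d}{dt}\Big|_{t=0}\bigl[e^{-tX}U^{-1}T_{r+1}\bigr]=\frac{d}{dt}\Big|_{t=0}\bigl[U^{-1}e^{-tUXU^{-1}}T_{r+1}\bigr].
\]
Hence $B_r'=L_{U^{-1}}\circ B_r\circ \mathrm{Ad}_U$, where $L_{U^{-1}}$ is the differential of the action of $U^{-1}$ on $\P(\C^{D_r}\otimes\cdots)$. The map $L_{U^{-1}}$ is a Fubini–Study isometry, because $U^{-1}$ acts unitarily on the first tensor factor. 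It maps $X_{r+1}$ to itself, since the action preserves all flattening ranks. The map $\mathrm{Ad}_U$ is a $g_F$-isometry of $\mathfrak u(D_r)$.

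It follows that $B_r'^*B_r'=\mathrm{Ad}_U^{-1}B_r^*B_r\,\mathrm{Ad}_U$, so $\det G(U)=\det(I+B_r^*B_r)$ for every $U$. One should also check that the Fubini–Study normalization in \eqref{eq:FS_product} is compatible: $\|U^{-1}T_{r+1}\|=\|T_{r+1}\|$, so it is.

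\textbf{Step 3: conclusion.} The integrand is constant, so
\[
\vol\bigl(\Psi_r^{-1}(\Psi_r(q_r))\bigr)=\sqrt{\det(I+B_r^*B_r)}\,\int_{\U(D_r)}dU=\vol(\U(D_r))\sqrt{\det(I+B_r^*B_r)},
\]
where $\vol(\U(D_r))$ is taken with respect to $g_F$, consistent with \eqref{eq:vol_gr}.
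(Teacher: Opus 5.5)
Your proof is correct and follows the same route as the paper: you compute the Gram matrix of $\alpha_{q_r}^*g_r$ at the identity as $I+B_r^*B_r$ and then show the density is constant over $\U(D_r)$. Your equivariance argument, $\alpha_{q_r}(UV)=\alpha_{q_r'}(V)$ with $B_r'=L_{U^{-1}}\circ B_r\circ \mathrm{Ad}_U$, actually proves this constancy, which the paper only asserts. One small correction: your computation gives $G(U)=\mathrm{Ad}_U^{-1}G(I)\,\mathrm{Ad}_U$, so the pulled-back metric is right-invariant, as the paper says, not left-invariant; only $\det G(U)$ is unchanged under left translation, and that is all you need.
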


\begin{proof}
Let $\{X_a\}_{a=1}^{D_r^2}$ be an orthonormal basis of \(\mathfrak u(D_r)\). A basis for the tangent space to the fiber is
\[
K_a:=D_I\alpha_{q_r}(X_a)
=
\bigl(M^rX_a,\;B_r(X_a)\bigr) ,
\qquad a=1,\dots,D_r^2.
\]
 The pullback metric through the smooth immersion $\alpha_{q_r}$ is given by
\begin{equation*}
    \alpha_{q_r}^*g_r(X_a, X_b) = g_r(D_I\alpha_{q_r}(X_a), D_I\alpha_{q_r}(X_b)) =: H^{(r)}_{ab}, \quad X_a, X_b \in \mathfrak u(D_r).
\end{equation*}
Using the product metric and the fact that \(M^r\) has orthonormal columns, we get 
\begin{align*}
H^{(r)}_{ab}
&=
\langle M^rX_a,M^rX_b\rangle_{F}
+
\langle B_r(X_a),B_r(X_b)\rangle_{FS} \\
&=
\langle X_a,X_b\rangle_{F}
+
\langle B_r(X_a),B_r(X_b)\rangle_{FS} \\
&=
\delta_{ab}
+
\langle B_r^*B_r(X_a),X_b\rangle_{F}.
\end{align*}
Thus, in the orthonormal basis $\{X_a\}_{a=1}^{D_r^2}$, the Gram matrix \(H^{(r)}\) is the matrix of
the operator
\[
I+B_r^*B_r:\mathfrak u(D_r)\to \mathfrak u(D_r).
\]
The induced Riemannian volume density on the fiber
relative to the normalized Haar density on \(\U(D_r)\) is therefore
\[
\sqrt{\det(I+B_r^*B_r)}.
\]
Since this ratio is constant on \(\U(D_r)\) (both the reference metric on $\mathfrak u(D_r)$ and the induced metric are right-invariant), the total fiber volume
is
\[
\vol\bigl(\Psi_r^{-1}(\Psi_r(q_r))\bigr)
=
\vol\bigl(\U(D_r)\bigr)\,\sqrt{\det(I+B_r^*B_r)}.
\]
\end{proof}
\noindent
Proposition \ref{prop:fiber_volume_Psi_r} implies that the following function $J_r : X_r \rightarrow \R$ is well-defined: 
\begin{equation}\label{eq:J_r_definition}
J_r([T_{r}])
:=
\sqrt{\det(I+B_r^*B_r)}
\end{equation}
for $[T_r] = \Psi_r(M^r, [T_{r+1}])$. We will make use of this in the next section.

\subsection{Normal Jacobian of one-step maps}
To compute the normal Jacobian of $\Psi_r$ at a point $q_r \in Y_r$, we first need to determine a basis of the orthogonal complement of the kernel of $D_{q_r}\Psi_r$, denoted by $\mathcal K_r^{\perp}$. \\

Let
\(\{z_i\}_{i=1}^{s}\) be an orthonormal basis of \(W_r 
=T_{[T_{r+1}]}X_{r+1}\) with $s:=\dim(X_{r+1})$ and define $h_i:=\bigl(-M^rB_r^*z_i,z_i\bigr)$ for $i=1, ..., s$.
We claim that each \(h_i\) is orthogonal to the kernel elements \((M^rX,B_r(X)) \in \mathcal K_r\) for any $X\in\mathfrak u(D_r)$. Indeed, using the adjoint relation $\langle X,B_r^*z_i\rangle_{F}
=
\langle B_r(X),z_i\rangle_{FS}$, we have
\begin{align*}
\big\langle h_i,(M^rX,B_r(X))\big\rangle
&=
\big\langle -M^rB_r^*z_i,M^rX\big\rangle_{F}
+
\big\langle z_i,B_r(X)\big\rangle_{FS} \\
&=
-\big\langle B_r^*z_i,X\big\rangle_{F}
+
\big\langle z_i,B_r(X)\big\rangle_{FS}= 0.
\end{align*}
Since $\{h_i\}_{i=1}^{s}$ is not an orthonormal set, we introduce its Gram matrix $G=(G_{ij})_{i,j=1}^s$\,, where
\begin{align*}
G_{ij} :=\langle h_i,h_j\rangle
&=
\big\langle -M^rB_r^*z_i,-M^rB_r^*z_j\big\rangle_{F}
+
\langle z_i,z_j\rangle_{FS} \\
&=
\langle B_r^*z_i,B_r^*z_j\rangle_{F}
+
\delta_{ij} \\
&=
\langle B_rB_r^*z_i,z_j\rangle_{FS}
+
\delta_{ij}.
\end{align*}
Thus, in the orthonormal basis \(\{z_i\}_{i=1}^{s}\), the Gram matrix \(G\) is the matrix of the operator
\[
I+B_rB_r^*:W_r\to W_r\,.
\]
To compute the normal Jacobian restricted to ${\rm span}\{h_1,\dots,h_s\}$ we need to evaluate the Gram matrix of the differential images $\{D_{q_r}\Psi_r(h_i)\}_{i=1}^s$. We denote it by $R=(R_{ij})_{i,j=1}^s$. By linearity of the differential,
\begin{equation}\label{eq:diff_lin}
    D_{q_r}\Psi_r(h_i)
=
D_{q_r}\Psi_r(-M^rB_r^*z_i,0)
+
D_{q_r}\Psi_r(0,z_i).
\end{equation}
To simplify the computation of $R$, we prove the following Lemma.
\begin{lemma}\label{lem:xi_M_isometric}
For fixed \(M^r\in \St(D_r,n_r)\), let $\xi_{M^r}:X_{r+1}\to X_r$ be defined by
\[
[T_{r+1}]\longmapsto [M^r \cdot T_{r+1}].
\]
Then \(\xi_{M^r}\) is an isometric immersion with respect to the induced
metrics on \(X_{r+1}\) and \(X_r\).
\end{lemma}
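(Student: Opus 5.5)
The plan is to show that the linear map underlying $\xi_{M^r}$ is an isometry of the ambient Hermitian spaces. Then both the projective structure and the Fubini--Study metric are preserved. Let
\[
L_{M^r}:\C^{D_r}\otimes\C^{d_{r+1}}\otimes\cdots\otimes\C^{d_N}\longrightarrow \C^{D_{r-1}}\otimes\C^{d_r}\otimes\cdots\otimes\C^{d_N},\qquad T_{r+1}\longmapsto M^r\cdot T_{r+1}.
\]
Flattening $T_{r+1}$ as a $D_r\times(d_{r+1}\cdots d_N)$ matrix $B$, the $r$-th flattening of $M^r\cdot T_{r+1}$ is exactly the matrix product $M^rB$ of size $n_r\times(d_{r+1}\cdots d_N)$. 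This is just a regrouping of indices in \eqref{eq:psi}, using the block structure \eqref{eq:stiefelblocks}. Since $(M^r)^*M^r=I_{D_r}$, we get
\[
\langle M^rB,M^rB'\rangle=\operatorname{tr}(B^*(M^r)^*M^rB')=\langle B,B'\rangle
\]
for the standard Hermitian products, and flattenings preserve these products. Hence $L_{M^r}$ is a linear isometric embedding, in particular injective.

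Next, $\xi_{M^r}$ is well defined and lands in $X_r$. Well-definedness follows from linearity and injectivity, so $[L_{M^r}T]$ makes sense for $[T]\in\P$. That the image lies in $X_r$ is just $\xi_{M^r}([T_{r+1}])=\Psi_r(M^r,[T_{r+1}])$, with $\Psi_r$ mapping into $X_r$ by construction and Proposition~\ref{prop:Psi_r_submersion}. Smoothness is clear.

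For the metric, fix a representative $T_{r+1}$ and identify $T_{[T_{r+1}]}\P$ with $T_{r+1}^{\perp}$ as in \eqref{eq:FS_product}. The differential of $\xi_{M^r}$ sends a horizontal vector $v\in T_{r+1}^\perp$ to $L_{M^r}v$, viewed in $T_{[M^r\cdot T_{r+1}]}\P$. The key check is that $L_{M^r}v$ is again horizontal: $\langle L_{M^r}T_{r+1},L_{M^r}v\rangle=\langle T_{r+1},v\rangle=0$, so no projection onto the orthogonal complement is needed. Then, using $\|L_{M^r}T_{r+1}\|=\|T_{r+1}\|$,
\[
\langle D\xi(v),D\xi(w)\rangle_{FS}=\frac{\mathrm{Re}\langle L_{M^r}v,L_{M^r}w\rangle}{\|L_{M^r}T_{r+1}\|}=\frac{\mathrm{Re}\langle v,w\rangle}{\|T_{r+1}\|}=\langle v,w\rangle_{FS}.
\]
So $\xi_{M^r}$ is an isometric embedding $\P(\C^{D_r}\otimes\cdots)\to\P(\C^{D_{r-1}}\otimes\cdots)$ for the Fubini--Study metrics. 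In particular its differential is injective, which gives the immersion property.

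Finally, restrict to $X_{r+1}$. The metrics on $X_{r+1}$ and $X_r$ are the ones induced from the ambient Fubini--Study metrics. The restriction of an ambient isometric immersion to a submanifold, landing in the submanifold $X_r$, is therefore an isometric immersion for the induced metrics. The only mildly delicate point is this horizontality and normalization bookkeeping in the Fubini--Study identification; the orthonormality of the columns of $M^r$ handles it completely.
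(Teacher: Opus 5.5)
Your proof is correct and uses the same argument as the paper. Since $(M^r)^*M^r=I$, the contraction $T\mapsto M^r\cdot T$ is a linear isometry, so $D\xi_{M^r}$ preserves Fubini--Study inner products. Your checks that $M^r\cdot v\perp M^r\cdot T_{r+1}$ and $\|M^r\cdot T_{r+1}\|=\|T_{r+1}\|$ make explicit what the paper's one-line computation $\langle M^r\cdot Z,M^r\cdot W\rangle_{FS}=\langle Z,W\rangle_{FS}$ takes for granted.

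One side remark concerns your claim that the image lies in $X_r$ because $\Psi_r$ maps into $X_r$ ``by construction''. This fails for some $M^r$. The ranks $\operatorname{rank}T_r^{(s)}=D_s$ for $s\ge r$ are preserved by the isometry. However, $T_r^{(r-1)}=[M^r_1\;\cdots\;M^r_{d_r}]\,(I_{d_r}\otimes T_{r+1}^{(r)})$ has rank equal to $\operatorname{rank}[M^r_1\;\cdots\;M^r_{d_r}]$, which can be smaller than $D_{r-1}$. For example, $D=(1,2,1,1)$, $d=(2,2,2)$, $r=2$, $M^2=e_1\in\St(1,4)$ gives rank $1<D_1=2$.

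This imprecision comes from the paper's own setup, and Proposition~\ref{prop:Psi_r_submersion} only gives surjectivity, not containment. It is harmless: such $M^r$ form a null set, and the isometry property holds for the map into the ambient projective space regardless. That isometry property is all Proposition~\ref{prop:NJ_rest_L} uses.
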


\begin{proof}
Let \([T_{r+1}]\in X_{r+1}\) and choose the normalized representative \(T_{r+1}\) with \(\|T_{r+1}\|_{FS}=1\).
Let \(Z,W\in T_{[T_{r+1}]}X_{r+1}\). Then
\[
\langle D_{[T_{r+1}]}\xi_{M^r}(Z), D_{[T_{r+1}]}\xi_{M^r}(W)\rangle_{FS} = \langle M^r\cdot Z, M^r \cdot W\rangle_{FS} = \langle Z,W\rangle_{FS}, 
\]
hence the conclusion follows.
\end{proof}
\noindent
We now use Lemma \ref{lem:xi_M_isometric} to establish the form of the normal Jacobian restricted to $L:={\rm span}\{h_1,\dots,h_s\}$.
\begin{proposition}\label{prop:NJ_rest_L}
    The normal Jacobian of the differential restricted to $L:={\rm span}\{h_1,\dots,h_s\}$ is
\[
\NJ\left(\Psi_r, q_r\right)|_L
=
\sqrt{\frac{\det R}{\det G}} = \sqrt{\det(I+B_rB_r^*)} = J_r(\Psi_r(M^r,[T_{r+1}])\,.
\]
\end{proposition}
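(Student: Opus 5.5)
The plan is to compute both Gram matrices $G$ and $R$ as matrices of explicit operators on $W_r$ in the orthonormal basis $\{z_i\}$, and then take the ratio of determinants as in \eqref{eq:NJ_any}. We already know that $G$ is the matrix of $I+B_rB_r^*$. The claim will therefore follow once we show that $R$ is the matrix of $(I+B_rB_r^*)^2$. Indeed, this gives
\[
\sqrt{\det R/\det G}=\sqrt{\det(I+B_rB_r^*)}.
\]
Sylvester's determinant identity $\det(I+B_rB_r^*)=\det(I+B_r^*B_r)$ then identifies this quantity with $J_r(\Psi_r(q_r))$ by \eqref{eq:J_r_definition}. Throughout we fix the normalized representative $\|T_{r+1}\|=1$. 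Since $M^r$ has orthonormal columns, $\|M^r\cdot T_{r+1}\|=1$ as well, and tangent vectors at $[T_{r+1}]$ and at $[T_r]$ are represented by vectors orthogonal to $T_{r+1}$ and to $T_r=M^r\cdot T_{r+1}$ respectively.

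\textbf{Step 1: the second summand of \eqref{eq:diff_lin}.} For the term $D_{q_r}\Psi_r(0,z_i)$, the differential in the $X_{r+1}$ direction at fixed $M^r$ is exactly $D\xi_{M^r}$. By Lemma \ref{lem:xi_M_isometric} this term equals $M^r\cdot z_i$, and the map $z\mapsto M^r\cdot z$ is an isometry from $W_r$ into $T_{[T_r]}X_r$.

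\textbf{Step 2: the Stiefel summand.} The term $D_{q_r}\Psi_r(-M^rB_r^*z_i,0)$ is the main point. Write $Y:=B_r^*z_i\in\mathfrak u(D_r)$. Then the ambient derivative is $-(M^rY)\cdot T_{r+1}=-M^r\cdot(YT_{r+1})$, where $YT_{r+1}$ denotes the left action on the first index. As a tangent vector to $\P$ at $[T_r]$, we must take its component orthogonal to $T_r$. Because $v\mapsto M^r\cdot v$ is an isometry, this component is $M^r\cdot\pi(-YT_{r+1})$, where $\pi$ is the orthogonal projection onto $T_{r+1}^\perp$. By the definition \eqref{eq:B_r_definition}, $B_r(Y)=\pi(-YT_{r+1})$, since the derivative of $[e^{-tY}T_{r+1}]$ in projective space is the projected ambient derivative. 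Hence this summand equals $M^r\cdot B_rB_r^*z_i$, and note that $B_rB_r^*z_i\in W_r$.

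\textbf{Step 3: assembling $R$.} Combining the two steps gives
\[
D_{q_r}\Psi_r(h_i)=M^r\cdot\bigl((I+B_rB_r^*)z_i\bigr).
\]
Applying the isometry property once more,
\[
R_{ij}=\bigl\langle (I+B_rB_r^*)z_i,(I+B_rB_r^*)z_j\bigr\rangle_{FS}.
\]
Since $I+B_rB_r^*$ is self-adjoint, $R$ is the matrix of $(I+B_rB_r^*)^2$, which is what we needed.

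\textbf{Main obstacle.} The delicate point is Step 2: one must check carefully that the projective (Fubini–Study) tangent identification matches the normalization of $B_r$, including signs and the real-part inner product. I would verify this directly from \eqref{eq:FS_product} with unit-norm representatives. The component of $YT_{r+1}$ along $T_{r+1}$ is $\langle T_{r+1},YT_{r+1}\rangle T_{r+1}$, which is purely imaginary because $Y$ is anti-Hermitian, so it is a phase direction that is killed in projective space. This confirms that projecting by $\pi$ is the correct operation.
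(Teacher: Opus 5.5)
Your proposal is correct and follows the paper's proof essentially step by step. Both arguments write $D_{q_r}\Psi_r(h_i)=M^r\cdot\bigl((I+B_rB_r^*)z_i\bigr)$, use that $z\mapsto M^r\cdot z$ is an isometry (Lemma \ref{lem:xi_M_isometric}) to get $R=(I+B_rB_r^*)^2$, and conclude via $\det(I+B_rB_r^*)=\det(I+B_r^*B_r)$. The only minor difference is in the Stiefel summand: the paper avoids your explicit projection onto $T_r^\perp$ by differentiating along the curve $t\mapsto(M^re^{-tB_r^*z_i},[T_{r+1}])$ and applying the chain rule through $\xi_{M^r}$, which yields $D\xi_{M^r}(B_rB_r^*z_i)$ directly from the definition of $B_r$.
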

\begin{proof}
    The second term of the differential in \eqref{eq:diff_lin} can be written as
\[
D_{q_r}\Psi_r(0,z_i)
=
M^r\cdot z_i = D_{[T_{r+1}]}\xi_{M^r}(z_i)\,.
\]
Consider the curve $\gamma(t):=\bigl(M^re^{-tB_r^*z_i},[T_{r+1}]\bigr)$ with initial velocity  $\dot\gamma(0)=(-M^rB_r^*z_i,0)$.
Then the first term in \eqref{eq:diff_lin} is given by
\begin{align*}
    D_{q_r}\Psi_r(-M^rB_r^*z_i,0)
&=
\frac{d}{dt}\bigg|_{t=0}
\Psi_r(M^re^{-tB_r^*z_i},[T_{r+1}])  \\
&=
\frac{d}{dt}\bigg|_{t=0}
[M^re^{-tB_r^*z_i}\cdot T_{r+1}]  \\
&=
\frac{d}{dt}\bigg|_{t=0}
\xi_{M^r}([e^{-tB_r^*z_i}T_{r+1}])  \\
&=
D_{[T_{r+1}]}\xi_{M^r}
\left(
\frac{d}{dt}\bigg|_{t=0}
[e^{-tB_r^*z_i}T_{r+1}]
\right)  \\
&=
D_{[T_{r+1}]}\xi_{M^r}\bigl(B_r(B_r^*z_i)\bigr).
\end{align*}
Hence, we conclude that $D_{q_r}\Psi_r(h_i)
=
D_{[T_{r+1}]}\xi_{M^r}\bigl((I+B_rB_r^*)z_i\bigr)$.
By Lemma \ref{lem:xi_M_isometric}, the map \(\xi_{M^r}\) is an isometric immersion, therefore the Gram matrix $R$ of the images is 
\[
R_{ij}:=
\big\langle D_{q_r}\Psi_r(h_i),D_{q_r}\Psi_r(h_j)\big\rangle_{FS} = \big\langle (I+B_rB_r^*)z_i,\,(I+B_rB_r^*)z_j\big\rangle_{F}.
\]
Thus, in the orthonormal basis \(\{z_i\}_{i=1}^s\), the Gram matrix \(R\) is the matrix $(I+B_rB_r^*)^2$. Using \eqref{eq:NJ_any}, the normal Jacobian of the differential restricted to $L:=\mathrm{span}_{\mathbb R}\{h_i\}_{i=1}^{s}$ is therefore given by
\[
\NJ\left(\Psi_r, q_r\right)|_L
=
\sqrt{\frac{\det R}{\det G}} = \sqrt{\det(I+B_rB_r^*)}\,.
\]
We use the standard identity $\det(I+B_rB_r^*)=\det(I+B_r^*B_r)$,
which holds because the operators \(B_rB_r^*\) and \(B_r^*B_r\) have the same nonzero eigenvalues.
Therefore we proved that
\[
\NJ\left(\Psi_r, q_r\right)|_L
=
J_r(\Psi_r(q_r))
= J_r(\Psi_r(M^r, [T_{r+1}])),
\]
where $J_r$ is the same factor \eqref{eq:J_r_definition} appearing in the fiber volume computation. 
\end{proof}
\noindent
However, the linearly independent vectors $\{h_i\}_{i=1}^{s}$ do not span the entire orthogonal complement to the kernel of $D_{{q_r}}\Psi_r$. We can extend $\{h_i\}_{i=1}^{s}$ to a basis as follows. Let $m_r := D_{r-1}d_r - D_r$ and $M^r_{\perp}\in \St(m_r,n_r)$ be an orthonormal frame orthogonal to $M^r$, that is, $(M^r_{\perp})^*M^r=0$.
\begin{proposition}\label{prop:basis_complement}
The orthogonal complement of the kernel of $D_{q_r}\Psi_r$ splits orthogonally as
\begin{equation}\label{eq:K_decomposition}
   \mathcal K_r^{\perp} := (\ker(D_q\Psi_r))^{\perp}
=
\underbrace{
\mathrm{span}_\R\{(M_\perp^rE_{ab},0),(iM_\perp^rE_{ab},0)\}_{a,b=1}^{m_r,D_r}
}_{=:F}
\ \oplus\
\underbrace{\mathrm{span}_{\mathbb R}\{h_i\}_{i=1}^{s}}_{=:L},
\end{equation}
with $E_{ab}$ the $m_r \times D_r$ matrix with a single entry $1$ at index $(a,b)$. 
\end{proposition}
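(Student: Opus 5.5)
The plan is a dimension count combined with three orthogonality checks. We work in $T_{q_r}Y_r = T_{M^r}\St(D_r,n_r)\times W_r$ with the product metric $g_F\times g_{FS}$.

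\textbf{Step 1: $F$ lies in the tangent space.} Recall that $T_{M^r}\St(D_r,n_r)=\{\dot M : (M^r)^*\dot M+\dot M^*M^r=0\}$. Every $\dot M = M^r_\perp A$ with $A\in\C^{m_r\times D_r}$ satisfies $(M^r)^*\dot M=0$, so $F\subset T_{q_r}Y_r$. The map $A\mapsto M^r_\perp A$ is an isometry for $\langle\cdot,\cdot\rangle_F$, because $(M^r_\perp)^*M^r_\perp=I$. Hence the listed vectors $(M^r_\perp E_{ab},0)$ and $(iM^r_\perp E_{ab},0)$ are orthonormal, and $\dim_\R F = 2m_rD_r$.

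\textbf{Step 2: $F$ is orthogonal to $\mathcal K_r$.} By the previous section, $\mathcal K_r=\{(M^rX,B_r(X)) : X\in\mathfrak u(D_r)\}$. For $(M^r_\perp A,0)\in F$ we compute
\[
\langle (M^r_\perp A,0),(M^rX,B_r(X))\rangle={\rm Re}\,{\rm tr}\bigl(A^*(M^r_\perp)^*M^rX\bigr)=0 .
\]
The orthogonality $L\perp\mathcal K_r$ was already verified above when the $h_i$ were introduced.

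\textbf{Step 3: $F$ is orthogonal to $L$.} We need $\langle (M^r_\perp A,0),(-M^rB_r^*z_i,z_i)\rangle = -{\rm Re}\,{\rm tr}\bigl(A^*(M^r_\perp)^*M^rB_r^*z_i\bigr)=0$, which vanishes for the same reason. Consequently $F\oplus L$ is an orthogonal direct sum contained in $\mathcal K_r^\perp$.

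\textbf{Step 4: dimension count.} The $h_i$ are linearly independent, since their second components $z_i$ are. Their Gram matrix $I+B_rB_r^*$ is positive definite, which gives the same conclusion. Thus $\dim_\R(F\oplus L)=2m_rD_r+s$, where $s=\dim_\R X_{r+1}$; here "dimension" means real dimension, since $\{z_i\}$ is a real orthonormal basis.

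It remains to check that this equals $\dim_\R\mathcal K_r^\perp=\dim_\R Y_r - D_r^2$. Using $\dim_\R \St(D_r,n_r)=2n_rD_r-D_r^2$, we get
\[
\dim_\R Y_r - D_r^2 = 2n_rD_r-2D_r^2+s = 2D_r(n_r-D_r)+s = 2D_rm_r+s .
\]
This matches, so equality holds and the decomposition follows.

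The only real obstacle is bookkeeping. One must check that the paper's $s$ is the real dimension of $W_r$, i.e. $2\dim_\C X_{r+1}$. One must also check that $m_r=n_r-D_r$ is nonnegative, which follows from admissibility, so that $M^r_\perp$ exists.
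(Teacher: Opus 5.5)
Your proposal is correct and follows the paper's proof: $F$ is shown orthogonal to $\mathcal K_r$ and to $L$ using $(M^r_\perp)^*M^r=0$, and the decomposition is then closed by the same real-dimension count $\dim_\R\mathcal K_r^\perp = s+2D_rm_r$. Your additional checks (that $F$ lies in $T_{M^r}\St(D_r,n_r)$, that the $h_i$ are independent, and that $s$ must be read as the real dimension of $W_r$) are points the paper leaves implicit, and they are handled correctly.
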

\begin{proof}
Since \(\{h_i\}_{i=1}^s\) are orthogonal to $\ker(D_q\Psi_r)$, we need to show that~$\{(M_\perp^rE_{ab},0),(iM_\perp^rE_{ab},0)\}_{a,b=1}^{m_r,D_r}$ are orthogonal to \(\{h_i\}_{i=1}^s\) and $\ker(D_q\Psi_r)$, and that the blocks $F,\,L$ together define $\dim_{\R}((\ker(D_q\Psi_r))^{\perp})$-many independent basis elements.
\\
First, the orthogonality $\{(M_\perp^rE_{ab},0),(iM_\perp^rE_{ab},0)\}_{a,b=1}^{m_r,D_r} \perp \{h_i\}_{i=1}^s$ follows from $(M^r_{\perp})^*M^r =0$. Similarly, orthogonality to the kernel $\mathcal K_r$ follows from
\begin{equation*}
    \langle (M_\perp^rE_{ab},0), (M^rX, B_r(X))\rangle = \langle M_\perp^rE_{ab}, M^rX\rangle_{F} + \langle 0, B_r(X) \rangle_{FS}=0,
\end{equation*}
for every $X \in \mathfrak u(D_r)$, $a=1,\dots,m_r$ and $b=1,\dots,D_r$.
For a TT signature $(D,d)$ we have $\dim_{\R}(\{(M_\perp^rE_{ab},0),(iM_\perp^rE_{ab},0)\})= 2m_rD_r$ and $\dim_{\R}(T_{M^r}\St(D_r,n_r)) = D_r^2 + 2D_rm_r$ (see \eqref{eq:stiefel_dim}). Then
\begin{align*}
    \dim_{\R}(\ker(D_{q_r}\Psi_r)^{\perp})&= \dim_{\R}(T_{M^r}\St(D_r,n_r)) + \dim_{\R}(T_{[T_{r+1}]}X_{r+1}) - \dim_{\R}(\ker(D_{q_r}\Psi_r)) \\
    &= (D_r^2 + 2D_rm_r) +s -D_r^2  \\
    &= s + 2D_rm_r,
\end{align*}
hence the proposed basis has the expected dimension. 
\end{proof}
\begin{corollary}\label{cor:NJ_Psi_r}
The normal Jacobian of the one-step map \(\Psi_r: Y_r \longrightarrow X_r\) is
\[
\NJ(\Psi_r,(M^r, [T_{r+1}]))
=
\det\bigl(\Sigma_{r+1}([T_{r+1}])\bigr)^{m_r}
\,J_r(\Psi_r(M^r, [T_{r+1}])),
\]
where we denote
\begin{equation}\label{eq:Sigma_r_definition}
\Sigma_r([T_r])
:=
T_r^{(r-1)}
(T_r^{(r-1)})^* \in \Herm_{>0}(D_{r-1}).
\end{equation}
\end{corollary}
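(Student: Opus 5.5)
We will use the orthogonal splitting $\mathcal K_r^{\perp}=F\oplus L$ of Proposition \ref{prop:basis_complement}. The idea is to show that, with respect to this splitting, $D_{q_r}\Psi_r$ maps $F$ and $L$ to mutually orthogonal subspaces of $T_{\Psi_r(q_r)}X_r$. The normal Jacobian then factors as the product of the two restricted normal Jacobians. The factor on $L$ is already $J_r(\Psi_r(q_r))$ by Proposition \ref{prop:NJ_rest_L}, so the remaining work is the factor on $F$. We expect that one to be $\det(\Sigma_{r+1}([T_{r+1}]))^{m_r}$.

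First, the images of $F$. For $(M^r_\perp A,0)$ with $A\in\C^{m_r\times D_r}$, the differential gives $M^r_\perp A\cdot T_{r+1}$. We work with the unit-norm representative $T_{r+1}$ and write $T_{r+1}^{(r)}$ for its flattening, a $D_r\times(d_{r+1}\cdots d_N)$ matrix. The $r$-th flattening of this image is then $M^r_\perp A\,T^{(r)}_{r+1}$. To check orthogonality of the images we also need the image of $h_i$. By the proof of Proposition \ref{prop:NJ_rest_L}, it equals $M^r\cdot w$ with $w=(I+B_rB_r^*)z_i$, and its flattening is $M^r w^{(r)}$. The Frobenius product of the two flattenings is $\mathrm{Re}\,\mathrm{tr}\bigl((w^{(r)})^*(M^r)^*M^r_\perp A T^{(r)}_{r+1}\bigr)=0$, because $(M^r)^*M^r_\perp=0$.

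We also need the images of $F$ to be tangent vectors orthogonal to the representative $M^r\cdot T_{r+1}$, so that the Fubini--Study metric \eqref{eq:FS_product} is just the Frobenius product. This holds by the same identity $(M^r)^*M^r_\perp=0$, and by Lemma \ref{lem:xi_M_isometric} the representative $M^r\cdot T_{r+1}$ has norm $1$. So the images of $F$ and of $L$ are orthogonal, and the Gram matrix of the full image basis is block diagonal.

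Next, the Gram matrix on $F$. The basis of $F$ listed in Proposition \ref{prop:basis_complement} is orthonormal, since $M^r_\perp$ has orthonormal columns. The image Gram form is
\[
\langle M^r_\perp A T^{(r)}_{r+1},M^r_\perp A' T^{(r)}_{r+1}\rangle_F=\mathrm{Re}\,\mathrm{tr}\bigl(A^*A'\,\Sigma_{r+1}\bigr),
\]
where we used $\Sigma_{r+1}=T^{(r)}_{r+1}(T^{(r)}_{r+1})^*$ as in \eqref{eq:Sigma_r_definition}. As a real linear operator on $\C^{m_r\times D_r}$ this is $A\mapsto A\Sigma_{r+1}$: right multiplication by a positive Hermitian matrix, acting independently on each of the $m_r$ rows. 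Viewed as a complex operator it has determinant $\det(\Sigma_{r+1})^{m_r}$, so its real determinant is $\det(\Sigma_{r+1})^{2m_r}$. Taking the square root as in \eqref{eq:NJ_any} gives the factor $\det(\Sigma_{r+1})^{m_r}$ on $F$.

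Finally, the block-diagonal Gram structure (the same on the source side, since $F\perp L$) lets us multiply the two factors, which gives the stated formula.

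The main obstacle is the normalization. $\Sigma_{r+1}$ depends on the chosen representative of $[T_{r+1}]$, so we must fix the unit-norm representative, consistent with Lemma \ref{lem:xi_M_isometric}. We must also confirm that the Fubini--Study metric at $\Psi_r(q_r)$ agrees with the Frobenius product on these tangent vectors, which requires $\|M^r\cdot T_{r+1}\|=1$. This holds because $M^r$ is an isometry.
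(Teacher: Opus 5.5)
Your proposal is correct and follows the paper's proof. You use the orthogonal splitting $\mathcal K_r^{\perp}=F\oplus L$, the factor $J_r$ on $L$ from Proposition~\ref{prop:NJ_rest_L}, and the Gram form $\mathrm{Re}\,\mathrm{tr}(A^*A'\Sigma_{r+1})$ on $F$; the only cosmetic difference is that you treat that form as the complex-linear operator $A\mapsto A\Sigma_{r+1}$ (real determinant $\det(\Sigma_{r+1})^{2m_r}$) instead of writing out the paper's real block matrix in $\mathrm{Re}\,\Sigma_{r+1}$ and $\mathrm{Im}\,\Sigma_{r+1}$. Your orthogonality check against the images $D_{q_r}\Psi_r(h_i)=M^r\cdot(I+B_rB_r^*)z_i$ is in fact the right one (the paper's displayed check pairs with kernel vectors, whose images vanish), and your explicit unit-norm choice of $T_{r+1}$ makes precise what the paper leaves implicit in the definition of $\Sigma_{r+1}$.
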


\begin{proof}
By Proposition \ref{prop:basis_complement}, we need compute the Gram matrix of the basis of $F$ in \eqref{eq:K_decomposition} and that of its image through the differential $D_{q_r}\Psi_r$. It is straightforward to see that the basis is orthonormal. For the images through the differential we have
\begin{align*}
    \langle D_{q_r}\Psi_r(M^r_{\perp}E_{ab}, 0), D_{q_r}\Psi_r(M^r_{\perp}E_{cd}, 0) \rangle 
&= \langle (M^r_{\perp}E_{ab})\cdot T_{r+1}, (M^r_{\perp}E_{cd})\cdot T_{r+1} \rangle \\
&=\delta_{ac}\Re((\Sigma_{r+1}([T_{r+1}]))_{bd}), \\[4pt]
\langle D_{q_r}\Psi_r(M^r_{\perp}E_{ab}, 0), D_{q_r}\Psi_r(iM^r_{\perp}E_{cd}, 0) \rangle 
&= -\delta_{ac}\Im((\Sigma_{r+1}([T_{r+1}]))_{bd}), \\[4pt]
\langle D_{q_r}\Psi_r(iM^r_{\perp}E_{ab}, 0), D_{q_r}\Psi_r(M^r_{\perp}E_{cd}, 0) \rangle 
&= \delta_{ac}\Im((\Sigma_{r+1}([T_{r+1}]))_{bd}), \\[4pt]
\langle D_{q_r}\Psi_r(iM^r_{\perp}E_{ab}, 0), D_{q_r}\Psi_r(iM^r_{\perp}E_{cd}, 0) \rangle 
&= \delta_{ac}\Re((\Sigma_{r+1}([T_{r+1}]))_{bd}).
\end{align*}
Thus the Gram matrix is given by 
\begin{equation*}A := (D_{q_r}\Psi_r|_F)^*D_{q_r}\Psi_r|_F =
\begin{bmatrix}
        \id_{m_r} \otimes \Re(\Sigma_{r+1}([T_{r+1}])) & -\id_{m_r} \otimes \text{Im}(\Sigma_{r+1}([T_{r+1}])) \\
        \id_{m_r} \otimes \text{Im}(\Sigma_{r+1}([T_{r+1}])) & \id_{m_r} \otimes \Re(\Sigma_{r+1}([T_{r+1}]))
    \end{bmatrix} ,
\end{equation*}
and it follows that the normal Jacobian of the differential restricted to $F$ is 
\begin{align*}
    \NJ\left(\Psi_r, (M^r, [T_{r+1}])\right)|_F &= \sqrt{\det(A)} =\det(\Sigma_{r+1}([T_{r+1}]))^{m_r}.
\end{align*}
Since the blocks $F$ and $L$ are orthogonal, and since also their images through $D_{q_r}\Psi_r$ are orthogonal
\begin{equation*}
    \langle D_{q_r}\Psi_r(M^r_{\perp}E_{ab}, 0), D_{q_r}\Psi_r(M^rX_i, B_r(X_i)) \rangle 
= 0\, ,
\end{equation*}
the normal Jacobian of $\Psi_r$ splits as the product
\begin{align*}
    \NJ(\Psi_r,(M^r, [T_{r+1}]))
&= \NJ\left(\Psi_r, (M^r, [T_{r+1}])\right)|_F \ \NJ\left(\Psi_r, (M^r, [T_{r+1}])\right)|_L \\
&= \det\bigl(\Sigma_{r+1}([T_{r+1}])\bigr)^{m_r}
\,J_r(\Psi_r(M^r, [T_{r+1}])). \qedhere
\end{align*}
\end{proof}

\section{Recursive volume computation}\label{sec:recursive_volume_comp}
In this section we introduce the main recursion relation used in the computation of the volume via \eqref{eq:coarea_general_clean}. The idea is to recursively shift integrations on tail varieties from the "complicated" full TT variety $X_1$ to the "simple" projective space $X_N$. Shifting the integration domain in this fashion will naturally affect the integrand as well. We first show how a recursion of integration domains emerges, and then follow with a relation that illustrates how the integrand behaves along the recursion.
\subsection{Recursion on tail varieties}\label{sec:recursion_tail_varieties} 
Section \ref{sec:NJ_fiber_volume} provides us all the ingredients to apply \eqref{eq:coarea_general_clean} recursively to tail varieties. Define the recursive function $F_r: \Herm_{>0}(D_r) \to \R$ given by
\begin{equation}\label{eq:F_recursion}
    F_r(\Sigma)
:=
\det(\Sigma)^{m_r}
\int_{\St(D_r,n_r)}
F_{r-1}\!\left(
L_{M^r}(\Sigma)
\right)\,d\mu(M^r),
\end{equation}
with the linear function $L_{M^r}(\Sigma)
:=
\sum_{j_r=1}^{d_r}
M^r_{j_r}\Sigma (M_{j_r}^r)^*$ and initial steps $F_0 =1,~F_1(\Sigma) = \det(\Sigma)^{m_1}$.
\begin{proposition}\label{prop:recursive_coarea_formula}
The following recursive relation holds
\begin{align}\label{eq:integral_recursion}
\int_{X_r}F_{r-1}(\Sigma_r([T_r]))\,d\vol_{X_r}([T_r])
&=
\vol(\Gr(D_r, n_r))
\int_{X_{r+1}}F_r(\Sigma_{r+1}([T_{r+1}]))\,d\vol_{X_{r+1}}([T_{r+1}]).
\end{align}
\end{proposition}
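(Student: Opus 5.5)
We apply the corollary form \eqref{eq:coarea_general_clean} of the smooth coarea formula to the one-step map $\Psi_r:Y_r=\St(D_r,n_r)\times X_{r+1}\to X_r$, which is a smooth surjective submersion by Proposition \ref{prop:Psi_r_submersion}. The natural choice of test function is
\[
h([T_r]) := \frac{F_{r-1}(\Sigma_r([T_r]))}{J_r([T_r])},
\]
which is well defined on $X_r$ because $J_r$ is well defined there (Proposition \ref{prop:fiber_volume_Psi_r}) and is at least $1$. By Proposition \ref{prop:fiber_volume_Psi_r}, the fiber volume at $[T_r]$ equals $\vol(\U(D_r))\,J_r([T_r])$. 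The right-hand side of \eqref{eq:coarea_general_clean} therefore reduces to
\[
\vol(\U(D_r))\int_{X_r}F_{r-1}(\Sigma_r([T_r]))\,d\vol_{X_r}([T_r]).
\]
On the left-hand side, Corollary \ref{cor:NJ_Psi_r} gives
\[
\NJ(\Psi_r,(M^r,[T_{r+1}]))=\det(\Sigma_{r+1}([T_{r+1}]))^{m_r}\,J_r(\Psi_r(M^r,[T_{r+1}])),
\]
so the factor $J_r$ cancels exactly against the denominator of $h\circ\Psi_r$. The left-hand side is then
\[
\int_{X_{r+1}}\int_{\St(D_r,n_r)} F_{r-1}\bigl(\Sigma_r([M^r\cdot T_{r+1}])\bigr)\,\det(\Sigma_{r+1}([T_{r+1}]))^{m_r}\,dM^r\,d\vol_{X_{r+1}}.
\]
Here we used that the volume on the product $Y_r$ is the product measure, which follows from the product metric and Fubini.

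The key algebraic step is the identity
\[
\Sigma_r([M^r\cdot T_{r+1}]) = L_{M^r}(\Sigma_{r+1}([T_{r+1}])).
\]
To verify it, write $T_r^{(r-1)}$ as the $D_{r-1}\times(d_r\cdots d_N)$ matrix whose column block indexed by $j_r$ is $M^r_{j_r}T_{r+1}^{(r)}$. Then
\[
T_r^{(r-1)}(T_r^{(r-1)})^*=\sum_{j_r}M^r_{j_r}\,T_{r+1}^{(r)}(T_{r+1}^{(r)})^*\,(M^r_{j_r})^*.
\]
One point of care is the choice of representative: $\Sigma_r$ depends on the representative of $[T_r]$ and not only on the projective class. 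I would fix normalized representatives with $\|T_{r+1}\|=1$. Since $M^r$ is an isometry, $\|M^r\cdot T_{r+1}\|=1$ as well, so the normalizations are consistent, and this is exactly the convention already used in Lemma \ref{lem:xi_M_isometric}.

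With the identity in hand, the inner Stiefel integral equals
\[
\vol(\St(D_r,n_r))\int F_{r-1}\bigl(L_{M^r}(\Sigma_{r+1})\bigr)\,d\mu(M^r).
\]
Combined with the factor $\det(\Sigma_{r+1})^{m_r}$, this is $\vol(\St(D_r,n_r))\,F_r(\Sigma_{r+1}([T_{r+1}]))$ by definition \eqref{eq:F_recursion}. Dividing both sides by $\vol(\U(D_r))$ and using \eqref{eq:vol_gr} produces the factor $\vol(\Gr(D_r,n_r))$, which gives \eqref{eq:integral_recursion}.

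The main obstacle I expect is justifying the coarea step rigorously. The integrand must be measurable, which is fine since it is continuous, and the Fubini exchanges involve nonnegative functions that may be unbounded on the noncompact $X_{r+1}$; for nonnegative integrands, Tonelli's theorem handles both issues. I would also double-check that the normalization convention for $\Sigma$ matches the one implicitly used in Corollary \ref{cor:NJ_Psi_r}, so that the factor $\det(\Sigma_{r+1})^{m_r}$ appearing there is the same as the one in $F_r$.
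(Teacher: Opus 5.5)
Your proposal is correct and follows the paper's proof step for step. The shared steps are: the same test function $F_{r-1}(\Sigma_r)/J_r$ in the coarea formula for $\Psi_r$; cancellation of $J_r$ via Proposition~\ref{prop:fiber_volume_Psi_r} and Corollary~\ref{cor:NJ_Psi_r}; the identity $\Sigma_r(\Psi_r(M^r,[T_{r+1}]))=L_{M^r}(\Sigma_{r+1}([T_{r+1}]))$; normalization of the Stiefel measure; and \eqref{eq:vol_gr}. Your extra remarks make explicit two points the paper leaves implicit. First, you fix unit-norm representatives, which is needed for $\Sigma_r$ to be well defined and is preserved because $\|M^r\cdot T_{r+1}\|=\|T_{r+1}\|$. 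Second, you invoke Tonelli, although with $\operatorname{tr}\Sigma_{r+1}=1$ and $J_r\ge 1$ the integrands are actually bounded, so the unboundedness concern does not arise.
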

\begin{proof}
We apply the smooth coarea formula to \(\Psi_r:Y_r\rightarrow X_r\) for the function $h_r:X_r\to \R$ defined as 
\begin{equation*}
h_r([T_r]):= \frac{F_{r-1}(\Sigma_r([T_r]))}{J_r([T_{r}])}\,.
\end{equation*}
By Proposition \ref{prop:fiber_volume_Psi_r}, the right-hand side of \eqref{eq:coarea_general_clean} reads
\begin{align*}
\int_{X_r}h_r([T_r])\,\vol(\Psi_r^{-1}([T_r]))\,d\vol_{X_r}([T_r]) &= \int_{X_r}
\frac{F_{r-1}(\Sigma_r([T_r]))}{ J_r([T_{r}])}
\cdot
\vol(\U(D_r))\,J_r([T_{r}])
\,d\vol_{X_r}([T_r]) \\
&= \vol(\U(D_r))
\int_{X_r}F_{r-1}(\Sigma_r([T_r]))\,d\vol_{X_r}([T_r]).
\end{align*}
Now consider the left-hand side of \eqref{eq:coarea_general_clean} given by
\begin{align*}
\int_{Y_r}
\NJ(\Psi_r,q_r)\,
h_r(\Psi_r(q_r))
\,dM^r\,d\vol_{X_{r+1}}([T_{r+1}]),
\end{align*}
where $q_r=(M^r,[T_{r+1}])\in Y_r$. Using Corollary \ref{cor:NJ_Psi_r}, and the recursion
formula
\[
\Sigma_r(\Psi_r(M^r,[T_{r+1}]))
=
\sum_{j_r=1}^{d_r}
M^r_{j_r}\Sigma_{r+1}([T_{r+1}])(M^r_{j_r})^* = L_{M^r}(\Sigma_{r+1}([T_{r+1}]),
\]
we obtain
\begin{align}
\notag &\int_{Y_r}
\det(\Sigma_{r+1}([T_{r+1}]))^{m_r}
J_r(\Psi_r(M^r,[T_{r+1}]))
\frac{
F_{r-1}\!\left(L_{M^r}(\Sigma_{r+1}([T_{r+1}]))
\right)
}{
J_r(\Psi_r(M^r,[T_{r+1}]))
}
\,dM^r\,d\vol_{X_{r+1}}([T_{r+1}]) \\ \label{eq:rhs_coarea}
&\qquad=
\int_{Y_r}
\det(\Sigma_{r+1}([T_{r+1}]))^{m_r}
F_{r-1}\!\left(L_{M^r}(\Sigma_{r+1}([T_{r+1}]))
\right)
\,dM^r\,d\vol_{X_{r+1}}([T_{r+1}]).
\end{align}
Recall that the Stiefel measure can be normalized as $dM^r=\vol(\St(D_r,n_r))\,d\mu(M^r)$. Using \eqref{eq:F_recursion}, \eqref{eq:rhs_coarea} can be written as
\begin{align*}
\vol(\St(D_r,D_{r-1}d_r))
\int_{X_{r+1}}
F_r(\Sigma_{r+1}([T_{r+1}]))\,d\vol_{X_{r+1}}([T_{r+1}]).
\end{align*}
Comparing the obtained left and right sides of \eqref{eq:coarea_general_clean}, from \eqref{eq:vol_gr} we get the recursive integral expression.
\end{proof}
\noindent
As a corollary of Proposition \ref{prop:recursive_coarea_formula}, we obtain an expression for the volume of TT varieties.
\begin{corollary}\label{cor:full-recursive-volume}
The volume of the tensor train variety of signature $(D, d)$ is
\begin{equation}\label{eq:full-recursive-volume}
\vol(V_{D,d})
=
\left(\prod_{r=1}^{N-1}\vol\bigl(\Gr(D_r,n_r)\bigr)\right)
\int_{X_N}F_{N-1}(CC^*)\,d\vol_{X_N}([C]).
\end{equation}
\end{corollary}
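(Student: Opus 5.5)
The plan is to telescope the recursion of Proposition \ref{prop:recursive_coarea_formula} over $r=1,\dots,N-1$. It has two ingredients: the recursion must start from a well-defined quantity that equals $\vol(V_{D,d})$, and it must end at $X_N$.

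\textbf{Starting point.} Since $F_0=1$, the left-hand side of \eqref{eq:integral_recursion} for $r=1$ is
\[
\int_{X_1}F_0(\Sigma_1([T_1]))\,d\vol_{X_1}([T_1])=\vol(X_1)=\vol(V^{=}_{D,d}).
\]
I would note that $V^{=}_{D,d}$ is Zariski open and dense in $V_{D,d}$, so its complement is a proper subvariety of strictly lower dimension. That complement is therefore a set of measure zero for the $2\dim_\C(V_{D,d})$-dimensional Hausdorff (Fubini--Study) measure, which gives $\vol(V_{D,d})=\vol(V^{=}_{D,d})$. Here $\Sigma_1$ is a positive $1\times 1$ matrix because $D_0=1$, but with $F_0=1$ this plays no role.

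\textbf{Telescoping.} Next, apply \eqref{eq:integral_recursion} successively for $r=1,2,\dots,N-1$. Each step replaces an integral over $X_r$ of $F_{r-1}\circ\Sigma_r$ by $\vol(\Gr(D_r,n_r))$ times an integral over $X_{r+1}$ of $F_r\circ\Sigma_{r+1}$. This is exactly the form needed at the next step, so an induction on $r$ gives
\[
\vol(V_{D,d})=\Bigl(\prod_{r=1}^{k}\vol(\Gr(D_r,n_r))\Bigr)\int_{X_{k+1}}F_k(\Sigma_{k+1}([T_{k+1}]))\,d\vol_{X_{k+1}}
\]
for every $k$, and we take $k=N-1$.

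\textbf{Endpoint.} Finally, identify $X_N=\P^{=}(\C^{D_{N-1}\times d_N})$. For $T_N=C$, the flattening $T_N^{(N-1)}$ is just the matrix $C$ itself. By \eqref{eq:Sigma_r_definition} this gives $\Sigma_N([C])=CC^*$, which is the integrand in \eqref{eq:full-recursive-volume}.

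\textbf{Main obstacle.} The main subtlety is well-definedness. $\Sigma_r$ depends on the chosen representative $T_r$, and $F_{r-1}$ is homogeneous, so the integrals only make sense after a normalization. I would fix unit-norm representatives $\|T_r\|=1$ throughout, consistently with the Fubini--Study conventions used in Lemma \ref{lem:xi_M_isometric} and Corollary \ref{cor:NJ_Psi_r}. Then $\Sigma_r$ is defined up to conjugation by a phase, and it is invariant under that conjugation. Moreover, since $M^r$ has orthonormal columns, $\|M^r\cdot T_{r+1}\|=\|T_{r+1}\|$, so normalized representatives are preserved along the one-step maps. This ensures the recursion in Proposition \ref{prop:recursive_coarea_formula} is compatible at every step.

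I would also check finiteness. Each integrand is continuous on the noncompact stratum $X_r$, but it is bounded, because unit-norm tensors give bounded $\Sigma$ and $F$ is polynomial in $\Sigma$. Hence all integrals are finite, and Theorem \ref{thm:smooth_coarea} applies at every step.
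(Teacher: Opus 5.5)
Your proposal is correct and follows the paper's proof: start at $r=1$ with $F_0=1$, so the left side is $\vol(X_1)=\vol(V^{=}_{D,d})=\vol(V_{D,d})$; then telescope Proposition~\ref{prop:recursive_coarea_formula} through $r=N-1$ and use $\Sigma_N([C])=CC^*$. Your remarks on unit-norm representatives, and on boundedness and finiteness of the integrands, spell out conventions the paper uses only implicitly (finiteness additionally uses that each $X_r$ has finite volume, since its Zariski closure is a projective variety).
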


\begin{proof}
Recall that $V_{D,d}^= = X_1$. We start from \eqref{eq:integral_recursion} with $r=1$ and $F_0 = 1$:
\[
\vol(X_1)
=
\vol\bigl(\Gr(D_1,d_1)\bigr)
\int_{X_2}F_1(\Sigma_2([T_2]))\,d\vol_{X_2}([T_2]).
\]
Applying \eqref{eq:integral_recursion} to the integral of the right-hand side successively for \(r=2,3,\dots,N-1\)
we get
\[
\vol(V^{=}_{D,d}) =\vol(X_1)
=
\left(\prod_{r=1}^{N-1}\vol\bigl(\Gr(D_r,n_r)\bigr)\right)
\int_{X_N}F_{N-1}(\Sigma_N([C]))\,d\vol_{X_N}([C]).
\]
Since the complement of $V_{D,d}^{=}$ in $V_{D,d}$ has measure zero and \(\Sigma_N([C])=CC^*\), we obtain \eqref{eq:full-recursive-volume}.
\end{proof}
\noindent
The computation of the volume of the TT variety has thus been reduced to an integral over the locus \(X_N\) of full rank matrices.

\subsection{Recursion on tail polynomials}\label{sec:integration}
In order to perform the integration in \eqref{eq:full-recursive-volume}, we need to understand the integrand. We claim that
\[
P_{D,d}(x,y):=F_{N-1}(CC^*)
=
\sum_{\alpha,\beta} c_{\alpha,\beta}\,x^\alpha y^\beta
\]
is a polynomial in the entries $x$ of \(C\) and their formal conjugates \(y\).

\begin{proposition}\label{prop:polynomial_C}
The function $P_{D,d}(x,y)$ is a bihomogeneous polynomial of bidegree \((a,a)\), where
\[
a=\sum_{r=1}^{N-1}D_rm_r.
\]
\end{proposition}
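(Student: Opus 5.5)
We prove by induction on $r$ the following statement: for each $r=0,\dots,N-1$, the function $F_r(\Sigma)$ is a homogeneous polynomial of degree $a_r:=\sum_{k=1}^{r}D_km_k$ in the entries of $\Sigma\in\Herm_{>0}(D_r)$. Once this holds for $r=N-1$, substitute $\Sigma=CC^*$. Each entry $(CC^*)_{ij}=\sum_k x_{ik}y_{jk}$ has bidegree $(1,1)$ in $x$ and the formal conjugates $y=\bar x$. A homogeneous polynomial of degree $a$ in these entries is therefore bihomogeneous of bidegree $(a,a)$, with $a=a_{N-1}$, which is the claim.

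The base cases are immediate: $F_0=1$ has degree $0$, and $F_1(\Sigma)=\det(\Sigma)^{m_1}$ is homogeneous of degree $D_1m_1$ in the entries of the $D_1\times D_1$ matrix $\Sigma$. For the inductive step, assume $F_{r-1}$ is a homogeneous polynomial of degree $a_{r-1}$ on $D_{r-1}\times D_{r-1}$ matrices. For fixed $M^r$, the map $L_{M^r}(\Sigma)=\sum_{j}M^r_{j}\Sigma(M^r_j)^*$ is linear in $\Sigma$. Its entries are linear forms in the entries of $\Sigma$, with coefficients that are polynomials in the entries of $M^r$ and $\overline{M^r}$. Hence $F_{r-1}(L_{M^r}(\Sigma))$ is a homogeneous polynomial of degree $a_{r-1}$ in the entries of $\Sigma$, and its coefficients $c_\gamma(M^r)$ are polynomials in $M^r,\overline{M^r}$.

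Integrating against the normalized Haar measure $\mu$ on the compact manifold $\St(D_r,n_r)$ commutes with the finite sum over monomials. So $\int F_{r-1}(L_{M^r}(\Sigma))\,d\mu(M^r)=\sum_\gamma\big(\int c_\gamma\,d\mu\big)\Sigma^\gamma$. The integrals are finite constants, since continuous functions on a compact set are integrable, and the result is again homogeneous of degree $a_{r-1}$. Multiplying by $\det(\Sigma)^{m_r}$, which is homogeneous of degree $D_rm_r$, gives degree $a_{r-1}+D_rm_r=a_r$, completing the induction.

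The main subtlety is not the degree count but justifying that $F_r$, defined on $\Herm_{>0}(D_r)$, is genuinely given by a polynomial. The argument above yields a polynomial expression valid on all matrices, which restricts to $F_r$ on the positive-definite cone. One point to check is that $L_{M^r}(\Sigma)$ remains in $\Herm_{>0}(D_{r-1})$, so that the recursion \eqref{eq:F_recursion} is consistent. This fails for rank-deficient $M^r$-blocks. However, since $F_{r-1}$ is already known to extend polynomially to all matrices, this causes no problem.

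A second point is to verify that the bidegree is exactly $(a,a)$, i.e.\ that $P_{D,d}$ is not identically zero. This follows because $F_r$ is an integral of $\det(\Sigma)^{m_r}$ times a quantity that is nonnegative for positive-definite $\Sigma$ and positive on an open set of $M^r$. Hence $F_r>0$ on $\Herm_{>0}$, and so $P_{D,d}\neq 0$.
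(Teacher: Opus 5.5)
Your proof is correct and follows the paper's argument essentially verbatim. You show by induction that $F_r$ is homogeneous of degree $a_r$, using linearity of $L_{M^r}$, the fact that Stiefel averaging acts only on coefficients, and multiplication by $\det(\Sigma)^{m_r}$, and then substitute $\Sigma=CC^*$, whose entries have bidegree $(1,1)$. Your extra remarks are sound refinements the paper leaves implicit: that $F_{r-1}$ extends polynomially beyond the positive-definite cone, and that $P_{D,d}\not\equiv 0$, which relies on admissibility so that $m_r\ge 0$ and $L_{M^r}(\Sigma)$ is positive definite for generic $M^r$.
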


\begin{proof}
We first show by induction on \(r\) that \(F_r(\Sigma)\) is a homogeneous polynomial in the entries of
\(\Sigma\in \Herm_{>0}(D_r)\), of total degree
\begin{equation}\label{eq:a_r}
a_r:=\sum_{i=1}^r D_i m_i.
\end{equation}
For \(r=1\), we have $F_1(\Sigma)=\det(\Sigma)^{m_1}$,
which is a homogeneous polynomial of degree \(D_1m_1=a_1\) in the entries of \(\Sigma\).
Now assume that \(F_{r-1}(\Sigma')\) is a homogeneous polynomial of degree \(a_{r-1}\) in the entries of
\(\Sigma'\in \Herm_{>0}(D_{r-1})\). Recall \eqref{eq:F_recursion} and the map $L_{M^r}(\Sigma)$, which is linear in the entries of \(\Sigma\in \Herm_{>0}(D_r)\). Hence $F_{r-1}(
L_{M^r}(\Sigma))$
is a homogeneous polynomial of degree \(a_{r-1}\) in the entries of \(\Sigma\). Integrating over
the Stiefel manifold averages the coefficients and therefore does not change the degree in the entries of $\Sigma$. Multiplying by \(\det(\Sigma)^{m_r}\), which has degree \(D_rm_r\),
shows that \(F_r\) is a homogeneous polynomial of degree $a_{r-1}+D_rm_r=a_r$.
Applying this for \(r=N-1\), it follows that \(F_{N-1}(\Sigma)\) is a homogeneous polynomial of degree $a$ in the entries of \(\Sigma\). The entries of
\(\Sigma=CC^*\) are
\[
\Sigma_{ab}=\sum_{k=1}^{d_N}x_{ak}y_{bk},
\]
hence each entry of \(\Sigma\) is bihomogeneous of bidegree \((1,1)\) in \((x,y)\). It follows that $P_{D,d}(x,y)$ is bihomogeneous of bidegree \((a,a)\).
\end{proof}

We now show that the last tail factor $F_{N-1}(CC^*)$ can be expressed as a sum of Schur polynomials. This will allow us to integrate \eqref{eq:full-recursive-volume} in Section \ref{sec:degree}. \\

\begin{lemma}\label{lemma:F_r_invariance}
    The polynomial $F_r(\Sigma)$ is $\U(D_r)$-invariant, i.e., $F_r(U\Sigma U^*)=F_r(\Sigma)$ for every $U \in \U(D_r)$.
\end{lemma}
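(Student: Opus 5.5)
We argue by induction on $r$, using the recursive definition \eqref{eq:F_recursion}. The base cases are immediate: $F_0=1$ is constant, and $F_1(\Sigma)=\det(\Sigma)^{m_1}$ satisfies $\det(U\Sigma U^*)=\det(U)\det(\Sigma)\overline{\det(U)}=\det(\Sigma)$, since $|\det U|=1$ for $U\in\U(D_1)$.

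For the inductive step, fix $r\geq 2$ and $U\in\U(D_r)$. The prefactor $\det(U\Sigma U^*)^{m_r}=\det(\Sigma)^{m_r}$ is invariant by the same computation, so it suffices to show that the Stiefel average
\[
\int_{\St(D_r,n_r)}F_{r-1}\bigl(L_{M^r}(U\Sigma U^*)\bigr)\,d\mu(M^r)
\]
does not depend on $U$. The key observation is a block-wise identity:
\[
L_{M^r}(U\Sigma U^*)=\sum_{j_r=1}^{d_r}(M^r_{j_r}U)\,\Sigma\,(M^r_{j_r}U)^*=L_{M^rU}(\Sigma).
\]
Here $M^rU$ is again in $\St(D_r,n_r)$, and its $j_r$-th block is exactly $M^r_{j_r}U$, because right multiplication acts on each $D_{r-1}\times D_r$ block separately. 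We then change variables $M^r\mapsto M^rU$. This map is an isometry of $\St(D_r,n_r)$ for the metric induced by \eqref{eq:real_frobenius}: indeed $\langle AU,BU\rangle_F=\mathrm{Re}\,\mathrm{tr}(U^*A^*BU)=\langle A,B\rangle_F$. Hence it preserves the normalized measure $\mu$, and the integral is unchanged. This step uses invariance of $\mu$ under the right $\U(D_r)$-action, which holds in addition to the left $\U(n_r)$-invariance.

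The argument does not actually need the inductive hypothesis on $F_{r-1}$, only that $F_{r-1}$ is measurable and integrable, so that the change of variables is legitimate. Integrability follows from the fact, established in the proof of Proposition \ref{prop:polynomial_C}, that $F_{r-1}$ is a polynomial. The integral is then over a compact set and is finite. The only point requiring care is the right-invariance of the measure, and the Frobenius computation above settles it. Combining the invariance of the prefactor with the invariance of the Stiefel average gives $F_r(U\Sigma U^*)=F_r(\Sigma)$, which completes the induction.
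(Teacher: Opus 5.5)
Your proof is correct and follows the paper's argument: the identity $L_{M^r}(U\Sigma U^*)=L_{M^rU}(\Sigma)$, the invariance of $\det(\Sigma)^{m_r}$, and the right-invariance of the normalized measure on $\St(D_r,n_r)$ under $M^r\mapsto M^rU$. The induction is unnecessary, as you note yourself, and your Frobenius-isometry check of right-invariance makes explicit a step the paper leaves implicit.
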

\begin{proof}
    For every $U \in \U(D_r)$ we have $L_{M^r}(U\Sigma U^*) = L_{M^rU}(\Sigma)$. Hence 
    \begin{align*}
        F_r(U\Sigma U^*) =& \ \det(U\Sigma U^*)^{m_r}\int_{\St(D_r,n_r)}
F_{r-1}\!\left(
L_{M^r U}(\Sigma)
\right)\,d\mu(M^r) \\=& \ \det(\Sigma)^{m_r}\int_{\St(D_r,n_r)}
F_{r-1}\!\left(
L_{M^r}(\Sigma)
\right)\,d\mu(M^r) = F_r(\Sigma),
    \end{align*}
by the right-invariance of the Haar measure under the transformation $M^r \mapsto M^rU$. 
\end{proof}

By Lemma \ref{lemma:F_r_invariance}, $F_r(\Sigma)$ is a symmetric polynomial of degree $a_r$ in the eigenvalues $t:=(t_1,\dots,t_{D_r})$ of \(\Sigma\). Therefore, it admits an expansion in terms of Schur polynomials \cite{Macdonald_1995}. For a partition $\mu \vdash a_r$ of length $\ell(\mu) \leq D_{r}$ the corresponding Schur polynomial reads
    \begin{equation}\label{eq:schur}
        s_{\mu}(t) := \frac{\det(t_i^{\mu_j-D_r-j})_{i,j=1}^{D_r}}{\det(t_i^{D_r-j})_{i,j=1}^{D_r}}.
    \end{equation}
These form a basis for the space of symmetric polynomials of degree $a_r$ in $D_r$ variables. We denote by $s_{\mu}(\Sigma)$ the Schur polynomial $s_{\mu}$ evaluated at the eigenvalues of $\Sigma$. 
\\~\\
Consider the Schur expansion
\begin{align}\label{eq:c}
    F_{r-1}(L_{M^r}(\Sigma)) = \sum_{\substack{\lambda\vdash a_{r-1}\\ \ell(\lambda)\le D_{r-1}}} c_{\lambda}\,s_{\lambda}(L_{M^r}(\Sigma)) \, ,
\end{align}
where $c_{\lambda} \in \C$. Recall the integration of $F_{r-1}$ in the definition \eqref{eq:F_recursion} of $F_r$. Since the integral is linear, we introduce 
\begin{equation}\label{eq:H_function}
    H_\lambda^{(r)}(\Sigma)
:=
\int_{\St(D_r,n_r)}
s_\lambda\!\left(L_{M^r}(\Sigma)\right)
\,d\mu(M^r),
\end{equation}
for any partition $\lambda$ of $a_{r-1}$ with length $\ell(\lambda) \leq D_{r-1}$. 
Since \(H_\lambda^{(r)}\) is homogeneous of degree $|\lambda|=a_{r-1}$ and
\(\U(D_r)\)-invariant, it also admits a Schur expansion
\begin{equation}\label{eq:H_function_exp}
    H_\lambda^{(r)}(\Sigma)
=
\sum_{\substack{\mu\vdash a_{r-1}\\ \ell(\mu)\le D_r}}
h_{\lambda,\mu}^{(r)}s_\mu(\Sigma).
\end{equation}

\noindent In order to determine the coefficients $h_{\lambda,\mu}^{(r)}$ we use the \emph{Stiefel-Weingarten formula} \cite{Collins_2006,Collins_2017,Collins_2022}. This allows to integrate polynomials over Stiefel manifolds.
\begin{proposition}[Stiefel--Weingarten formula, Corollary 2.4 \cite{Collins_2006}]\label{prop:stiefel_weingarten}
Let \(M^r\in \St(D_r,n_r)\). Then for \(a_{r-1}\ge 1\),
\begin{equation}\label{eq:stiefel_weingarten_formula}
\int_{\St(D_r,n_r)}
\prod_{s=1}^{a_{r-1}} M^r_{i_s a_s}\,
\overline{M^r_{i'_s c_s}}
\,d\mu(M^r)
=
\sum_{\sigma,\tau\in S_{a_{r-1}}}
\left(\prod_{s=1}^{a_{r-1}} \delta_{i_s,i'_{\sigma(s)}}\right)
\left(\prod_{s=1}^{a_{r-1}} \delta_{a_s,c_{\tau(s)}}\right)
\mathrm{Wg}_{n_r}(\sigma^{-1}\tau),
\end{equation}
where $S_{a_{r-1}}$ is the symmetric group and \(\mathrm{Wg}_{n_r}\) is the unitary
Weingarten function.
\end{proposition}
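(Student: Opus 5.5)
The plan is to deduce the statement from the classical unitary Weingarten formula on $\U(n_r)$. To do this, I would identify the normalized Haar measure $\mu$ on $\St(D_r,n_r)$ with the pushforward of the normalized Haar measure on $\U(n_r)$ under the map that truncates to the first $D_r$ columns.

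\textbf{Step 1 (reduction to the unitary group).} Consider the projection $\pi:\U(n_r)\to\St(D_r,n_r)$, $U\mapsto U\,[\,I_{D_r}\ 0\,]^{T}$, which keeps the first $D_r$ columns. It is surjective, and it is equivariant for left multiplication by $\U(n_r)$. The pushforward $\pi_*(dU)$ of the normalized Haar measure $dU$ is therefore a left-$\U(n_r)$-invariant probability measure on $\St(D_r,n_r)\cong \U(n_r)/\U(n_r-D_r)$. The measure $\mu$ is the normalized Riemannian volume of the $\U(n_r)$-invariant metric $g_F$ from \eqref{eq:real_frobenius}, so it is also a left-invariant probability measure. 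Invariant probability measures on a compact homogeneous space are unique, hence $\mu=\pi_*(dU)$. Consequently
\[
\int_{\St(D_r,n_r)}\prod_{s=1}^{a_{r-1}} M^r_{i_s a_s}\overline{M^r_{i'_s c_s}}\,d\mu(M^r)
=\int_{\U(n_r)}\prod_{s=1}^{a_{r-1}} U_{i_s a_s}\overline{U_{i'_s c_s}}\,dU,
\]
where the column indices $a_s,c_s$ simply range in $\{1,\dots,D_r\}\subset\{1,\dots,n_r\}$.

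\textbf{Step 2 (unitary Weingarten formula).} Apply the Collins–\'Sniady formula to the right-hand side. To make the argument self-contained, I would sketch its derivation as follows.
\begin{itemize}
\item Write the integrand as a matrix coefficient of the operator $\Phi:=\int U^{\otimes a}\otimes\overline{U}^{\otimes a}\,dU$, with $a=a_{r-1}$. Equivalently, consider the averaging map $E(A)=\int U^{\otimes a}A\,(U^*)^{\otimes a}\,dU$ on $\mathrm{End}((\C^{n_r})^{\otimes a})$.
\item By invariance of Haar measure, $E$ is the orthogonal projection, for the trace inner product, onto the commutant of $U^{\otimes a}$.
\item By Schur–Weyl duality, this commutant is spanned by the permutation operators $P_\sigma$, $\sigma\in S_a$.
\item Writing $E(A)=\sum_\sigma x_\sigma P_\sigma$ and solving the normal equations involves the Gram matrix $G_{\sigma\tau}=\mathrm{tr}(P_\sigma^*P_\tau)=n_r^{\#\mathrm{cycles}(\sigma^{-1}\tau)}$. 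Its (pseudo-)inverse is $\Wg_{n_r}(\sigma^{-1}\tau)$.
\item Taking matrix entries produces exactly the two Kronecker-delta products $\prod_s\delta_{i_s,i'_{\sigma(s)}}$ and $\prod_s\delta_{a_s,c_{\tau(s)}}$ weighted by $\Wg_{n_r}(\sigma^{-1}\tau)$. This is \eqref{eq:stiefel_weingarten_formula}.
\end{itemize}

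\textbf{Main obstacle.} The substantive point is Step 1: one must check that the metric-defined measure $\mu$ really coincides with the pushforward of the Haar measure on $\U(n_r)$. This needs both the invariance of $g_F$ under left multiplication, which holds because $\mathrm{Re}\,\mathrm{tr}((UA)^*UB)=\mathrm{Re}\,\mathrm{tr}(A^*B)$, and the uniqueness of invariant measures on the transitive homogeneous space. A secondary subtlety is the regime $a_{r-1}>n_r$, where the permutation operators are linearly dependent. There the Gram matrix $G$ is singular and $\Wg_{n_r}$ must be taken as its pseudo-inverse (the definition in \cite{Collins_2006}). The projection argument in Step 2 still goes through, because $E(A)$ is independent of the choice of expansion coefficients $x_\sigma$.
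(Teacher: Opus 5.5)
Your proposal is correct and follows the paper's route. The paper gives no argument beyond citing the Collins--\'Sniady unitary Weingarten formula, and your Step 2 is a faithful sketch of that cited proof (Schur--Weyl duality, Gram matrix $n_r^{\#\mathrm{cycles}(\sigma^{-1}\tau)}$, pseudo-inverse when $a_{r-1}>n_r$). Your Step 1 adds the identification of $\mu$ with the pushforward of Haar measure on $\U(n_r)$, via uniqueness of invariant probability measures; the paper leaves this implicit, but it is exactly what licenses applying a unitary-group result to an integral over $\St(D_r,n_r)$.
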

\begin{theorem}[Schur--Weingarten coefficients]\label{thm:H_coefficients}
For fixed $r$, consider two partitions \(\lambda,\mu\vdash a_{r-1}\) with $\ell(\lambda)\le D_{r-1}$ and $\ell(\mu)\le D_r$.
Then the Schur coefficients in \eqref{eq:H_function_exp} have the form
\begin{equation}\label{eq:h_coeff}
    h_{\lambda,\mu}^{(r)}
=
\frac{s_\lambda(1^{D_{r-1}})}
{f^\lambda C_\mu(n_r)}
\sum_{\kappa\vdash a_{r-1}}
|\mathcal C_\kappa|\,
d_r^{\ell(\kappa)}
\chi^\lambda(\kappa)\chi^\mu(\kappa).
\end{equation}
Here $C_\mu(n_r)
:=
\prod_{(i,j)\in\mu}(n_r+j-i)$, \(\mathcal C_\kappa\) denotes the conjugacy class in \(S_{a_{r-1}}\) of cycle type \(\kappa\), \(f^\lambda\) is the dimension of the irreducible \(S_{a_{r-1}}\)-representation
indexed by \(\lambda\) and $\chi^\lambda(\kappa)$ is its character on \(\mathcal C_\kappa\).
\end{theorem}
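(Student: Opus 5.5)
The approach is to expand everything in power sums indexed by permutations, average with \cref{prop:stiefel_weingarten}, and collapse the resulting convolutions on $S_a$ using character orthogonality. Throughout, write $a:=a_{r-1}$, $n:=n_r$, $D:=D_r$, $D':=D_{r-1}$, and $d:=d_r$.

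\textbf{Step 1: rewrite the integrand as a trace.} Use the Frobenius formula
\[
s_\lambda(A)=\frac{1}{a!}\sum_{\sigma\in S_a}\chi^\lambda(\sigma)\,\operatorname{tr}\bigl(P_\sigma A^{\otimes a}\bigr),
\]
where $P_\sigma$ permutes the tensor factors of $(\C^{D'})^{\otimes a}$. Regard $M^r$ as an $n\times D$ matrix whose rows are indexed by pairs $(\alpha,j)\in[D']\times[d]$. Then $L_{M^r}(\Sigma)=\operatorname{tr}_{\C^{d}}\bigl(M^r\Sigma (M^r)^*\bigr)$ is a partial trace over the $d$-factor. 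Hence
\[
\operatorname{tr}\bigl(P_\sigma L_{M^r}(\Sigma)^{\otimes a}\bigr)=\operatorname{tr}\Bigl((P_\sigma\otimes \id^{\otimes a}_{\C^{d}})\,(M^r)^{\otimes a}\Sigma^{\otimes a}((M^r)^*)^{\otimes a}\Bigr),
\]
where the trace is taken on $(\C^{D'}\otimes\C^{d})^{\otimes a}=(\C^{n})^{\otimes a}$. The integrand is now a polynomial of degree $(a,a)$ in the entries of $M^r$ and their conjugates.

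\textbf{Step 2: apply the Stiefel--Weingarten formula.} Apply \cref{prop:stiefel_weingarten} in its operator form:
\[
\int (M^r)^{\otimes a}X((M^r)^*)^{\otimes a}\,d\mu=\sum_{\pi,\rho\in S_a}\Wg_{n}(\pi^{-1}\rho)\operatorname{tr}(P_{\rho}^{-1}X)\,P_\pi .
\]
Here $P_\pi$ acts on $(\C^n)^{\otimes a}$, and I will fix the index and inverse conventions carefully against \eqref{eq:stiefel_weingarten_formula}. Two traces appear:
\begin{itemize}
\item with $X=\Sigma^{\otimes a}$ the first one is $\operatorname{tr}(P_\rho^{-1}\Sigma^{\otimes a})=p_{\rho}(\Sigma)=\sum_\mu \chi^\mu(\rho)s_\mu(\Sigma)$, where $\ell(\mu)\le D$ is automatic because the remaining Schur functions vanish;
\item since $P_\pi$ on $(\C^{D'}\otimes\C^d)^{\otimes a}$ factors as $P_\pi\otimes P_\pi$, the second one is $\operatorname{tr}\bigl((P_\sigma\otimes\id)(P_\pi\otimes P_\pi)\bigr)=D'^{\,c(\sigma\pi)}\,d^{\,c(\pi)}$, with $c(\cdot)$ the number of cycles.
\end{itemize}
Collecting terms gives
\[
H^{(r)}_\lambda(\Sigma)=\frac1{a!}\sum_{\mu}s_\mu(\Sigma)\sum_{\sigma,\pi,\rho}\chi^\lambda(\sigma)\,D'^{\,c(\sigma\pi)}\,d^{\,c(\pi)}\,\Wg_n(\pi^{-1}\rho)\,\chi^\mu(\rho).
\]

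\textbf{Step 3: collapse the sums over $S_a$.} This is the main bookkeeping obstacle. Use three facts:
\begin{itemize}
\item $\sum_\sigma\chi^\lambda(\sigma)D'^{\,c(\sigma\pi)}=a!\,\frac{s_\lambda(1^{D'})}{f^\lambda}\chi^\lambda(\pi^{-1})$, which follows from $D'^{\,c(\tau)}=\sum_\nu\chi^\nu(\tau)s_\nu(1^{D'})$ and the convolution identity $\chi^\lambda*\chi^\nu=\delta_{\lambda\nu}\frac{a!}{f^\lambda}\chi^\lambda$;
\item the character expansion $\Wg_n=\frac{1}{(a!)^2}\sum_\nu\frac{(f^\nu)^2}{s_\nu(1^n)}\chi^\nu$;
\item the same convolution identity again, which gives $\sum_\rho\Wg_n(\pi^{-1}\rho)\chi^\mu(\rho)=\frac{f^\mu}{a!\,s_\mu(1^n)}\chi^\mu(\pi)$.
\end{itemize}
The remaining sum is $\sum_\pi d^{\,c(\pi)}\chi^\lambda(\pi)\chi^\mu(\pi)$. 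Grouping it by cycle type $\kappa$ gives $\sum_\kappa|\mathcal C_\kappa|\,d^{\ell(\kappa)}\chi^\lambda(\kappa)\chi^\mu(\kappa)$, using that characters are real. Finally, the hook-content formula $s_\mu(1^n)=f^\mu C_\mu(n)/a!$ turns $\frac{f^\mu}{a!\,s_\mu(1^n)}$ into $1/C_\mu(n)$, and all $a!$ factors cancel, yielding \eqref{eq:h_coeff}. One technical point needs a remark: when $n<a$, $\Wg_n$ is defined by restricting the character sum to $\ell(\nu)\le n$. This is harmless here, because only $\mu$ with $\ell(\mu)\le D\le n$ survive.

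I expect the main risk to be in Steps 2 and 3, specifically in getting the permutation conventions (which factor carries $\pi$ versus $\pi^{-1}$) and the powers of $a!$ exactly right. I would check the final formula against the trivial case $a=1$, where $H^{(r)}_{(1)}(\Sigma)=\frac{D'd}{n}\operatorname{tr}\Sigma=\operatorname{tr}\Sigma$.
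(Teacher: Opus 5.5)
Your proposal is correct and follows essentially the same route as the paper. Both proofs expand $s_\lambda$ into power sums via Frobenius and apply the Stiefel--Weingarten formula, which you write with permutation operators and a partial trace over $\C^{d_r}$ where the paper evaluates explicit Kronecker deltas. Both then collapse the $D_{r-1}$-dependence by the inverse Frobenius relation and character orthogonality, use the Weingarten character identity $\sum_\tau \Wg_{n_r}(\sigma^{-1}\tau)\chi^\mu(\tau)=\chi^\mu(\sigma)/C_\mu(n_r)$, and group by cycle type; your cancellation of the $a_{r-1}!$ factors and your handling of the restriction $\ell(\mu)\le D_r\le n_r$ are also right.
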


\begin{proof}
Write the row index of \(M^r\in\St(D_r, n_r) \subset \C^{n_r\times D_r}\) as a pair $(\alpha,j_r)$ with $\alpha=1,\dots,D_{r-1}$ and
$j_r=1,\dots,d_r$.
Here \(\alpha\) is the row index inside a block and \(j_r\) is the block label.
\\
The entries of $L_{M^r}(\Sigma)=\sum_{j_r=1}^{d_r} M^r_{j_r}\Sigma (M_{j_r}^r)^*$ are $(L_{M^r}(\Sigma))_{\alpha\beta}
=
\sum_{j_r=1}^{d_r}
\sum_{\nu, \eta=1}^{D_r}
M^r_{(\alpha,b),\nu}\,
\Sigma_{\nu \eta}\,
\overline{M^r_{(\beta,b),\eta}}$.
\\
For a cycle type $\kappa=(\kappa_1,\dots,\kappa_m)$ $\ell(\kappa)=m$ the number of cycles. If $\pi \in S_{a_{r-1}}$ has cycle type $\kappa$, we define $\ell(\pi):=\ell(\kappa)$. By the Frobenius character formula \cite{Stanley_2024}, 
\begin{equation}\label{eq:frobenius}
s_\lambda(L_{M^r}(\Sigma))
=
\frac1{(a_{r-1})!}
\sum_{\pi\in S_{a_{r-1}}}
\chi^\lambda(\pi)p_\pi(L_{M^r}(\Sigma)),
\end{equation}
where, for \(\pi\in S_{a_{r-1}}\), $p_{\pi}(L_{M^r}(\Sigma))$ is the power-sum (or trace expansion)
\begin{equation*}
    p_\pi(L_{M^r}(\Sigma))
= \prod_{\text{cycles c of $\pi$}}\tr(L_{M^r}(\Sigma)^{|c|}) 
=
\sum_{\gamma_1, \dots, \gamma_{a_{r-1}}=1}^{D_{r-1}}
\prod_{s=1}^{a_{r-1}}
L_{M^r}(\Sigma)_{\gamma_s,\gamma_{\pi(s)}}.
\end{equation*}
For the reader's convenience, we abbreviate the sum over indices $\gamma_1, \dots, \gamma_{a_{r-1}}$ for each $\gamma_i$ ranging from $1$ to $D_{r-1}$ with $\sum_{\gamma=1}^{D_{r-1}}$. We use this shorthand for all following multiindices in this proof.
\\
Therefore,
\[
H_\lambda^{(r)}(\Sigma)
=
\frac1{a_{r-1}!}
\sum_{\pi\in S_{a_{r-1}}}
\chi^\lambda(\pi)
\int_{\St(D_r,n_r)}
p_\pi(L_{M^r}(\Sigma))\,d\mu(M^r).
\]
\\
Using the entries formula for \(L_{M^r}(\Sigma)\), the integrand becomes
\begin{align*}
    p_\pi(L_{M^r}(\Sigma))
&=
\sum_{\alpha=1}^{D_{r-1}}\sum_{j_r=1}^{d_r}\sum_{\nu, \eta=1}^{D_r}
\prod_{s=1}^{a_{r-1}}
M^r_{(\alpha_s,(j_r)_s),\nu_s}\,
\Sigma_{\nu_s\eta_s}\,
\overline{M^r_{(\alpha_{\pi(s)},(j_r)_s),\eta_s}} \\
&=
\sum_{\alpha=1}^{D_{r-1}}\sum_{j_r=1}^{d_r}\sum_{\nu, \eta=1}^{D_r}
\left(
\prod_{s=1}^{a_{r-1}} \Sigma_{\nu_s \eta_s}
\right)
\left(
\prod_{s=1}^{a_{r-1}}
M^r_{i_s,\nu_s}\,
\overline{M^r_{i'_s,\eta_s}}
\right),
\end{align*}
where we introduced combined indices $i_s =(\alpha_s,(j_r)_s)$ and $i_s'=(\alpha_{\pi(s)},(j_r)_s)$. Note that each index $\alpha, j_r, \nu, \eta$ is meant as multiindex, as introduced above.
\\
Since $\Sigma$ is fixed, we apply Proposition \ref{prop:stiefel_weingarten} and get
\begin{align*}
&\int_{\St(D_r,n_r)}
p_\pi(L_{M^r}(\Sigma))\,d\mu(M^r)  \\ 
&\qquad\qquad= \sum_{\alpha=1}^{D_{r-1}}\sum_{j_r=1}^{d_r}\sum_{\nu, \eta=1}^{D_r}
\left(
\prod_{s=1}^{a_{r-1}} \Sigma_{\nu_s \eta_s}
\right)
\int_{\St(D_r,n_r)}
\prod_{s=1}^{a_{r-1}}
M^r_{i_s,\nu_s}\,
\overline{M^r_{i'_s,\eta_s}}
\,d\mu(M^r)\\
&\qquad\qquad=
\sum_{\sigma,\tau\in S_{a_{r-1}}}
\Wg_{n_r}(\sigma^{-1}\tau)
\sum_{\alpha=1}^{D_{r-1}}\sum_{j_r=1}^{d_r}\sum_{\nu, \eta=1}^{D_r}
\left(
\prod_{s=1}^{a_{r-1}}
\delta_{i_s,i'_{\sigma(s)}}
\right)
\left(
\prod_{s=1}^{a_{r-1}}
\delta_{\nu_s,\eta_{\tau(s)}}
\right)
\left(
\prod_{s=1}^{a_{r-1}} \Sigma_{\nu_s\eta_s}
\right).
\end{align*}
\\
First, we consider the $\delta$-function $\delta_{i_s,i'_{\sigma(s)}}
=
\delta_{(\alpha_s,b_s),(\alpha_{\pi(\sigma(s))},b_{\sigma(s)})} = \delta_{\alpha_s,\alpha_{\pi(\sigma(s))}}\delta_{b_s,b_{\sigma(s)}}$. These are non-zero if and only if
$\alpha_s=\alpha_{\pi\sigma(s)}$ and $b_s=b_{\sigma(s)}$. The first condition identifies two \(\alpha\)-indices along the cycles of
\(\pi\sigma\) giving non-zero terms. For example, let $\pi\sigma = (134)(25)$, such that $\alpha_s = \alpha_{\pi\sigma(s)}$ implies that $\alpha_{1} =\alpha_{3} =\alpha_{4}$ and $\alpha_{2} = \alpha_{5}$. Therefore, the sum over all \(\alpha_1,\dots,\alpha_{a_{r-1}}\) gives
one free choice in \(\{1,\dots,D_{r-1}\}\) for each cycle of \(\pi\sigma\). The same argument holds for the condition $b_s=b_{\sigma(s)}$, hence we can write
\begin{equation*}
    \sum_{\alpha_1,\dots,\alpha_{a_{r-1}}=1}^{D_{r-1}}
\prod_{s=1}^{a_{r-1}}
\delta_{\alpha_s,\alpha_{\pi\sigma(s)}}
=
D_{r-1}^{\ell(\pi\sigma)}, \quad
\sum_{b_1,\dots,b_{a_{r-1}} =1}^{d_r}
\prod_{s=1}^{a_{r-1}}
\delta_{b_s,b_{\sigma(s)}}
=
d_r^{\ell(\sigma)}.
\end{equation*}
\\
It remains to simplify the summation over $\nu, \eta$. From $\prod_{s=1}^{a_{r-1}}\delta_{\nu_s,\eta_{\tau(s)}}$ we get non-zero contributions if and only if $\eta_s=\nu_{\tau^{-1}(s)}$ for every $
s=1,\dots,a_{r-1}$. Therefore,
\[
\sum_{\nu, \eta=1}^{D_r}
\prod_{s=1}^{a_{r-1}}
\Sigma_{\nu_s\eta_s}
\prod_{s=1}^{a_{r-1}}
\delta_{\nu_s,\eta_{\tau(s)}}
=
\sum_{\nu_1,\dots,\nu_{a_{r-1}}=1}^{D_{r-1}}
\prod_{s=1}^{a_{r-1}}
\Sigma_{\nu_s,\nu_{\tau^{-1}(s)}} = p_{\tau^{-1}}(\Sigma) = p_\tau(\Sigma) \, ,
\]
since \(\tau\) and
\(\tau^{-1}\) have the same cycles. Combining the above evaluations of $\delta$-functions, we obtain
\[
\int_{\St(D_r,n_r)}
p_\pi(L_{M^r}(\Sigma))\,d\mu(M^r)
=
\sum_{\sigma,\tau\in S_{a_{r-1}}}
D_{r-1}^{\ell(\pi\sigma)}
d_r^{\ell(\sigma)}
\Wg_{n_r}(\sigma^{-1}\tau)
p_\tau(\Sigma).
\]
\\
Substituting this into the Frobenius formula \eqref{eq:frobenius} gives
\[
H_\lambda^{(r)}(\Sigma)
=
\sum_{\sigma,\tau\in S_{a_{r-1}}}
\left[
\frac1{a_{r-1}!}
\sum_{\pi\in S_{a_{r-1}}}
\chi^\lambda(\pi)
D_{r-1}^{\ell(\pi\sigma)}
\right]
d_r^{\ell(\sigma)}
\Wg_{n_r}(\sigma^{-1}\tau)
p_\tau(\Sigma).
\]
We now simplify further the term in brackets. For any \(\rho\in S_{a_{r-1}}\) and the $D_{r-1}\times D_{r-1}$ identity matrix $1^{D_{r-1}}$ we write the power sum as $p_\rho(1^{D_{r-1}})
=
\prod_{\text{cycles }c\text{ of }\rho}
\tr((1^{D_{r-1}})^{|c|})$. But $\tr((1^{D_{r-1}})^{|c|})
=
1^{|c|}+\cdots+1^{|c|}
=
D_{r-1}$. Therefore, $p_\rho(1^{D_{r-1}})
=
D_{r-1}^{\ell(\rho)}$.
Using the inverse Frobenius relation \cite{Stanley_2024}
\begin{equation}\label{eq:inverse_frobenius}
p_\rho
=
\sum_{\eta\vdash |\rho|}
\chi^\eta(\rho)s_\eta,
\end{equation}
we get $D_{r-1}^{\ell(\rho)}
=
p_\rho(1^{D_{r-1}})
=
\sum_{\eta\vdash |\rho|}
\chi^\eta(\rho)s_\eta(1^{D_{r-1}})$. Applying it to \(\rho=\pi\sigma\), this gives $D_{r-1}^{\ell(\pi\sigma)}
=
\sum_{\eta\vdash a_{r-1}}
\chi^\eta(\pi\sigma)s_\eta(1^{D_{r-1}})$.
Hence
\[
\frac1{a_{r-1}!}
\sum_{\pi\in S_{a_{r-1}}}
\chi^\lambda(\pi)
D_{r-1}^{\ell(\pi\sigma)}
=
\frac1{a_{r-1}!}
\sum_{\eta\vdash a_{r-1}}
s_\eta(1^{D_{r-1}})
\sum_{\pi\in S_{a_{r-1}}}
\chi^\lambda(\pi)\chi^\eta(\pi\sigma).
\]
We use the character orthogonality relation \cite{Stanley_2024},
\begin{equation}\label{eq:ortho}
    \sum_{\pi\in S_{a_{r-1}}}
\chi^\lambda(\pi)\chi^\eta(\pi\sigma)
=
\frac{a_{r-1}!}{f^\lambda}
\delta_{\lambda,\eta}\chi^\lambda(\sigma),
\end{equation}
to simplify the bracketed term to
\[
\frac1{a_{r-1}!}
\sum_{\pi\in S_{a_{r-1}}}
\chi^\lambda(\pi)
D_{r-1}^{\ell(\pi\sigma)}
=
\frac{s_\lambda(1^{D_{r-1}})}{f^\lambda}
\chi^\lambda(\sigma).
\]
Substituting this into \eqref{eq:H_function} we obtain
\[
H_\lambda^{(r)}(\Sigma)
=
\frac{s_\lambda(1^{D_{r-1}})}{f^\lambda}
\sum_{\sigma,\tau\in S_{a_{r-1}}}
d_r^{\ell(\sigma)}
\chi^\lambda(\sigma)
\Wg_{n_r}(\sigma^{-1}\tau)
p_\tau(\Sigma).
\]
We need to express this function in terms of Schur polynomials. Therefore, we convert \(p_\tau(\Sigma)\) back to Schur functions using the inverse Frobenius formula \eqref{eq:inverse_frobenius}. The coefficient of \(s_\mu(\Sigma)\) is
\[
h_{\lambda,\mu}^{(r)}
=
\frac{s_\lambda(1^{D_r-1})}{f^\lambda}
\sum_{\sigma,\tau\in S_{a_{r-1}}}
d_r^{\ell(\sigma)}
\chi^\lambda(\sigma)
\Wg_{n_r}(\sigma^{-1}\tau)
\chi^\mu(\tau).
\]
Next, we use the Weingarten character identity
\begin{equation}\label{eq:weingarten_id}
    \sum_{\tau\in S_{a_{r-1}}}
    \Wg_{n_r}(\sigma^{-1}\tau)\chi^\mu(\tau)
    =
    \frac{\chi^\mu(\sigma)}{C_\mu(n_r)}.
\end{equation}
This can be derived by noting that the unitary Weingarten function has the
character expansion \cite{Collins_2006,Collins_2017,Collins_2022}
\[
\Wg_{n_r}(\rho)
=
\frac1{a_{r-1}!}
\sum_{\substack{\eta\vdash a_{r-1}\\ \ell(\eta)\le n_r}}
\frac{f^\eta}{C_\eta(n_r)}
\chi^\eta(\rho),
\]
such that
\[
\sum_{\tau\in S_{a_{r-1}}}
\Wg_{n_r}(\sigma^{-1}\tau)\chi^\mu(\tau)
=
\frac1{a_{r-1}!}
\sum_{\substack{\eta\vdash a_{r-1}\\ \ell(\eta)\le n_r}}
\frac{f^\eta}{C_\eta(n_r)}
\sum_{\tau\in S_{a_{r-1}}}
\chi^\eta(\sigma^{-1}\tau)\chi^\mu(\tau).
\]
The result \eqref{eq:weingarten_id} follows immediately by using character orthogonality \eqref{eq:ortho}. Using this, we get
\begin{equation}\label{eq:h_coeff_proof}
    h_{\lambda,\mu}^{(r)}
=
\frac{s_\lambda(1^{D_r-1})}
{f^\lambda C_\mu(n_r)}
\sum_{\sigma\in S_{a_{r-1}}}
d_r^{\ell(\sigma)}
\chi^\lambda(\sigma)
\chi^\mu(\sigma).
\end{equation}
Since $\ell(\sigma)=\ell(\kappa)$ and $\chi^\lambda(\sigma) = \chi^\lambda(\kappa)$ are both constant on the conjugacy class of $\sigma$, the summands in \eqref{eq:h_coeff_proof} only depend on its cycle type $\kappa$. Let \(m_q(\kappa)\) be the number of cycles of length $q$ for a permutation in $S_{a_{r-1}}$ with cycle type $\kappa$. The number of  permutations of cycle
type $\kappa$ is
\begin{equation}\label{eq:C_k}
    |\mathcal C_\kappa|
=
\frac{a_{r-1}!}{z_\kappa},
\qquad
z_\kappa
=
\prod_{q\ge 1}q^{m_q(\kappa)}m_q(\kappa)!.
\end{equation}
Using the above observation, \eqref{eq:h_coeff_proof} can be written as
\[
h_{\lambda,\mu}^{(r)}
=
\frac{s_\lambda(1^{D_{r-1}})}
{f^\lambda C_\mu(n_r)}
\sum_{\kappa\vdash a_{r-1}}
|\mathcal C_\kappa|\,
d_r^{\ell(\kappa)}
\chi^\lambda(\kappa)\chi^\mu(\kappa). \qedhere
\]
\end{proof}
\begin{remark}
Realizing that the sum in \eqref{eq:h_coeff_proof} only depends on cycle types is computationally very important. Instead of computing characters of all $a_{r-1}!$ permutations we only need to consider the possible partitions $\kappa \vdash a_{r-1}$. To illustrate the difference in complexity, we compare in Table \ref{tab:comparison} how the number of partitions $P(a_{r-1})$ of $a_{r-1}$ and the number of permutations in $S_{a_{r-1}}$ scale. 
\begin{table}[H]
\begin{center}
\begin{tabular}{|c r r|} 
\hline
 $a_{r-1}$ & $a_{r-1}!$ & $P(a_{r-1})$ \\ [0.5ex] 
 \hline
 3  & 6 & 3 \\ 
 4  & 24 & 5 \\
 5  & 120 & 7 \\
 6  & 720 & 11 \\
 7  & 5040 & 15 \\
 8  & 40320 & 22 \\
 10 & 3628800 & 42 \\
 15 & 1307674368000 & 176 \\
 20 & 2432902008176640000 & 627 \\ [1ex] 
 \hline
\end{tabular}
\caption{Comparison of factorial growth and partition numbers.}
\label{tab:comparison}
\end{center}
\end{table}
\end{remark}

\noindent
We now employ Theorem \ref{thm:H_coefficients} to obtain recursively the Schur expansion of the last tail factor. Throughout we will use the following Lemma. 
\begin{lemma}\label{lemma:det_shift}
    Let $\Sigma$ be a $D\times D$ matrix with eigenvalues $t:=(t_1, ..., t_D)$ and $m \in \N$. For a partition $\lambda \vdash a$ with length $\ell(\lambda) \leq D$ we have
    \begin{equation*}
        \det(\Sigma)^ms_\lambda(\Sigma) = s_{\lambda + (m^D)},
    \end{equation*}
    where $(m^D) = (m, ..., m)$, $\lambda + (m^D) = (\lambda_1 + m, ..., \lambda_D +m)$ and $\lambda$ is padded with zeros to length $D$.
\end{lemma}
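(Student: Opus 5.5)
The plan is to prove the identity directly from the bialternant formula \eqref{eq:schur}, evaluated at the eigenvalues $t=(t_1,\dots,t_D)$ of $\Sigma$. We use $\det(\Sigma)=\prod_{i=1}^D t_i$ and $s_\lambda(\Sigma)=s_\lambda(t)$. It therefore suffices to show the polynomial identity
\[
\Bigl(\prod_{i=1}^D t_i\Bigr)^m s_\lambda(t) = s_{\lambda+(m^D)}(t)
\]
in $D$ variables. First we pad $\lambda$ with zeros to length $D$, which is allowed since $\ell(\lambda)\le D$. Then $\lambda+(m^D)$ is again a partition with at most $D$ parts, so both sides are Schur polynomials in the same number of variables.

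The key step is to look at the numerator alternant $\det\bigl(t_i^{\lambda_j+D-j}\bigr)_{i,j=1}^D$; this is the intended meaning of the exponent in \eqref{eq:schur}. Replacing $\lambda_j$ by $\lambda_j+m$ multiplies every entry in row $i$ by $t_i^m$. By multilinearity of the determinant in its rows, we pull out $t_i^m$ from each row:
\[
\det\bigl(t_i^{\lambda_j+m+D-j}\bigr)=\Bigl(\prod_{i=1}^D t_i^m\Bigr)\det\bigl(t_i^{\lambda_j+D-j}\bigr).
\]
The Vandermonde denominator $\det(t_i^{D-j})$ is unchanged. Dividing gives the claim for pairwise distinct $t_i$.

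Finally, both sides are polynomials in $t$, so the identity extends to all $t$ by continuity, i.e.\ density of tuples with distinct coordinates. Equivalently, it holds as an identity of symmetric polynomials and hence as a function of $\Sigma$ through its eigenvalues.

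No step is a real obstacle. The only care needed is the indexing convention in \eqref{eq:schur}: the exponent should read $\lambda_j+D-j$. We also need the padding convention, so that the shift by $m$ applies to all $D$ rows, including the zero parts of $\lambda$.
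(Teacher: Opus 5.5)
Your proof is correct, but it takes a different route from the paper. The paper works with the dual Jacobi--Trudi identity $s_\mu=\det(e_{\mu'_i+j-i})_{i,j=1}^{\mu_1}$ for $\mu=\lambda+(m^D)$. It computes $\mu'=(D,\dots,D,\lambda'_1,\dots,\lambda'_{\lambda_1})$, with $m$ leading entries equal to $D$. It then uses $e_k(t_1,\dots,t_D)=0$ for $k>D$ to see that the matrix is block lower triangular: an $m\times m$ lower-triangular top-left block with diagonal entries $e_D=\det(\Sigma)$, and a bottom-right block equal to the Jacobi--Trudi matrix of $\lambda$.

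Your approach, pulling $t_i^m$ out of row $i$ of the numerator alternant $\det(t_i^{\lambda_j+D-j})$, is shorter. It works directly from the bialternant definition \eqref{eq:schur} that the paper itself states, and you are right that its exponent should read $\lambda_j+D-j$. Its only extra cost is passing from distinct to arbitrary eigenvalues, which your density argument handles. Alternatively, the identity $\det(t_i^{D-j})\,\bigl(s_{\lambda+(m^D)}(t)-e_D(t)^m s_\lambda(t)\bigr)=0$ holds in the integral domain $\mathbb{Z}[t_1,\dots,t_D]$, where the Vandermonde factor is nonzero, so it can be cancelled.

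The paper's route is division-free from the outset, but it depends on the dual Jacobi--Trudi theorem and some conjugate-partition bookkeeping. There, the truncation $e_k=0$ for $k>D$ plays the role that, in your proof, is played by the $D\times D$ size of the alternant and hence by the zero-padding of $\lambda$.
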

\begin{proof}We can write $\det(\Sigma) = e_D(t_1, ..., t_D)$, where $e_i$ for $i\in\N$ is the elementary symmetric polynomial. 
For the partition $\lambda$ we define $\mu := \lambda + (m^D)$. The conjugate partition $\lambda'$ has $\lambda_1$ rows, that is, the length is $\ell(\lambda') = \lambda_1$.
We employ the second Jacobi-Trudi formula \cite{Fulton_1991}
\begin{equation*}
    s_\mu = \det(e_{\mu'_i+j-i})_{i,j=1}^{\mu_1}.
\end{equation*}
Note that $\mu_k' = \#\{i\,:\,\mu_i \geq k\}$. For $1\leq k \leq m$, we have $\mu_i = \lambda_i + m \geq m \geq k$, hence $\mu_k' = D$. Therefore, the conjugate partition to $\mu$ is $\mu' = (D, ..., D, \lambda_1', ..., \lambda_{\lambda_1}')$.
Let us write the matrix of the determinant explicitly:
\begin{equation*}
(e_{\mu'_i+j-i})_{i,j=1}^{\mu_1} =
    \begin{bmatrix}
        A & 0 \\
        * & B
    \end{bmatrix},
\end{equation*}
where $A$ is a $m\times m$ and $B$ a $(\mu_1-m)\times (\mu_1-m)$ matrix. The lower triangular structure comes from the fact that for $i \leq m$ and $m < j \leq \mu_1$ we have $j-i \geq 1$ and $\mu_i' = D$. Therefore, the index $\mu'_i+j-i > D$. But for any elementary symmetric function $e_i(t_1, ..., t_D)$ we have $e_i = 0$ if $i > D$. By the same argument, the block $A$ is also  lower triangular with diagonal elements equal to $e_D$, hence $\det(A) = (e_D)^m$. For the block $B$ note that $m < i, j \leq \lambda_1 + m$, therefore $B = (e_{\lambda'_i+j-i})_{i,j=1}^{\lambda_1}$ since $\mu_{i+m}' = \lambda_i'$. It follows that 
\begin{equation*}
    s_{\lambda + (m^D)}=\det(e_{\mu'_i+j-i})_{i,j=1}^{\mu_1} = \det(A)\det(B) = (e_D)^ms_\lambda(\Sigma). \qedhere
\end{equation*}
\end{proof}
Lemma \ref{lemma:det_shift} establishes how the determinant factor in \eqref{eq:F_recursion} shifts the partition of Schur functions. We can finally write the recursion explicitly:
\paragraph{Step $r=0,1$} The recursion starts trivially with $F_0=1$. Also $F_1(\Sigma)
=
\det(\Sigma)^{m_1} = s_{(m_1^{D_1})}(\Sigma).
$
\paragraph{Step $r=2$} 
The first nontrivial step is to expand using \eqref{eq:H_function_exp}
\[
H_{(m_1^{D_1})}^{(2)}(\Sigma) = \sum_{\substack{\mu\vdash a_1\\ \ell(\mu)\le D_2}}
h_{(m_1^{D_1}),\mu}^{(2)}
s_\mu(\Sigma).
\]
Together with the determinant factor we have
\[
F_2(\Sigma)
=
\det(\Sigma)^{m_2}H_{(m_1^{D_1})}^{(2)}(\Sigma) = \sum_{\substack{\mu\vdash a_1\\ \ell(\mu)\le D_2}}
h_{(m_1^{D_1}),\mu}^{(2)}
s_{\mu+(m_2^{D_2})}(\Sigma),
\]
where we used the determinantal shift from Lemma \ref{lemma:det_shift}.

\paragraph{Step $r>2$} 
Assume that, for some \(r\ge 2\), we already have
\[
F_{r-1}(\Sigma)
=
\sum_{\substack{\lambda\vdash a_{r-1}\\ \ell(\lambda)\le D_{r-1}}}
c_\lambda^{(r-1)}s_\lambda(\Sigma).
\]
Then by linearity of integration, we obtain the form
\begin{align*}
    \int_{\St(D_r, n_r)}\sum_{\substack{\lambda\vdash a_{r-1}\\ \ell(\lambda)\le D_{r-1}}}
c_\lambda^{(r-1)}s_\lambda(L_{M^r}(\Sigma))d\mu(M^r)
&= \sum_{\substack{\lambda\vdash a_{r-1}\\ \ell(\lambda)\le D_{r-1}}}
c_\lambda^{(r-1)}
H_\lambda^{(r)}(\Sigma) = \sum_{\substack{\mu\vdash a_{r-1}\\ \ell(\mu)\le D_r}}
g_\mu^{(r)}s_\mu(\Sigma)\,,
\end{align*}
where we define
\begin{equation}\label{eq:g_coeff}
    g_\mu^{(r)}
:=
\sum_{\substack{\lambda\vdash a_{r-1}\\ \ell(\lambda)\le D_{r-1}}}
c_\lambda^{(r-1)}h_{\lambda,\mu}^{(r)}
\,.
\end{equation}
\\
Finally, using again the determinantal shift of partitions, we have
\[
F_r(\Sigma)
=
\det(\Sigma)^{m_r}\sum_{\substack{\mu\vdash a_{r-1}\\ \ell(\mu)\le D_r}}
g_\mu^{(r)}s_\mu(\Sigma) = \sum_{\substack{\mu\vdash a_{r-1}\\ \ell(\mu)\le D_r}}
g_\mu^{(r)}
s_{\mu+(m_r^{D_r})}(\Sigma).
\]
We conclude the recursion at $r = N-1$ with the final tail polynomial
\begin{equation}\label{eq:schur_exp_F}
    F_{N-1}(CC^*)
=
\sum_{\substack{\lambda\vdash a_{N-1}\\ \ell(\lambda)\le D_{N-1}}}
c_\lambda^{(N-1)}s_\lambda(\Sigma)
=
\sum_{\substack{\mu\vdash a_{N-2}\\ \ell(\mu)\le D_{N-1}}}
g_\mu^{(N-1)}
s_{\mu+(m_{N-1}^{D_{N-1}})}(CC^*).
\end{equation}

\section{Evaluating the degree}\label{sec:degree}
In Section \ref{sec:integration} we have expanded the last tail factor $F_{N-1}(CC^*)$ in the basis of Schur polynomials. In this section we write $F_{N-1}(CC^*)=P_{D,d}(C)$, emphasizing its polynomiality. It remains to integrate this factor over the tail variety $X_N$ as in Corollary \ref{cor:full-recursive-volume}.
Since the complement of $X_N = \{[C] \in \P(\C^{D_{N-1}\times d_N}):\rank(C) = D_{N-1}\}$ in $\P(\C^{D_{N-1}\times d_N}) =: \P^K$, that is, the rank-deficient matrices, has measure zero, we can equivalently integrate over $\P^K$.
Let us write the integral in \eqref{eq:full-recursive-volume} as
\begin{equation}\label{eq:int_with_exp}
\int_{\P^K}P_{D,d}(C)\,d\vol_{\P^K}([C]) = \vol(\P^K) \Lambda(D, d) 
\end{equation}
with $\Lambda(D, d) = \mathbb{E}_{[C] \sim \mathrm{Unif}(\P^K)}\Big[P_{D,d}(C)\Big]$. We say that a random matrix $C \in \P^K$ is a \emph{standard complex Gaussian matrix}, written $C \sim \mathcal N_{\C}(0,1)$, if its entries are i.i.d. standard complex Gaussians: if $C=(x_{rs})$ and $x_{rs}=\alpha_{rs}+i\beta_{rs}$, then $\alpha_{rs}, \beta_{rs}\sim \mathcal N(0,1/2)$. If $C \sim \mathcal N_{\C}(0,1)$, its Frobenius squared norm is distributed as $\frac{1}{2}\chi^2_{2(K+1)}$, where $\chi^2_{2(K+1)}$ is the $\chi^2$-\emph{distribution} with $2(K+1)$ degrees of freedom. Note that $C/\|C\| \sim \mathrm{Unif}(\mathbb S^{2K+1})$.  
\begin{lemma}\label{lemma:lambda_D}
    The expectation value in $\eqref{eq:int_with_exp}$ is given by
\begin{equation*}
    \Lambda(D, d) = \frac{(D_{N-1}d_N -1)!}{(a + D_{N-1}d_N-1)!}\cdot f(P_{D,d}),
\end{equation*}
where $f(P_{D,d})=\mathbb{E}_{C \sim \mathcal N_{\C}(0,1)}\Big[P_{D,d}(C)\Big]$ and $a:= a_{N-1}$.
\end{lemma}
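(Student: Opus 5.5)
The idea is to use that $P_{D,d}$ is bihomogeneous of bidegree $(a,a)$ (Proposition \ref{prop:polynomial_C}). On genuine matrices $C$ (with $y=\bar x$) it is therefore homogeneous of degree $2a$ under real scaling, and invariant under phase multiplication $C\mapsto e^{i\theta}C$. So $P_{D,d}$ descends to a function on the sphere that depends only on the projective class. We then compare the Gaussian expectation with the uniform average on $\P^K$ via polar coordinates.

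Write $n:=K+1=D_{N-1}d_N$ for the complex dimension of the space of matrices $C$. First, the pushforward of the uniform probability measure on $\mathbb S^{2K+1}$ under the Hopf map $\mathbb S^{2K+1}\to\P^K$ is the normalized Fubini--Study measure. This holds because both measures are $\U(K+1)$-invariant probability measures on a homogeneous space, hence coincide. Using the phase invariance of $P_{D,d}$, this gives
\[
\Lambda(D,d)=\mathbb E_{u\sim \mathrm{Unif}(\mathbb S^{2K+1})}\bigl[P_{D,d}(u)\bigr].
\]
Second, for $C\sim\mathcal N_{\C}(0,1)$ we decompose $C=\|C\|\,u$. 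Here $u=C/\|C\|$ is uniform on the sphere and independent of $\|C\|$, by rotational invariance of the Gaussian. Homogeneity gives $P_{D,d}(C)=\|C\|^{2a}P_{D,d}(u)$, and therefore
\[
f(P_{D,d})=\mathbb E\bigl[\|C\|^{2a}\bigr]\cdot \Lambda(D,d).
\]

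Third, we evaluate the moment. The quantity $\|C\|^2$ is distributed as $\tfrac12\chi^2_{2n}$, which is a Gamma$(n,1)$ random variable with density $t^{n-1}e^{-t}/(n-1)!$. Hence
\[
\mathbb E\bigl[\|C\|^{2a}\bigr]=\frac{\Gamma(n+a)}{\Gamma(n)}=\frac{(a+D_{N-1}d_N-1)!}{(D_{N-1}d_N-1)!}.
\]
Dividing yields the claimed expression for $\Lambda(D,d)$.

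The only delicate point is the first step: identifying the uniform measure on $\P^K$ with the Hopf pushforward of the sphere measure, and checking that $P_{D,d}(u)$ is genuinely well defined on the projective class. The latter needs bidegree $(a,a)$ exactly, so that $|\lambda|^{2a}=1$ for unit $\lambda$. Both facts follow from the invariance arguments above. The remaining computations are routine Gamma integrals.
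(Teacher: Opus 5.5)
Your proposal is correct and follows the paper's proof essentially step by step. You pass from the uniform average on $\P^K$ to the uniform average on $\mathbb S^{2K+1}$ using the $\U(1)$-invariance of $P_{D,d}$, then write a Gaussian matrix as an independent radius times a uniform point on the sphere, using homogeneity of degree $a$ in $CC^*$ and the moment $\mathbb E[\|C\|^{2a}]=(a+K)!/K!$. Your justification that the Hopf pushforward is the normalized Fubini--Study measure, via uniqueness of invariant probability measures, is a detail the paper leaves implicit.
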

\begin{proof}
Choose normalized representatives $\|T\| =1$ for $[T]\in \P^K$. Since $\mathbb S^{2K+1}
=
\{T\in \mathbb C^{K+1}:\|T\|=1\}$, we can identify $\P^K \simeq \mathbb S^{2K+1} / \U(1)$. Since $P_{D,d}$ depends only on $CC^*$, it is $\U(1)$-invariant and we can conclude that 
\begin{equation*}
    \underset{[C] \sim \mathrm{Unif}(\P^K)}{\mathbb{E}}\Big[P_{D,d}(C)\Big] = \underset{C \sim \mathrm{Unif}(\mathbb S^{2K+1})}{\mathbb{E}}\Big[P_{D,d}(C)\Big].
\end{equation*}
Now let $C\sim \mathrm{Unif}(\mathbb S^{2K+1})$ and $\rho \sim \frac{1}{2}\chi^2_{2(K+1)}$ be independent. Then $\sqrt{\rho}\cdot C\sim\mathcal N_{\C}(0,1)$ and since $P_{D,d}(C)$ is of degree $a$
\begin{align*}
    \underset{C \sim \mathrm{Unif}(\mathbb S^{2K+1})}{\mathbb{E}}
    \Big[P_{D,d}(C)\Big] = \Big(\underset{\rho \sim \frac{1}{2}\chi^2_{2(K+1)}}{\mathbb{E}}\Big[\rho^a\Big]\Big)^{-1} \,\underset{\substack{C \sim \mathrm{Unif}(\mathbb S^{2K+1}) \\ \rho \sim \frac{1}{2}\chi^2_{2(K+1)}}}{\mathbb{E}}\Big[P_{D,d}(\sqrt{\rho}\cdot C)\Big].
\end{align*}
Since $\mathbb{E}_{\rho \sim \frac{1}{2}\chi^2_{2(K+1)}}\Big[\rho^a\Big] = \frac{(a + K)!}{K!}$, we obtain
\begin{equation*}
    \Lambda(D, d) = \frac{K!}{(a+K)!}\,\underset{C \sim \mathcal{N}_{\C}(0,1)}{\mathbb{E}}\Big[P_{D,d}(C)\Big]\,.   \qedhere
\end{equation*}
\end{proof}
\newpage
\noindent
We can now prove the degree formula in Theorem \ref{thm:degree_formula}.
\begin{proof}Using Lemma \ref{lemma:lambda_D} and \eqref{eq:proj_volume}, the integral \eqref{eq:int_with_exp} becomes
\begin{align}\label{eq:projective_moment_formula}
\int_{\P^K}P_{D,d}(C)\,d\vol_{\P^K}([C]) &= \frac{\pi^K}{K!}\frac{K!}{(a + K)!}\,f(P_{D,d}) = \frac{\pi^K}{(a + K)!}\,f(P_{D,d}),
\end{align}
With \eqref{eq:projective_moment_formula}, the volume in \eqref{eq:full-recursive-volume} is
\begin{equation}\label{eq:volume_formula_f_polynomial}
\vol(V_{D,d})
=
\left(\prod_{r=1}^{N-1}\vol\bigl(\Gr(D_r,n_r)\bigr)\right)
\frac{\pi^K}{(K+a)!}\,f(P_{D,d}).
\end{equation}
Applying  \eqref{eq:kinematic_formula} to $\vol\bigl(\Gr(D_r,n_r)\bigr)$ 
together with $\dim_\C(V_{D,d})=K+a$, we obtain the degree formula
\begin{equation}\label{eq:degree_formula_f_polynomial}
\deg(V_{D,d}) = 
\frac{
\displaystyle\prod_{r=1}^{N-1}
\deg\bigl(\Gr(D_r,n_r)\bigr)
}{
\displaystyle\prod_{r=1}^{N-1}(D_r m_r)!
}
\;f(P_{D,d})\, . 
\end{equation}
\end{proof}
\vspace{-3mm}
\noindent
The degree of Grassmanians is given by \cite[Example 19.14]{harris_1992} 
\begin{equation}\label{eq:gr_degree}
    \deg(\Gr(k, n)) = ((k+1)(n-k))!\prod_{i=0}^{k}\frac{i!}{(n-k+1)!}.
\end{equation}

\subsection{Integration of Schur polynomials}
As seen in \eqref{eq:degree_formula_f_polynomial}, the degree of the TT variety now admits a compact formula, where only the function $f(P_{D,d})$ remains to be evaluated. Recall the Schur expansion in \eqref{eq:schur_exp_F}. By linearity of expectation we can write
\begin{equation}\label{eq:f_explicit}
    f(P_{D,d})=\underset{C \sim \mathcal N_{\C}(0,1)}{\mathbb{E}}\Big[P_{D,d}(C)\Big] = \sum_{\substack{\mu\vdash a_{N-2}\\ \ell(\mu)\le D_{N-1}}}
g_\mu^{(N-1)}\underset{C \sim \mathcal N_{\C}(0,1)}{\mathbb{E}}\Big[s_{\mu+(m_{N-1}^{D_{N-1}})}(CC^*)\Big].
\end{equation}
To evaluate the expectation of Schur polynomials over complex normally distributed matrices $C$, we prove the following result.
\begin{proposition}\label{prop:f_schur}
Let  \(C\in \mathbb C^{D_{N-1}\times d_N}\), \(D_{N-1}\le d_N\), be distributed as $\mathcal N_{\C}(0,1)$. Let
\(\lambda=(\lambda_1,\dots,\lambda_{D_{N-1}})\) be a partition, padded with zeros if
necessary. Then
\begin{equation*}
    \underset{C \sim \mathcal N_{\C}(0,1)}{\mathbb E}\Big[s_\lambda(CC^*)\Big]
=
s_\lambda(1^{D_{N-1}})
\prod_{i=1}^{D_{N-1}}
\frac{(\lambda_i+d_N-i)!}{(d_N-i)!}.
\end{equation*}
\end{proposition}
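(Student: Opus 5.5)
We compute the expectation by exploiting the unitary invariance of the complex Ginibre ensemble, and reduce it to an integral over eigenvalues. Write $D:=D_{N-1}$, $d:=d_N$ and $W:=CC^*$, a complex Wishart matrix with $d$ degrees of freedom. The argument has four steps.

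\emph{Step 1: character expansion of the expectation.}
Since $C\sim\mathcal N_\C(0,1)$, the law of $W$ is invariant under $W\mapsto UWU^*$ for all $U\in\U(D)$. For any fixed positive Hermitian $A$, consider $\mathbb E[s_\lambda(AW)]$. Averaging over $U$ and applying the standard Schur function identity
\[
\int_{\U(D)}s_\lambda(AUBU^*)\,dU=\frac{s_\lambda(A)s_\lambda(B)}{s_\lambda(1^D)}
\]
gives
\[
\mathbb E[s_\lambda(AW)]=\frac{s_\lambda(A)}{s_\lambda(1^D)}\,\mathbb E[s_\lambda(W)].
\]
Hence it suffices to compute one scalar, $\mathbb E[s_\lambda(W)]$.

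\emph{Step 2: cleanest route via a determinant.}
I would compute $\mathbb E[s_\lambda(W)]$ through the Weyl-type formula $s_\lambda(W)=\det(t_i^{\lambda_j+D-j})/\Delta(t)$, where $t$ are the eigenvalues of $W$. Their joint density for the complex Wishart ensemble ($D\le d$) is proportional to
\[
\Delta(t)^2\prod_i t_i^{\,d-D}e^{-t_i}.
\]
One factor of $\Delta$ cancels. Writing the remaining $\Delta(t)=\det(t_i^{D-k})$ and applying the Andréief identity gives
\[
\mathbb E[s_\lambda(W)]=\frac{1}{Z}\det\Bigl(\Gamma(\lambda_j+D-j+D-k+d-D+1)\Bigr)_{j,k}
=\frac{1}{Z}\det\bigl((\lambda_j+d-j+D-k)!\bigr)_{j,k}.
\]
Here $Z$ is the same determinant with $\lambda=0$.

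\emph{Step 3: evaluating the determinant (the main obstacle).}
Set $x_j:=\lambda_j+d-j$. Then $(x_j+D-k)!=x_j!\,(x_j+1)\cdots(x_j+D-k)$, and the product $(x_j+1)\cdots(x_j+D-k)$ is a monic polynomial of degree $D-k$ in $x_j$. Column operations reduce the matrix to $\prod_j x_j!\cdot\det(x_j^{D-k})=\prod_j x_j!\cdot\Delta(x)$. Dividing by the same expression at $\lambda=0$ yields
\[
\mathbb E[s_\lambda(W)]=\prod_i\frac{(\lambda_i+d-i)!}{(d-i)!}\cdot\frac{\Delta(\lambda_i+d-i)}{\Delta(d-i)}.
\]
Since the $\Delta$ factors are translation invariant, the Vandermonde ratio $\Delta(\lambda_i+d-i)/\Delta(d-i)$ equals $\Delta(\lambda_i-i)/\Delta(-i)$. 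By the Weyl dimension formula this is exactly $s_\lambda(1^D)=\prod_{i<j}\frac{\lambda_i-\lambda_j+j-i}{j-i}$. This gives the claimed formula.

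\emph{Step 4: checks.}
Care is needed with the normalization $Z$, and with whether the density exponent $d-D$ and the variance convention $\mathbb E|x|^2=1$ give the weight $e^{-t}$. With $\alpha,\beta\sim\mathcal N(0,1/2)$ they do. As a sanity check, $\lambda=(1)$ gives $D\cdot d=\mathbb E\operatorname{tr}W$. Alternatively, one could avoid the eigenvalue density altogether by expanding $s_\lambda(CC^*)$ through Cauchy–Littlewood and Wick calculus, but the Andréief route is the most direct.
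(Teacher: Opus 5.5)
Your proposal is correct and follows essentially the same route as the paper: the eigenvalue density $\Delta(t)^2\prod_i t_i^{d-D}e^{-t_i}$ of the complex Wishart ensemble, the bialternant formula for $s_\lambda$, Andr\'eief's identity, reduction of the Gamma-function determinant to a Vandermonde via monic rising-factorial polynomials, and identification of the Vandermonde ratio with $s_\lambda(1^{D})$. Your Step 1 (averaging $s_\lambda(AW)$ over $\U(D)$) is not needed, since the statement only concerns $A=I$, though it harmlessly extends the result to non-identity covariance.
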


\begin{proof}
Let \(W=CC^*\) with $C \sim \mathcal N_{\C}(0,1)$. Then \(W\) has the complex central Wishart distribution
\(\mathcal{CW}_{D_{N-1}}(I_{D_{N-1}},d_N)\). Its density is \cite{goodman_1963}
\[
d\nu(W)
\propto e^{-\operatorname{tr}(W)}
\det(W)^{d_N-D_{N-1}}\,dW,
\]
where we omit the normalization factor. Since \(s_\lambda(W)\) is a function of the eigenvalues $t := (t_1,\dots,t_{D_{N-1}})$ of \(W\), we diagonalize \(W=U\operatorname{diag}(t_1,\dots,t_{D_{N-1}})U^*\) for some $U\in\U(D_{N-1})$.
By the Weyl integration formula for Hermitian matrices \cite[Theorem 10.1.1]{Faraut_2008}, diagonalizing $W$ changes the measure density (up to a constant factor) to 
\[
dW
\propto\Delta(t)^2\,d\mu_{\U(D_{N-1})}(U)\,dt_1\cdots dt_{D_{N-1}},
\]
where $\Delta(t)=\prod_{1\le i<j\le D_{N-1}}(t_i-t_j)$ and $d\mu_{\U(D_{N-1})}(U)$ is the normalized Haar measure on unitary matrices. In terms of the eigenvalues, the Wishart density becomes 
\begin{equation*}
    d\nu(W)
\propto \Delta(t)^2\,d\mu_{\U(D_{N-1})}(U)\prod_{i=1}^{D_{N-1}} t_i^{d_N-D_{N-1}}e^{-t_i}\,dt_i.
\end{equation*}
Define
\begin{equation*}
    I_\lambda
\propto
\int_{[0,\infty)^{D_{N-1}}}
s_\lambda(t)\Delta(t)^2
 \prod_{i=1}^{D_{N-1}} x_i^{d_N-D_{N-1}}e^{-t_i}\,dt_i,
\end{equation*}
and let $I_0$ be the normalization constant of the measure. Then $\mathbb E_{C \sim N_{\C}(0,1)}\Big[s_\lambda(CC^*)\Big] = I_\lambda/I_0$.
\\
We evaluate \(I_\lambda\) using Andréief's identity \cite{andreief_1886}:
if \(\phi_1,\dots,\phi_{D_{N-1}}\) and \(\psi_1,\dots,\psi_{D_{N-1}}\) are integrable, then
\begin{equation}\label{eq:andreief}
    \int
\det(\phi_j(t_i))_{i,j=1}^{D_{N-1}}
\det(\psi_k(t_i))_{i,k=1}^{D_{N-1}}
\prod_{i=1}^{D_{N-1}} d\rho(t_i)
=
D_{N-1}!\det\left(
\int \phi_j(t)\psi_k(t)\,d\rho(t)
\right)_{j,k=1}^{D_{N-1}}.
\end{equation}
By Jacobi's bialternant formula \cite{Macdonald_1995}
\begin{align}\label{eq:s_delta}
s_\lambda(t)\Delta(t)
=&
\det\left(t_i^{\lambda_j+D_{N-1}-j}\right)_{i,j=1}^{D_{N-1}}\ ,\\ \label{eq:delta}
\Delta(t)
=&
\det\left(t_i^{D_{N-1}-k}\right)_{i,k=1}^{D_{N-1}}\ ,
\end{align} 
Applying \eqref{eq:andreief} with $d\rho(t)=t^{d_N-D_{N-1}}e^{-t}\,dt$ and identifying $\det(\phi_j(t_i))_{i,j=1}^{D_{N-1}}$ and $\det(\psi_k(t_i))_{i,k=1}^{D_{N-1}}$ with \eqref{eq:s_delta} and \eqref{eq:delta}, respectively,
we obtain
\begin{align*}
    I_\lambda
&=
D_{N-1}!\det\left(
\int_0^\infty
t^{\lambda_j-j+D_{N-1}-k+d_N}e^{-t}\,dt
\right)_{j,k=1}^{D_{N-1}} \\
&= D_{N-1}!\det\left(\Gamma(\lambda_j+d_N+D_{N-1}-j-k+1)\right)_{j,k=1}^{D_{N-1}},
\end{align*}
where we additionally identified the Gamma integral.
Set $g_j:=\lambda_j+d_N-j+1$, such that 
\[
\Gamma(\lambda_j+d_N+D_{N-1}-j-k+1)
=
\Gamma(g_j+D_{N-1}-k)
=
\Gamma(g_j)(g_j)_{D_{N-1}-k},
\]
where $(g_j)_{D_{N-1}-k} = g_j(g_j + 1)\cdots (g_j + D_{N-1}-k -1)$ is the rising factorial.
Therefore
\[
I_\lambda
=
D_{N-1}!
\left(\prod_{j=1}^{D_{N-1}} \Gamma(\lambda_j+d_N-j+1)\right)
\det\left((g_j)_{D_{N-1}-k}\right)_{j,k=1}^{D_{N-1}}.
\]
Since \((g_j)_{D_{N-1}-k}\) is monic of degree \(D_{N-1}-k\), its determinant is the
Vandermonde determinant
\[
\det\left((g_j)_{D_{N-1}-k}\right)_{j,k=1}^{D_{r-1}}
=
\prod_{1\le i<j\le D_{r-1}}(g_i-g_j).
\]
Using $g_i-g_j=\lambda_i-\lambda_j+j-i$, we get
\begin{align*}
    I_\lambda
&=
D_{N-1}!
\prod_{j=1}^{D_{N-1}} \Gamma(\lambda_j+d_N-j+1)
\prod_{1\le i<j\le D_{N-1}}
(\lambda_i-\lambda_j+j-i), \\
I_0
&=
D_{N-1}!
\prod_{j=1}^{D_{N-1}} \Gamma(d_N-j+1)
\prod_{1\le i<j\le D_{N-1}}
(j-i).
\end{align*}
Thus
\[
\frac{I_\lambda}{I_0}
=
\prod_{j=1}^{D_{N-1}}
\frac{\Gamma(\lambda_j+d_N-j+1)}{\Gamma(d_N-j+1)}
\prod_{1\le i<j\le D_{N-1}}
\frac{\lambda_i-\lambda_j+j-i}{j-i}.
\]
For the Schur polynomial evaluated at the $D_{N-1}\times D_{N-1}$ identity matrix we have \cite{Fulton_1991}
\begin{equation}\label{eq:schur_identity}
s_\lambda(1^{D_{N-1}})
=
\prod_{1\le i<j\le D_{N-1}}
\frac{\lambda_i-\lambda_j+j-i}{j-i}.
\end{equation}
Writing the Gamma functions explicitly we get
\[
\underset{C \sim \mathcal N_{\C}(0,1)}{\mathbb{E}}\Big[s_{\lambda}(CC^*)\Big]
=
s_\lambda(1^{D_{N-1}})
\prod_{j=1}^{D_{N-1}}
\frac{(\lambda_j+d_N-j)!}{(d_N-j)!}. \qedhere
\]
\end{proof}
\noindent
We conclude the derivation by combining \eqref{eq:f_explicit} and \eqref{eq:g_coeff}:
\begin{equation}\label{eq:super_long_f}
        f(P_{D,d}) = \sum_{\substack{\mu, \lambda\vdash a_{N-2}\\ \ell(\mu)\le D_{N-1} \\
        \ell(\lambda)\le D_{N-2}}}
c_\lambda^{(N-2)}h_{\lambda,\mu}^{(N-1)}s_{\mu+(m_{N-1}^{D_{N-1}})}(1^{D_{N-1}})
\prod_{j=1}^{D_{N-1}}
\frac{((\mu+(m_{N-1}^{D_{N-1}}))_j+d_N-j)!}{(d_N-j)!}.
\end{equation}
For the reader's convenience, let us recall the constituents of this expression: the sum runs over partitions $\lambda, \mu$ of an integer $a_{N-2}$ (see \eqref{eq:a_r}) of different lengths. The coefficients $c_\lambda^{(N-2)}$ are rational numbers coming from the second-to-last tail polynomial (see \eqref{eq:c}), while $h_{\lambda,\mu}^{(N-1)}$ is given in \eqref{eq:h_coeff}. The Schur polynomial at the identity $s_{\mu+(m_{N-1}^{D_{N-1}})}(1^{D_{N-1}})$ evaluates to an integer (see \eqref{eq:schur_identity}). We thus see that $f(P_{D,d})$ has a fully combinatorial form, depending only on the signature $(D,d)$. This completes the proof of Theorem \ref{thm:degree_formula}.

\section{Algorithmic implementation}\label{sec:implementation}
In this section we illustrate the use of the developed theory. We implemented our algorithm in the \texttt{julia} package \texttt{TTVarietyDegree.jl}. We first outline the general use of the package and we mention the used function calls along the derivation. Furthermore, we illustrate the degree computation for a specific example with $N=4$. Finally, we conclude the section with a comparison to independent numerical and symbolic methods.\\

Given a TT signature $(D,d)$, we first compute the Schur expansion of the last tail polynomial $F_{N-1}(CC^*)$. This is achieved by calling \texttt{compute\_tail\_polynomial(D,d)}, which executes the recursion outlined in Section \ref{sec:integration}.
The recursion starts with $F_0=1$. For $r=1,\dots,N-1$, we compute~\eqref{eq:F_recursion}
\begin{align*}
    F_r(\Sigma)
&=
\det(\Sigma)^{m_r}\int_{\St(D_r, n_r)}F_{r-1}(L_{M^r}(\Sigma))d\mu(M^r) \\
&= \det(\Sigma)^{m_r}\int_{\St(D_r, n_r)}\sum_{\substack{\lambda\vdash a_{r-1}\\ \ell(\lambda)\le D_{r-1}}}
c_\lambda^{(r-1)}s_\lambda(L_{M^r}(\Sigma))d\mu(M^r) \\
&=\det(\Sigma)^{m_r}\sum_{\substack{\lambda\vdash a_{r-1}\\ \ell(\lambda)\le D_{r-1}}}
c_\lambda^{(r-1)}H^{(r)}_\lambda(\Sigma),
\end{align*}
where the second equality follows from the Schur expansion of $F_{r-1}$ from the previous recursion step and the third from \eqref{eq:H_function}.
The function $H^{(r)}_\lambda(\Sigma)$ is computed via \texttt{compute\_H\_schur(lambda, D\_prev, D\_cur, d\_r)}, which evaluates \eqref{eq:h_coeff} and returns the Schur expansion 
\[
H_\lambda^{(r)}(\Sigma)
=
\sum_\mu h_{\lambda,\mu}^{(r)}s_\mu(\Sigma).
\]
After this, the function \texttt{determinant\_shift} multiplies by $\det(\Sigma)^{m_r}$ (see Lemma \ref{lemma:det_shift}).
\\
The final tail polynomial $F_{N-1}(CC^*)$ is obtained in the last recursion step and using \eqref{eq:super_long_f} we can compute the degree as in \eqref{eq:degree_formula_f_polynomial}.

\subsection{Example: $N=4$} We now turn to the computation of a concrete example. For illustration, we use the signature
\[
D=(D_0,D_1,D_2,D_3,D_4)=(1,2,2,2,1),
\qquad
d=(d_1,d_2,d_3,d_4)=(3,2,2,2).
\]
To keep track of relevant integers we stack them into the following table
\[
\begin{array}{c||c|c|c}
r & m_r & n_r & a_r\\
\hline
1 & 1 & 3 & 2\\
2 & 2 & 4 & 6\\
3 & 2 & 4 & 10
\end{array} 
\]
The basic usage of the \texttt{julia} package \texttt{TTVarietyDegree.jl} is
\[
\begin{array}{l}
\texttt{using TTVarietyDegree} \\
\texttt{D = [1,2,2,2,1]} \\
\texttt{d = [3,2,2,2]} \\
\texttt{P = compute\_tail\_polynomial(D,d)} \\
\texttt{fP = compute\_fP(P, D[end-1], d[end])} \\
\texttt{deg = degree\_TT\_variety(D,d; reduce=false)}.
\end{array}
\]
Here \texttt{reduce} checks if any rank restriction is trivially satisfied, e.g, whenever $D_1 = d_1$ or $D_{N-1} = d_N$, and simplifies the signature accordingly. Here the reduction would result in a shorter signature $D=[1, 2, 2,1]$ and $d=(3, 2, 4)$.
The projective dimension is computed by
\begin{equation*}
    \dim_{\C} V_{D,d}
=
\sum_{r=1}^{3} D_rn_r + (D_3d_4 
-1) - \sum_{r=1}^{3}D_r^2 = 13 \,.
\end{equation*}
The algorithm executes the recursion for the Schur expansion of $F_3$:
\paragraph{Step $r=1$}
We get $F_1(\Sigma)
=
\det(\Sigma)
=
s_{(1,1)}(\Sigma)$.

\paragraph{Step $r=2$}
Since $F_1(\Sigma)=s_{(1,1)}(\Sigma)$, the routine calls \texttt{compute\_H\_schur($\lambda=(1,1), D_1=2, D_2=2, d_2=2$)}. This yields
\begin{equation*}
    H_{(1,1)}^{(2)}(\Sigma)
=
\frac{1}{10}s_{(2)}(\Sigma)
+
\frac{1}{2}s_{(1,1)}(\Sigma).
\end{equation*}
Since \(m_2=2\), there is a determinantal shift by \((2,2)\) and
\[
F_2(\Sigma)
=
\frac{1}{10}s_{(4,2)}(\Sigma)
+
\frac{1}{2}s_{(3,3)}(\Sigma).
\]

\paragraph{Step $r=3$}
Now \(F_2\) has two Schur terms, so the code computes $H_{(4,2)}^{(3)}$ and $H_{(3,3)}^{(3)}$. The output of  \texttt{compute\_H\_schur($\lambda=(4,2), D_2=2, D_3=2, d_3=2$)} is
\[
H_{(4,2)}^{(3)}
=
\frac{1}{84}s_{(6)}
+
\frac{3}{28}s_{(5,1)}
+
\frac{3}{7}s_{(4,2)}
+
\frac{1}{5}s_{(3,3)},
\]
and \texttt{compute\_H\_schur($\lambda=(3,3), D_2=2, D_3=2, d_3=2$)} yields
\[
H_{(3,3)}^{(3)}
=
\frac{1}{420}s_{(6)}
+
\frac{3}{140}s_{(5,1)}
+
\frac{3}{35}s_{(4,2)}
+
\frac{1}{5}s_{(3,3)}.
\]
Therefore, averaging over $\St(D_3, n_3)$ gives
\begin{align*}
    \int_{\St(D_3, n_3)}\left(\frac{1}{10}s_{(4,2)}(L_{M^3}(\Sigma))
+
\frac{1}{2}s_{(3,3)}(L_{M^3}(\Sigma))\right)d\mu(M^3)
&=
\frac{1}{10}H_{(4,2)}^{(3)}
+
\frac{1}{2}H_{(3,3)}^{(3)} \\
&=
\frac{1}{420}s_{(6)}
+
\frac{3}{140}s_{(5,1)}
+
\frac{3}{35}s_{(4,2)}
+
\frac{3}{25}s_{(3,3)}.
\end{align*}
Finally, \(m_3=2\), so the determinant shifts  by \((2,2)\) and we get
\begin{align*}
    P_{D,d}(C)
=
F_3(CC^*) =
\frac{1}{420}s_{(8,2)}(CC^*)
+
\frac{3}{140}s_{(7,3)}(CC^*)
+
\frac{3}{35}s_{(6,4)}(CC^*)
+
\frac{3}{25}s_{(5,5)}(CC^*).
\end{align*}
Given the Schur expansion of the final tail factor, we compute $f(P_{D,d})$ by calling \texttt{compute\_fP($P_{D,d}, D_3, d_4$)}. Following Proposition \ref{prop:f_schur}, this is a combinatorial term evaluating to $f(P_{D,d})=79488$.
\\
Using $\deg(\operatorname{Gr}(2,3))=1$ and $\deg(\operatorname{Gr}(2,4))=2$ together with \eqref{eq:degree_formula_f_polynomial} we get
\begin{align*}
    \deg(V_{D,d}) = \frac{
\deg(\operatorname{Gr}(2,3))
\deg(\operatorname{Gr}(2,4))
\deg(\operatorname{Gr}(2,4))
}{
(2\cdot 1)!(2\cdot 2)!(2\cdot 2)!
}f(P_{D,d}) 
=
\frac{1\cdot 2\cdot 2}{2!\,4!\,4!}\cdot79488 = 276.
\end{align*}

\subsection{Comparison to independent numerical and symbolic methods}

For consistency, we compare the degrees obtained from \texttt{TTVarietyDegree.jl}
with degrees computed by two independent methods. 
\\

\noindent The first method is a
numerical monodromy computation implemented in
\texttt{HomotopyContinuation.jl} \cite{Breiding_2018}. The second method is
a symbolic computation in \texttt{Macaulay2} \cite{M2}, using standard
commutative algebra methods for computing degrees. \\

The table below contains the degrees for several signatures in the case \(N=4\). In all listed
cases, the degrees computed by our implementation
\texttt{TTVarietyDegree.jl} agree with the degrees obtained by the two
independent methods. These computations provide an external consistency
check for the formula in examples where the alternative methods are still
computationally feasible. For larger values of \(N\), the numerical and
symbolic computations become prohibitively expensive with reasonable
computational resources, while the recursive formula remains applicable (see also Table \ref{tab:degree_ex}).

\begin{table}[H]
\centering
\tiny
\resizebox{\textwidth}{!}{%
\begin{tabular}{c|ccccccc}
\toprule
\multicolumn{1}{c}{} & \multicolumn{7}{c}{\textbf{$d$}} \\
\cmidrule(rl){2-8}
\textbf{$D$}
& \((2,2,3,3)\)
& \((2,3,3,2)\)
& \((2,3,3,3)\)
& \((3,2,2,3)\)
& \((3,2,3,3)\)
& \((3,3,2,3)\)
& \((3,3,3,3)\) \\
\midrule
\((1,1,1,1,1)\)
& 180 & 180 & 630 & 180 & 630 & 630 & 2520 \\

\((1,1,1,2,1)\)
& 216 & 168 & 1080 & 168 & 1080 & 756 & 5940 \\

\((1,1,2,1,1)\)
& 168 & 216 & 1080 & 210 & 756 & 756 & 5940 \\

\((1,1,2,2,1)\)
& 210 & 210 & 4896 & 330 & 1575 & 4446 & 41616 \\

\((1,2,1,1,1)\)
& 210 & 168 & 756 & 168 & 756 & 1080 & 5940 \\

\((1,2,1,2,1)\)
& 360 & 252 & 2376 & 252 & 2376 & 2376 & 30888 \\

\((1,2,2,1,1)\)
& 330 & 210 & 1575 & 330 & 4446 & 1575 & 41616 \\

\((1,2,2,2,1)\)
& 2796 & 1764 & 73332 & 5166 & 87426 & 87426 & 3676860 \\
\bottomrule
\end{tabular}%
}
\caption{Degrees of \(V_{D,d}\) for several signatures with \(N=4\).
The displayed values were computed using \texttt{TTVarietyDegree.jl} and
independently checked using \texttt{HomotopyContinuation.jl} and
\texttt{Macaulay2}.}
\label{tab:degree_comparison_selected}
\end{table}

\printbibliography

\end{document}